\newcommand{\n}{\noindent}
\newcommand{\vp}{\varepsilon}
\newcommand{\bb}[1]{\mathbb{#1}}
\newcommand{\cl}[1]{\mathcal{#1}}
\newcommand{\ovl}{\overline}
\theoremstyle{plain}
\newtheorem{thm}{Theorem}[section]
\newtheorem{lem}[thm]{Lemma}
\newtheorem{pro}[thm]{Proposition}
\newtheorem{cor}[thm]{Corollary}
\theoremstyle{definition}
\newtheorem{dfn}[thm]{Definition}
\theoremstyle{remark}
\newtheorem{rem}[thm]{Remark}
\numberwithin{equation}{section}
\def\tilde{\widetilde}
\renewcommand{\tilde}{\widetilde}
\def\C{\bb  C}
\def\CC{\bb  C}
\def\F{\bb  F}
\def\d{\delta}
\def\NN{\bb  N}
\def\N{\bb  N}
\def\CC{\bb  C}
\def\CC{\bb  C}
\def\F{\bb  F}
\def\d{\delta}
\def\NN{\bb  N}
\def\Z{\bb  Z}
\def\CC{\bb  C}
\def\phi{\varphi}
\def\n{\noindent}
\def\nl{\nolimits}
\begin{document}

\title{On the    Lifting  Property for $C^*$-algebras}

\author{by\\
Gilles  Pisier\footnote{ORCID    0000-0002-3091-2049}   \\
Texas  A\&M  University\\
College  Station,  TX  77843,  U.  S.  A.}

\def\C{\mathscr{C}}
\def\B{\mathscr{B}}
\def\I{\cl  I}
\def\e{\cl  E}
 
  \def\a{\alpha}
 
  \maketitle
\begin{abstract}    
We  characterize the lifting property (LP) of a separable
$C^*$-algebra $A$
by a property of its maximal tensor
product with other $C^*$-algebras, namely we prove that $A$
has the LP if and only if for any family $\{D_i\mid i\in I\}$ of
$C^*$-algebras
the canonical map
$$   {\ell_\infty(\{D_i\}) \otimes_{\max} A}\to {\ell_\infty(\{D_i \otimes_{\max} A\}) }$$
is isometric. 
Equivalently, this holds
if and only if
$M \otimes_{\max} A= M \otimes_{\rm nor} A$ for any von Neumann algebra $M$.
 \end{abstract}

  \medskip{MSC (2010): 46L06, 46L07, 46L09} 
  
  Key words: C*-algebras, von Neumann algebras, lifting property, tensor products
  \tableofcontents
  
    \vfill \eject
    A separable $C^*$-algebra $A$ has the lifting property (LP in short)
    if any contractive completely positive (c.c.p. in short)  map $u: A \to C/\cl I$ into a quotient $C^*$-algebra
    admits a c.c.p. lifting $\hat u: A \to C$. 
    In \cite{[CE5]} Choi and Effros proved that all (separable) nuclear $C^*$-algebras have the LP.
    Later on in \cite{Kiuhf} Kirchberg proved that the full $C^*$-algebra
    of the free group $\F_n$ with $n>1$ (or $n=\infty$) generators,
    which is notoriously non-nuclear, also has the LP. 
    It follows that separable unital $C^*$-algebras with the LP
    are just the quotients of $C^*(\F_\infty)$ for which the quotient map
    admits a unital completely positive (u.c.p. in short)  lifting.
    More generally, as observed by Boca in \cite{Boca2}, it follows from the latter fact that 
    the LP is stable by unital (maximal) free products. Indeed, 
   it is an immediate consequence of  Boca's theorem in \cite{Boca}  (see
    \cite{DK} for a recent simpler proof)  that the free product of
    a family of unital $*$-homomorphisms that are liftable by u.c.p.
    maps is also liftable by a u.c.p. map.
    However it is well known that the reduced
    $C^*$-algebra of  $\F_n$ fails the LP. 
    
    Our main result is a 
    very simple (functorial) characterization of the LP in terms of maximal tensor products
    (namely \eqref{pty''} below) that seems to have been overlooked by previous authors.
    In \cite{Kir} Kirchberg 
    gave a tensor product characterizarion of the local version of the LP 
    called the LLP. He showed that a $C^*$-algebra $A$ has the LLP
    if and only if $B(\ell_2)  \otimes_{\max} A= B(\ell_2)  \otimes_{\min} A$.
    He then went on to conjecture that for separable
    $C^*$-algebras the LLP implies the LP, and he showed that a negative solution 
  would  imply a negative answer  for the Connes embedding 
    problem (see Remark \ref{glolo} for more information).
    We hope that our new criterion for the LP will help
    to answer this question whether the LLP implies the LP.
    More specifically, we believe that a modification of the construction in \cite{P7}    might lead to a counterexample.
    
 Let   $(D_i)_{i\in I}$ be a   family 
 of $C^*$-algebras. We will denote by $\ell_\infty(\{D_i\mid \in I\})$ or simply by
 $\ell_\infty(\{D_i\})$ the $C^*$-algebra formed of   the   families
    $d=(d_i)_{i\in I}$ in $\prod_{i\in I} D_i$ such that $\sup\nl_{i\in I} \|d_i\|<\infty$, equipped
    with the norm $d\mapsto \sup\nl_{i\in I}\|d_i\|$.
    \\{Consider the following property of a } $C^*${-algebra }A :
 \begin{equation}\label{pty}
 \text{For any family }(D_i)_{i\in I}
\text{ of }C^*\text{-algebras 
 and any }t\in \ell_\infty(\{D_i\}) \otimes A  
\text{ we have }
\end{equation}
 \begin{equation}\label{pty'}
 \|t\|_{\ell_\infty(\{D_i\}) \otimes_{\max} A} \le \sup_{i\in I} \|t_i\|_{ D_i \otimes_{\max} A} ,  \end{equation}
 where $t_i= (p_i\otimes Id_A)(t)$ with $p_i: \ell_\infty(\{D_i\}) \to D_i$ denoting the $i$-th coordinate projection.\\
Of course, for any $i\in I$ we have $\|p_i\otimes Id_A: {\ell_\infty(\{D_i\}) \otimes_{\max} A} \to D_i \otimes_{\max} A\|\le 1$, and hence we have a natural
contractive $*$-homomorphism
 \begin{equation}\label{12/11}
   {\ell_\infty(\{D_i\}) \otimes_{\max} A}\to {\ell_\infty(\{D_i \otimes_{\max} A\}) } .\end{equation}
Thus \eqref{pty'} means that
 we have a natural isometric embedding
   \begin{equation}\label{pty''}
   {\ell_\infty(\{D_i\}) \otimes_{\max} A}\subset {\ell_\infty(\{D_i \otimes_{\max} A\}) } .\end{equation}
 More precisely ${\ell_\infty(\{D_i\}) \otimes_{\max} A}$ can be identified with the closure
 of ${\ell_\infty(\{D_i\}) \otimes  A}$ (algebraic tensor product)
 in ${\ell_\infty(\{D_i \otimes_{\max} A\}) } $.

Let us denote
$$\C= C^*(\F_\infty),$$
the full (or ``maximal") $C^*$-algebra of the free group $\F_\infty$
with countably infinitely many generators.

 Using the description of the norm in $D \otimes_{\max} \ell_1^n   $ (for an arbitrary $C^*$-algebra $D$), 
 with $\ell_1^n   \subset \C$ identified as usual with the span of $n$ free generators,
 it is easy to check that $\C$ has this property \eqref{pty} (see 
 Lemma \ref{lpty} below).
 In fact, as a consequence,  any unital separable $A$ with LP has this property.
 Our main result is that conversely this characterizes the LP.
 
 The fact that \eqref{pty} implies the LP contains many   previously known lifting theorems, for instance the Choi-Effros one.
 It also gives a new proof of the LP for  $C^*(\F_\infty)$.

 The key step will be  a new form of the reflexivity principle.
  Consider for $E\subset A$ finite dimensional  
   the normed space $MB(E,C)$
 defined below in \S \ref{var} as   formed of the $\max\to\max$-tensorizing maps into another $C^*$-algebra $C$.
 We will show that if (and only if) $A$ has the property \eqref{pty} then the  natural map
 $$MB(E,C^{**}) \subset
MB(E,C)^{**}$$ is contractive for any  finite dimensional $E\subset A$.
As a consequence it follows that any $u\in MB(E,C)$
admits an extension in $MB(A,C)$ with the same $MB$-norm up to $\vp>0$.
From this extension property
we deduce a lifting one:
if  $A$ assumed unital and separable satisfies \eqref{pty},   any unital c.p. map $u: A \to C/\cl I$ ($C/\cl I$ any quotient)
  admits a unital c.p. lifting.   In other words
  this says that \eqref{pty} implies the LP.

  \begin{rem} Let $(D_i)_{i\in I}$  be a family  
 of $C^*$-algebras. 
 Consider the algebraic tensor product
 $\ell_\infty(\{D_i\}) \otimes A  $.
We will use the natural embedding
  $$\ell_\infty(\{D_i\}) \otimes A  \subset  \prod\nl_{i\in I} (D_i \otimes A).$$
  Any $t\in \ell_\infty(\{D_i\}) \otimes A  $
  can be identified with a bounded family
$(t_i)$ with $t_i\in D_i \otimes E$ for some f.d. subspace $E\subset A$.
 Thus the correspondence $t\mapsto (t_i)$
gives us a canonical {\it linear }
embedding  
  \begin{equation}  \label{13/8} \ell_\infty(\{D_i\}) \otimes A   \subset \ell_\infty(\{D_i\otimes_{\max} A  \}).\end{equation} 
  The property in   \eqref{pty} is equivalent
to the assertion that this map
  is an isometric embedding
  when  $\ell_\infty(\{D_i\}) \otimes A $ is equipped with the maximal 
  $C^*$-norm.
  \\
  More precisely, for any f.d. subspace $E\subset A$
we have a canonical linear isomorphism
$$\ell_\infty(\{D_i\}) \otimes E \simeq \ell_\infty(\{D_i \otimes E \}).$$
By convention, for any $C^*$-algebra $D$ let us denote by $D\otimes_{\max} E$
the normed space obtained by equipping $D\otimes E$ with the norm induced on it
by $D \otimes_{\max} A$.
Then the property in \eqref{pty} is equivalent
to the assertion that for any f.d. $E\subset A$
we have an isometric isomorphism
 \begin{equation}  \label{7/9} \ell_\infty(\{D_i\}) \otimes_{\max} E \simeq \ell_\infty(\{D_i \otimes_{\max} E \}).\end{equation}
\end{rem}

  \n{\bf Notation.} Let $D,A$ be $C^*$-algebras and let $E\subset A$ be a subspace.
  We will denote
  $$(D\otimes E)^+_{\max} =(D\otimes_{\max} A)_+ \cap (D\otimes E) 
   $$
   
  \begin{thm}\label{L1} Let $A$ be a separable $C^*$-algebra.
  The following are equivalent:\\
 \item{\rm (i)} The algebra  $A$ has the lifting property (LP).\\
  \item{\rm (ii)} For any family $(D_i)_{i\in I}$ 
 of $C^*$-algebras
 and any $t\in \ell_\infty(\{D_i\}) \otimes A  $
 we have
 $$\|t\|_{\ell_\infty(\{D_i\}) \otimes_{\max} A} \le \sup_{i\in I} \|t_i\|_{ D_i \otimes_{\max} A}  .$$
 In other words, the natural $*$-homomorphism \eqref{12/11}
 is isometric.\\
 \item{\rm (ii)}$_+$ For any family $(D_i)_{i\in I}$ 
 of $C^*$-algebras and any $t\in \ell_\infty(\{D_i\}) \otimes A$,
 the following implication holds
 $$\forall i\in I \ t_i \in (D_i \otimes A) ^+_{\max}
 \Rightarrow t\in  (\ell_\infty(\{D_i\}) \otimes A) ^+_{\max} .$$
 \end{thm}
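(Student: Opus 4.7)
I would close the loop (i) $\Leftrightarrow$ (ii) around the reflexivity principle announced in the introduction, and handle (ii) $\Leftrightarrow$ (ii)$_+$ by a $2\times 2$-matrix argument. The direction (i) $\Rightarrow$ (ii) is routine: a separable LP algebra is a c.c.p.-liftable quotient of $\C=C^*(\F_\infty)$, and $\C$ itself already satisfies \eqref{pty} by Lemma \ref{lpty}, so one just transports the property across the quotient. The converse (ii) $\Rightarrow$ (i) is where the content lies.

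\textbf{Proof of (i) $\Rightarrow$ (ii).} Write $A=\C/\cl J$ and let $\sigma:A\to \C$ be a c.c.p.\ section of the quotient $*$-homomorphism $\pi:\C\to A$. For $t\in\ell_\infty(\{D_i\})\otimes A$, set $\hat t:=(\mathrm{Id}\otimes\sigma)(t)\in\ell_\infty(\{D_i\})\otimes\C$, so that $(\mathrm{Id}\otimes\pi)(\hat t)=t$. For each $i$, one has $\hat t_i=(\mathrm{Id}_{D_i}\otimes\sigma)(t_i)$, and sandwiching between the max-contractive $\mathrm{Id}_{D_i}\otimes\sigma$ and the max-quotient $\mathrm{Id}_{D_i}\otimes\pi$ gives
$$\|\hat t_i\|_{D_i\otimes_{\max}\C}=\|t_i\|_{D_i\otimes_{\max} A}.$$
Applying Lemma \ref{lpty} to $\C$ yields $\|\hat t\|_{\ell_\infty(\{D_i\})\otimes_{\max}\C}\le \sup_i\|\hat t_i\|_{D_i\otimes_{\max}\C}$, and contracting again by $\mathrm{Id}\otimes\pi$ gives the desired bound on $\|t\|_{\ell_\infty(\{D_i\})\otimes_{\max} A}$.

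\textbf{Proof of (ii) $\Rightarrow$ (i).} Following the outline from the introduction, the central step is the \emph{reflexivity principle}: for every f.d.\ $E\subset A$ and every target $C^*$-algebra $C$, the natural map $MB(E,C^{**})\to MB(E,C)^{**}$ is contractive. I would derive this from \eqref{pty} by realizing elements of $MB(E,C^{**})$ as weak$^*$-limits of maps into $C$, taking the family $(D_i)$ to range over the finite ``slices'' of $C^{**}$, and using \eqref{pty} to keep the $MB$-norm under control in the limit. A Hahn--Banach / Goldstine-type argument then converts this into the extension property: every $u\in MB(E,C)$ admits an extension $\tilde u\in MB(A,C)$ with $\|\tilde u\|_{MB}\le\|u\|_{MB}+\vp$. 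To conclude the LP, given a u.c.p.\ map $u:A\to C/\cl I$, Arveson/Stinespring supplies u.c.p.\ lifts on an exhausting sequence $E_n\uparrow A$; each such lift is extended to $A$ via the extension property, and the resulting sequence of global maps is assembled into a c.c.p.\ lift by a point-norm compactness argument that exploits the separability of $A$. The main obstacle is the reflexivity principle itself: the ``horizontal'' $\ell_\infty$-product statement \eqref{pty} has to be converted into a ``vertical'' duality statement about maps into the bidual, and it is here that I expect the most work.

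\textbf{Equivalence (ii) $\Leftrightarrow$ (ii)$_+$.} For (ii)$_+$ $\Rightarrow$ (ii), given $t$ with $\sup_i\|t_i\|_{D_i\otimes_{\max} A}\le 1$, the $2\times 2$ matrices $T_i=\bigl(\begin{smallmatrix}1 & t_i\\ t_i^* & 1\end{smallmatrix}\bigr)$ are positive in $M_2(D_i)\otimes_{\max} A$; applying (ii)$_+$ to the family $(T_i)$, which lives in $\ell_\infty(\{M_2(D_i)\})\otimes A$, gives positivity of the corresponding matrix $T$ in $\ell_\infty(\{M_2(D_i)\})\otimes_{\max} A$, and by nuclearity of $M_2$ this space coincides with $M_2(\ell_\infty(\{D_i\})\otimes_{\max} A)$, so $\|t\|_{\ell_\infty(\{D_i\})\otimes_{\max} A}\le 1$. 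Conversely, assuming (ii) with $t_i\ge 0$ and $r:=\sup_i\|t_i\|_{D_i\otimes_{\max} A}$, the inequality of (ii) applied to $t-t^*$ forces $t=t^*$, and applied to $r\cdot 1-t$ gives $\|r\cdot 1-t\|_{\ell_\infty(\{D_i\})\otimes_{\max} A}\le\sup_i\|r\cdot 1-t_i\|_{D_i\otimes_{\max} A}\le r$, so that $t\ge 0$.
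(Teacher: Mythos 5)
Your treatment of (i) $\Rightarrow$ (ii) and of (ii) $\Leftrightarrow$ (ii)$_+$ is correct and matches the paper: the first is the same transport of Lemma \ref{lpty} across a c.c.p.\ section of $\C\to A$, and for (ii)$_+$ $\Rightarrow$ (ii) you use the $2\times 2$-matrix route that the paper itself offers as an alternative to its primary argument via the fact that a self-adjoint contraction $x$ satisfies $x\in C_+$ iff $\|1-x\|\le 1$.

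The direction (ii) $\Rightarrow$ (i) is where the content lies, and there your proposal has a genuine gap. You correctly identify the architecture (reflexivity principle $MB(E,C^{**})\to MB(E,C)^{**}$ contractive, then extension, then Arveson), but the reflexivity principle --- which you yourself flag as ``the main obstacle'' --- is left unproved, and the hint you give (letting $(D_i)$ range over ``finite slices of $C^{**}$'') does not point at a working argument. The paper proves it by a bipolar computation: one identifies the unit ball of $MB(E,C)^*$ with the set $K$ of functionals $u\mapsto\langle w,[Id_D\otimes u](t)\rangle$ with $w\in B_{(D\otimes_{\max}C)^*}$ and $t\in B_{D\otimes_{\max}E}$, and the crucial point --- the only place \eqref{pty} enters --- is that $K$ is weak*-closed, obtained by assembling the algebras $D_i$ witnessing a net in $K$ into $D=\ell_\infty(\{D_i\})$ and invoking \eqref{pty'} to keep the assembled tensor $(t_i)$ in the unit ball of $D\otimes_{\max}E$. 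Without some version of this, the implication does not get off the ground.

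Two further steps in your sketch are off. First, ``Arveson/Stinespring supplies u.c.p.\ lifts on an exhausting sequence $E_n\uparrow A$'' is not true: liftability of $u_{|E_n}$ on f.d.\ subspaces is precisely the kind of conclusion being established, and in the paper these local lifts come from the reflexivity principle itself, fed into Theorem \ref{9/10} (Arveson's quasicentral-approximate-unit principle applied to the admissible class of sb/mb-contractions), not from Stinespring. Second, an mb-contractive lift of a u.c.p.\ map need not be completely positive; to land in the class of u.c.p.\ maps the paper needs the self-adjointization and perturbation lemmas (Lemma \ref{l27/11} and Lemma \ref{v26}), which correct a unital map of mb-norm close to $1$ into a genuinely unital maximally positive one, together with Remark \ref{rw1}. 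Your proposal skips this positivity bookkeeping entirely.
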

We   prove Theorem \ref{L1} in \S \ref{pf}.
 
  \begin{rem}[On the nuclear case] 
   If one replaces everywhere $\max$ by $\min$ in (ii) in Theorem \ref{L1},
   then the property clearly holds for \emph{all} $C^*$-algebras.
   Thus Theorem \ref{L1} implies as a corollary the Choi-Effros lifting theorem from \cite{[CE5]},
   which asserts that nuclear $C^*$-algebras have the LP.
    \end{rem}
 \begin{rem} It is easy to see that
 if $A$ is the direct sum of finitely many $C^*$-algebras satisfying
  \eqref{pty} then $A$ also does.   
 \end{rem}
  \begin{rem}\label{rty0} Let $A$ be a $C^*$-algebra.
  It is classical (see e.g. \cite[Prop. 7.19]{P6}) that for any (self-adjoint, closed and two sided) ideal $\cl I\subset A$ and any $C^*$-algebra $D$
  we have an isometric embedding $D \otimes_{\max} \cl I
  \subset   D \otimes_{\max} A$. 
 This shows that if $A$ satisfies \eqref{pty} then 
 any ideal $\cl I$ in $A$ also does. \\Similarly, for any ideal
 $\cl I\subset D$ we have an isometric embedding $ \cl I \otimes_{\max} A
  \subset   D \otimes_{\max} A$. 
 Thus if  \eqref{pty} holds for a given family $(D_i)$ it also holds for
 any family $(\cl I_i)$  where each $\cl I_i$ is an ideal in $D_i$.
  Since any $D$ is an ideal in its unitization,
  one deduces from this that if \eqref{pty} holds for any family $(D_i)$
  of \emph{unital} $C^*$-algebras, then it holds for any family.
  One also sees that $A$ satisfies \eqref{pty} if and only if its unitization satisfies it.
 \end{rem}
 \begin{rem}\label{rty}
 We will use the fact due to Kirchberg \cite{Kir} that 
 for any $t\in D_i \otimes A$ there is a separable  $C^*$-subalgebra
 $\Delta_i\subset D_i$ such that $t\in \Delta_i \otimes A$ and 
 $$\|t\|_{\Delta_i \otimes_{\max} A} =\|t\|_{D_i \otimes_{\max} A} .$$
Indeed,  by \cite[Lemma 7.23]{P6} 
  for any $\vp>0$ there is a separable   
 $D^\vp_i \subset D_i$  such that $t\in D^\vp_i \otimes A$ and 
 $\|t\|_{D^\vp_i  \otimes_{\max} A} \le (1+\vp) \|t\|_{D_i \otimes_{\max} A} $.
 This implies that the $C^*$-algebra $\Delta_i$ generated by
 $\{D^\vp_i \mid \vp=1/n, n\ge 1 \}$ has the announced property.
 Using this fact in
  the property \eqref{pty} we may assume that all
  the $D_i$'s are separable (and unital by the preceding remark).
   \end{rem}
  \begin{rem}\label{rtyb}
 Let $C/ \cl I$ be a quotient $C^*$-algebra and let $A$ be another $C^*$-algebra.
 It is well known (see e.g. \cite[Prop. 7.15]{P6})
 that
   \begin{equation}  \label{25/8}
   (C/ \cl I)   \otimes_{\max} A= (C    \otimes_{\max} A )/    (    \cl I \otimes_{\max} A) .\end{equation}
   Equivalently, $A \otimes_{\max} (C/ \cl I)  = (A   \otimes_{\max} C)/    (   A \otimes_{\max}  \cl I)$.\\
   Moreover, for any f.d. subspace $E\subset A$ we have
   (for a detailed proof see e.g. \cite[Lem. 4.26]{P6})
    \begin{equation}  \label{7/9/1}
   (C/ \cl I)   \otimes_{\max} E= (C    \otimes_{\max} E )/    (    \cl I \otimes_{\max} E) .\end{equation}
   Moreover, the closed unit ball of $(C    \otimes_{\max} E )/    (    \cl I \otimes_{\max} E) $
   coincides with the image under the quotient map
   of the closed unit ball of  $ C    \otimes_{\max} E $. This known fact can be checked just like for the min-norm
   in \cite[Lem. 7.44]{P6}.\\
 Since any separable unital $C^*$-algebra $D$ can be viewed as a quotient of $\C$,
 so that say $D= \C/ \cl I$ we have an isomorphism
   \begin{equation}  \label{25/8/4} D   \otimes_{\max} A= (\C    \otimes_{\max} A )/    (    \cl I \otimes_{\max} A) .\end{equation}
   \end{rem}
  Using   the Remarks   \ref{rty} and \ref{rtyb} one  obtains:
     
    \begin{pro}\label{rty1} To verify the property \eqref{pty} we may assume without loss of generality that
  $D_i=\C$ for any $i\in I$. 
   \end{pro}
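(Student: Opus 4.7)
The plan is to lift a general family $(D_i)_{i\in I}$ to a constant family where every factor equals $\C$ via the canonical quotient maps. By Remarks \ref{rty} and \ref{rty0}, we may assume each $D_i$ is separable and unital, hence $D_i=\C/\cl I_i$ via a surjective $*$-homomorphism $\pi_i:\C\to D_i$; these assemble coordinatewise into a contractive $*$-homomorphism $\pi:\ell_\infty(\{\C\})\to \ell_\infty(\{D_i\})$.

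Fix $t\in \ell_\infty(\{D_i\})\otimes A$ and set $s=\sup_{i\in I}\|t_i\|_{D_i\otimes_{\max} A}$. Choose a finite-dimensional $E\subset A$ with $t\in \ell_\infty(\{D_i\})\otimes E$, a basis $\{a_k\}_{k=1}^d$ of $E$, and extensions $\phi_k\in A^*$ of its dual basis. By \eqref{7/9/1} together with the closed-unit-ball assertion in Remark \ref{rtyb}, for each $i$ we can lift $t_i\in D_i\otimes E$ to some $\tilde t_i\in \C\otimes E$ with $(\pi_i\otimes Id_E)(\tilde t_i)=t_i$ and $\|\tilde t_i\|_{\C\otimes_{\max} A}\le s$. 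Writing $\tilde t_i=\sum_k \tilde d^{(k)}_i\otimes a_k$, one has $\tilde d^{(k)}_i=(Id_\C\otimes \phi_k)(\tilde t_i)$; and since a bounded functional on a $C^*$-algebra induces a bounded slice map on the maximal tensor product (via the Jordan decomposition of $\phi_k$ into a linear combination of states), the norms $\|\tilde d^{(k)}_i\|_\C$ are uniformly bounded in $i$ for each $k$. Thus $\tilde d^{(k)}:=(\tilde d^{(k)}_i)_{i\in I}\in \ell_\infty(\{\C\})$, and $\tilde t:=\sum_k \tilde d^{(k)}\otimes a_k \in \ell_\infty(\{\C\})\otimes E$ is a common lift, satisfying $(p_i\otimes Id_A)(\tilde t)=\tilde t_i$ and $(\pi\otimes Id_A)(\tilde t)=t$.

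Applying the hypothetical case of \eqref{pty} with every factor equal to $\C$ to the element $\tilde t$, we obtain $\|\tilde t\|_{\ell_\infty(\{\C\})\otimes_{\max} A}\le \sup_i\|\tilde t_i\|_{\C\otimes_{\max} A}\le s$. Since $\pi\otimes Id_A$ is a contractive $*$-homomorphism for the maximal norm, this forces $\|t\|_{\ell_\infty(\{D_i\})\otimes_{\max} A}\le \|\tilde t\|_{\ell_\infty(\{\C\})\otimes_{\max} A}\le s$, which is precisely \eqref{pty} for the original family. The main obstacle is the assembly step: the pointwise lifts $\tilde t_i$ must fit together as one element of $\ell_\infty(\{\C\})\otimes A$, and this requires the uniform bound on the basis coefficients $\tilde d^{(k)}_i$ provided by the boundedness of slice maps on $\C\otimes_{\max} A$, a feature specific to the maximal tensor product that makes the reduction to the universal algebra $\C$ possible.
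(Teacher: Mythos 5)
Your proof is correct and is precisely the argument the paper intends: the paper gives no details, simply asserting that the proposition follows from Remarks \ref{rty} and \ref{rtyb}, and your route --- reduce to separable unital $D_i$, write $D_i=\C/\cl I_i$, lift each $t_i$ with no loss of norm using \eqref{7/9/1} and the closed-unit-ball assertion, assemble, apply the constant-$\C$ case, and push down along the contractive $*$-homomorphism $\pi\otimes Id_A$ --- is exactly that reduction. Your explicit verification via slice maps that the coordinatewise lifts have uniformly bounded coefficients, so that they genuinely assemble into an element of $\ell_\infty(\{\C\})\otimes E$, is a detail the paper leaves implicit and is a welcome addition.
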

   
     \begin{lem}\label{ety}  Let $(D_i)_{i\in I}$ be a family of $C^*$-algebras.
     Then for any $C^*$-algebra $C$ we have  natural isometric embeddings
     \begin{equation}\label{30/8/1}  \ell_\infty(\{D_i \otimes_{\max} C\}) \subset  \ell_\infty(\{D^{**}_i\  \otimes_{\max} C\}).\end{equation}
      \begin{equation}  \label{30/8}\ell_\infty(\{D_i\}) \otimes_{\max} C \subset \ell_\infty(\{D^{**}_i\}) \otimes_{\max} C.\end{equation}
        \end{lem}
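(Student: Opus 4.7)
The plan is to derive both embeddings from the classical ``bidual stability'' fact: for any $C^*$-algebras $D$ and $C$, the inclusion $D \otimes_{\max} C \subset D^{**} \otimes_{\max} C$ is isometric. The proof of this foundational fact is short: given commuting $*$-representations $\pi_D : D \to B(H)$ and $\pi_C : C \to B(H)$, the normal extension $\tilde\pi_D : D^{**} \to \pi_D(D)'' \subset B(H)$ still commutes with $\pi_C(C) \subset \pi_D(D)'$, producing a $*$-representation of $D^{**} \otimes_{\max} C$ on $H$ that restricts to the original on $D \otimes_{\max} C$.

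For \eqref{30/8/1}, I apply this coordinate-wise: each inclusion $D_i \otimes_{\max} C \subset D_i^{**} \otimes_{\max} C$ is isometric, so any bounded family $(t_i)$ on the left has the same sup-norm when viewed coordinate-wise on the right, which is exactly \eqref{30/8/1}.

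For \eqref{30/8}, the crucial identification is $\ell_\infty(\{D_i^{**}\}) \cong c_0(\{D_i\})^{**}$, obtained from the Banach space duality $c_0(\{D_i\})^{*} = \ell_1(\{D_i^{*}\})$. Since $c_0(\{D_i\})$ is a closed two-sided ideal of $\ell_\infty(\{D_i\})$, this realizes $\ell_\infty(\{D_i^{**}\})$ as a $C^*$-direct summand of $\ell_\infty(\{D_i\})^{**}$, cut out by a central projection $z_0$. Applying bidual stability to $\ell_\infty(\{D_i\}) \subset \ell_\infty(\{D_i\})^{**}$ yields an isometric embedding at the max tensor product level, and the $C^*$-direct sum decomposition
$$\ell_\infty(\{D_i\})^{**} \otimes_{\max} C \cong \bigl(\ell_\infty(\{D_i^{**}\}) \otimes_{\max} C\bigr) \oplus \bigl((1-z_0)\ell_\infty(\{D_i\})^{**} \otimes_{\max} C\bigr),$$
which distributes since $z_0$ is central, reduces \eqref{30/8} to showing the projection onto the first summand is norm-preserving for elements of the algebraic tensor product $\ell_\infty(\{D_i\}) \otimes C$.

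The main obstacle is this norm-preservation. The image of $t \in \ell_\infty(\{D_i\}) \otimes C$ in the complementary $(1-z_0)$-summand, naturally identified with $(\ell_\infty(\{D_i\})/c_0(\{D_i\}))^{**} \otimes_{\max} C$ via bidual stability, corresponds to the ``tail'' contribution of $t$. I would establish the required domination by a representation-theoretic argument: given a $*$-rep $\pi = \pi_D \cdot \pi_C$ of $\ell_\infty(\{D_i\}) \otimes_{\max} C$ on $H$ approximately achieving the max norm on $t$, the restriction $\pi_D|_{c_0(\{D_i\})}$ extends normally to $\ell_\infty(\{D_i^{**}\}) = c_0(\{D_i\})^{**}$ on the cyclic subspace $H_1 = \overline{\pi_D(c_0(\{D_i\}))H}$. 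The complementary quotient representation on $H_2 = H \ominus H_1$ factors through the isometric $C^*$-subalgebra inclusion $\ell_\infty(\{D_i\})/c_0(\{D_i\}) \hookrightarrow \ell_\infty(\{D_i^{**}\})/c_0(\{D_i^{**}\})$ and, after a suitable dilation, extends to a $*$-rep of $\ell_\infty(\{D_i^{**}\})/c_0(\{D_i^{**}\})$. The combined $*$-rep of $\ell_\infty(\{D_i^{**}\}) \otimes_{\max} C$ on $H$ then still commutes with $\pi_C$ and yields the required inequality $\|t\|_{\ell_\infty(\{D_i^{**}\}) \otimes_{\max} C} \geq \|\pi(t)\|$, completing the proof of \eqref{30/8}.
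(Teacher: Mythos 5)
Your treatment of \eqref{30/8/1} and all of the structural preliminaries for \eqref{30/8} (the identification $\ell_\infty(\{D_i^{**}\})\cong c_0(\{D_i\})^{**}$, the central projection $z_0$, bidual stability for $\ell_\infty(\{D_i\})$, the direct-sum splitting of the max tensor product) are correct, and you correctly isolate the real issue: one must show that compressing to the $z_0$-summand does not decrease the max norm of an element of $\ell_\infty(\{D_i\})\otimes C$. But your argument for that step has a genuine gap, located exactly where you write ``after a suitable dilation''. Decomposing $H=H_1\oplus H_2$ with $H_1=\overline{\pi_D(c_0(\{D_i\}))H}$ is fine, and the $H_1$ part works: the normal extension of $\pi_D|_{c_0}$ to $c_0^{**}=\ell_\infty(\{D_i^{**}\})$ commutes with $\pi_C$ and restricts correctly, giving $\|\pi(t)|_{H_1}\|\le\|t\|_{\ell_\infty(\{D_i^{**}\})\otimes_{\max}C}$. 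On $H_2$, however, $\pi_D$ factors through $Q=\ell_\infty(\{D_i\})/c_0(\{D_i\})$, and what you need is a pair of \emph{commuting} representations of $Q'=\ell_\infty(\{D_i^{**}\})/c_0(\{D_i^{**}\})$ and of $C$ whose compression to $H_2$ recovers $(\pi_D|_{H_2},\pi_C|_{H_2})$. A Stinespring-type dilation does extend the representation of the subalgebra $Q\subset Q'$, but there is no reason the dilated representation still commutes with (an extension of) $\pi_C$; producing such a commuting dilation is \emph{equivalent} to the assertion that $Q\otimes_{\max}C\to Q'\otimes_{\max}C$ is isometric, which is a statement of exactly the same nature as \eqref{30/8} itself (indeed a priori harder, as the inclusion $Q\subset Q'$ is less tractable than $\ell_\infty(\{D_i\})\subset\ell_\infty(\{D_i^{**}\})$). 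So the argument is circular at its crucial point.

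This gap is not cosmetic: the paper's own proof of \eqref{30/8} does not proceed by soft representation theory at all. It first reduces to $C=\C$ (via Remark \ref{rtyb} and the fact that max-isometry relative to $\C$ implies max-isometry relative to every $C$), and then invokes the property \eqref{pty} of $\C=C^*(\F_\infty)$ --- i.e.\ Kirchberg's lifting property, established elementarily in Lemma \ref{lpty} through the explicit description of the max norm on the span of free generators --- to sandwich both $\ell_\infty(\{D_i\})\otimes_{\max}\C$ and $\ell_\infty(\{D_i^{**}\})\otimes_{\max}\C$ isometrically inside $\ell_\infty(\{D_i\otimes_{\max}\C\})$ and $\ell_\infty(\{D_i^{**}\otimes_{\max}\C\})$ respectively, after which \eqref{30/8} follows from \eqref{30/8/1}. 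The absence of any such nontrivial input in your proposal is the symptom of the missing step: to repair your argument you would either have to supply the commuting dilation for the quotient inclusion $Q\subset Q'$ (which I do not believe can be done directly) or reintroduce the reduction to $\C$ and its property \eqref{pty}.
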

        
   \begin{proof}  
  We will use the classical fact that   \eqref{30/8} (or \eqref{30/8/1}) holds when $(D_i)_{i\in I}$ is reduced to a single
  element (see Remark \ref{ety'} or \cite[Prop. 7.26]{P6}), which means that we have for each $i\in I$ 
  a natural isometric embedding
  $D_i \otimes_{\max} C \subset  D^{**}_i\  \otimes_{\max} C$.
  From this \eqref{30/8/1} is immediate.\\
  To check \eqref{30/8}, since any unital separable $C$ is a quotient of $\C$, we may assume by Remark \ref{rtyb}
  that  $C=\C$ (see \cite[Th. 7.29]{P6} for details).
  Now  since $\C$ satisfies   \eqref{pty} (or equivalently the LP) we have isometric embeddings
  $$\ell_\infty(\{D _i\}) \otimes_{\max} \C \subset  \ell_\infty(\{D_i \otimes_{\max} \C\}) \text{  and  }
  \ell_\infty(\{D^{**}_i\}) \otimes_{\max} \C \subset \ell_\infty(\{D^{**}_i \otimes_{\max} \C\}) .$$ 
  Thus   \eqref{30/8} follows from \eqref{30/8/1} for $C=\C$, and hence in general.
 \end{proof}
      \begin{pro} To verify the property \eqref{pty} we may assume without loss of generality 
      that
  $D_i$ is  the bidual of a $C^*$-algebra for any $i\in I$. A fortiori we may assume 
   that
  $D_i$ is a von Neumann algebra for any $i\in I$.
   \end{pro}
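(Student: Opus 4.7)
The plan is to derive the reduction directly from Lemma \ref{ety}, since that lemma already packages both of the isometric embeddings one needs: at the level of the bounded-family algebra and at the level of a single algebra.

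Concretely, I would argue as follows. Suppose we have verified property \eqref{pty} for every family whose members are biduals of $C^*$-algebras. Let $(D_i)_{i\in I}$ be an arbitrary family of $C^*$-algebras, let $A$ be the algebra in question, and let $t\in \ell_\infty(\{D_i\})\otimes A$. By the inclusion \eqref{30/8} of Lemma \ref{ety}, the element $t$, viewed inside $\ell_\infty(\{D_i^{**}\})\otimes_{\max} A$ through the natural inclusion $\ell_\infty(\{D_i\})\subset \ell_\infty(\{D_i^{**}\})$, satisfies
$$\|t\|_{\ell_\infty(\{D_i\})\otimes_{\max} A}=\|t\|_{\ell_\infty(\{D_i^{**}\})\otimes_{\max} A}.$$
Applying the assumed form of \eqref{pty} to the family of biduals $(D_i^{**})_{i\in I}$ yields
$$\|t\|_{\ell_\infty(\{D_i^{**}\})\otimes_{\max} A}\le \sup_{i\in I}\|t_i\|_{D_i^{**}\otimes_{\max} A}.$$
Finally, the single-algebra case of the bidual embedding (the classical fact used in the proof of Lemma \ref{ety}, namely $D_i\otimes_{\max} A\subset D_i^{**}\otimes_{\max} A$ isometrically) gives $\|t_i\|_{D_i^{**}\otimes_{\max} A}=\|t_i\|_{D_i\otimes_{\max} A}$ for each $i$. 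Chaining the three relations produces the desired inequality
$$\|t\|_{\ell_\infty(\{D_i\})\otimes_{\max} A}\le \sup_{i\in I}\|t_i\|_{D_i\otimes_{\max} A},$$
which is exactly \eqref{pty'} for the original family.

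For the second assertion, note that $D^{**}$ is always a von Neumann algebra, so requiring \eqref{pty} merely for families of von Neumann algebras is formally stronger than requiring it for families of biduals; hence such a hypothesis a fortiori suffices, by the argument just given.

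There is no real obstacle here beyond invoking Lemma \ref{ety} in the right direction: the non-trivial input is the isometric embedding \eqref{30/8}, whose proof already used the LP of $\C$, while the reduction itself is a one-line computation. In particular, no additional separability or quotient manipulation (as in Proposition \ref{rty1}) is needed at this stage.
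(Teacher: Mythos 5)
Your proof is correct and follows exactly the route the paper intends: the paper's proof is the single line ``This is an immediate consequence of Lemma \ref{ety}'', and your three-step chain (isometry of \eqref{30/8} with $C=A$, the assumed \eqref{pty'} for the family $(D_i^{**})$, and the coordinatewise isometry $D_i\otimes_{\max}A\subset D_i^{**}\otimes_{\max}A$ underlying \eqref{30/8/1}) is precisely the spelled-out version of that remark. Your closing observations -- that the only nontrivial input is \eqref{30/8}, whose proof uses the LP of $\C$, and that no separability or quotient reduction is needed here -- are also accurate.
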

   \begin{proof} This is an immediate consequence of Lemma \ref{ety}.
   \end{proof}
    
      In \cite{Kiuhf}, where Kirchberg shows that the $C^*$-algebra $\C=C^*(\F_\infty)$
      has the  LP, he also states 
      that if a $C^*$-algebra $C$ has the LP
then
for any von Neumann algebra $M$ 
            the nor-norm coincides on $M \otimes C$ (or on $C \otimes M$) with the max-norm.
      We will show that the converse also holds.
      The nor-norm of an element $t\in M\otimes C$ ($C$ any $C^*$-algebra) was defined by Effros and Lance 
      \cite{EL} as
            $$\|t\|_{\rm nor} =\sup\{ \| \sigma \cdot\pi (t)\|\}$$
            where the sup runs over all $H$ and all commuting pairs of representations          
            $\sigma: M \to B(H)$, $\pi: C \to B(H)$ with the restriction that $\sigma$ is normal on $M$. Here
            $\sigma \cdot\pi:  M \otimes C \to B(H)$ is the $*$-homomorphism defined by $\sigma \cdot\pi(m \otimes c)= \sigma(m) \pi( c)$ ($m\in M$, $c\in C$).
            The norm $\|\ \|_{\rm nor}$  is a $C^*$-norm and
              $M \otimes_{\rm nor} C$ is defined as the completion of $M\otimes  C$
            relative to this norm.
           One can formulate a similar definition for $C\otimes_{\rm nor} M$.
           See \cite[p. 162]{P6} for more on this.
          \\ We will invoke the following elementary fact (we include its  proof
          for lack of 
            a reference).
           
           \begin{lem}\label{19/11} Let $D$ be another $C^*$-algebra.
           Then for any c.c.p. map $u:C\to D$ the mapping
           $Id_M \otimes  u: M \otimes C \to M \otimes D$ extends to a 
           contractive (and
           c.p.) map from $M \otimes_{\rm nor} C$
           to
           $M \otimes_{\rm nor} D$.
           \end{lem}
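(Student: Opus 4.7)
I would unwind the definition of the nor-norm and reduce the lemma to the following: for any commuting pair $(\sigma,\pi')$ with $\sigma:M\to B(H)$ a normal representation and $\pi':D\to B(H)$ a representation, one has $\|(\sigma\cdot\pi')(Id_M\otimes u)(t)\|\le\|t\|_{M\otimes_{\rm nor}C}$ for all $t\in M\otimes C$. The obstruction to reading this off the definition of the nor-norm on the $C$-side directly is that $\pi'\circ u$ is only c.c.p., not a $*$-homomorphism; the plan is therefore to dilate $\pi'\circ u$ to a genuine $*$-representation $\hat\pi$ of $C$, compatibly with a normal extension $\tilde\sigma$ of $\sigma$, so that $(\tilde\sigma,\hat\pi)$ becomes an admissible pair for the nor-norm on $M\otimes C$.

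For the dilation step, I would use the standard Stinespring--GNS construction. Setting $\psi:=\pi'\circ u$, I would build the Hilbert space $\hat H$ as the separated completion of $C\otimes H$ (unitising $C$ if necessary) under the sesquilinear form $\langle c\otimes h,c'\otimes h'\rangle=\langle\psi(c'^*c)h,h'\rangle$, with the usual isometry $V:H\to\hat H$, $Vh=1\otimes h$, and left $*$-representation $\hat\pi(a)(c\otimes h)=ac\otimes h$ satisfying $\psi(c)=V^*\hat\pi(c)V$. I would then define the auxiliary action $\tilde\sigma(m)(c\otimes h):=c\otimes\sigma(m)h$, relying on the commutation of $\sigma(M)$ with $\pi'(D)\supset\psi(C)$ to verify that $\tilde\sigma(m)$ descends to the quotient, that $\tilde\sigma$ is a $*$-homomorphism commuting with $\hat\pi$, and that $V$ intertwines $\sigma$ and $\tilde\sigma$.

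The main obstacle I expect is verifying that $\tilde\sigma$ is \emph{normal}; this is precisely where one uses normality of the original $\sigma$. For a bounded increasing net $m_\alpha\nearrow m$ in $M_+$, and writing $A=\psi(c^*c)^{1/2}$ (which commutes with $\sigma(M)$ by functional calculus since $\psi(c^*c)$ does), a short manipulation yields $\langle\tilde\sigma(m_\alpha)(c\otimes h),c\otimes h\rangle=\langle\sigma(m_\alpha)Ah,Ah\rangle\nearrow\langle\sigma(m)Ah,Ah\rangle$, and polarisation together with uniform boundedness of $\{\tilde\sigma(m_\alpha)\}$ promotes this to WOT convergence on $\hat H$. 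Once normality of $\tilde\sigma$ is in hand, a direct computation on elementary tensors gives $V^*\tilde\sigma(m)\hat\pi(c)V=\sigma(m)\psi(c)$, whence
$$(\sigma\cdot\pi')(Id_M\otimes u)(t)=V^*(\tilde\sigma\cdot\hat\pi)(t)V,$$
and the right-hand side is bounded by $\|t\|_{M\otimes_{\rm nor}C}$ since $(\tilde\sigma,\hat\pi)$ is admissible. Taking the supremum over $(\sigma,\pi')$ yields the contractivity; complete positivity of the extension then follows because $Id_M\otimes u$ is already c.p.\ on the algebraic tensor product, and this property passes to the continuous extension between $C^*$-algebras (or, equivalently, by viewing $M\otimes_{\rm nor}\,\cdot\,$ as a quotient of $M\otimes_{\max}\,\cdot\,$).
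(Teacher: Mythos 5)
Your proof is correct, but it takes a genuinely different route from the one in the paper. The paper argues by duality: it identifies the states of $M \otimes_{\rm nor} C$ with the jointly completely positive bilinear forms $f: M \times C \to \mathbb{C}$ of norm $\le 1$ that are normal in the first variable (following Effros--Lance), observes that precomposition with $u$ in the second variable, $(x,y)\mapsto f(x,u(y))$, preserves this class, and then deduces boundedness from $\|t\|_{\rm nor}\approx \sup_{f\in S_{\rm nor}}|f(t)|$, obtaining the exact contractivity only after establishing complete positivity (via the equality $\|t\|_{\rm nor}=\sup_f |f(t)|$ for $t\ge 0$). You instead perform a commutant-compatible Stinespring dilation of $\psi=\pi'\circ u$, extending $\sigma$ to a representation $\tilde\sigma$ on the dilation space and checking that normality survives; the key verifications (well-definedness of $\tilde\sigma$ on the separated completion via positivity of the matrix $[\psi(c_j^*c_i)]$ and its commutation with $\mathrm{diag}(\sigma(m))$, and the normality argument using $A=\psi(c^*c)^{1/2}\in \sigma(M)'$) all go through as you sketch them. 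What your approach buys is a direct proof of the sharp bound $\|(\sigma\cdot\pi')((Id_M\otimes u)(t))\|\le \|t\|_{\rm nor}$ for \emph{all} $t$, with complete positivity falling out as an afterthought; what it costs is the extra work of constructing the dilation and verifying normality of $\tilde\sigma$, which the duality argument sidesteps entirely by working with functionals. Both proofs are complete and correct; yours is essentially a ``nor-norm'' analogue of the standard fact that c.c.p. maps tensorize with the max-norm, proved by the same dilation technique.
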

            \begin{proof} Let $S_{\max}$ (resp. $S_{\rm nor}$) denote the set of states
            on $M \otimes_{\rm max} C$ (resp. $M \otimes_{\rm nor} C$).
            An element of  $S_{\max}$ can be identified
          with a bilinear form $f: M \times  C \to \bb C$ of norm $\le 1$ that is c.p.
            in the sense that $\sum f(x_{ij}, y_{ij}) \ge 0$
            for all $n$ and all  $[x_{ij}]\in M_n(M)_+ $ and $[y_{ij}]\in M_n(C)_+ $. The set $S_{\rm nor}$  
            corresponds to the  forms $f\in S_{\max}$      that are
            normal in the first variable. See \cite{EL} or \cite[\S 4.5]{P6} for more information.
     Let  $u:C\to D$ be a c.c.p. map.
            Clearly,  for any  $f\in S_{\rm nor}$ the form $(x,y)\mapsto f(x, u(y))$  is
            still  in $S_{\rm nor}$. Since $\|t\|_{\rm nor} \approx \sup_{f\in S_{\rm nor}} |f(t)|$
            for any $t\in M \otimes  C$, the mapping $Id_M \otimes u$
            (uniquely) extends  to a bounded linear map     $u_M:M \otimes_{\rm nor} C\to
           M \otimes_{\rm nor} D$. 
           Since $u$ is c.p. we have $u_M(t^*t)\in  (M \otimes C)_+ :={\rm span}\{s^*s\mid s\in M \otimes C \}$ for any $t\in M \otimes  C$,
           and hence by density $u_M(t^*t)\in  (M \otimes_{\rm nor} C)_+$ for any $t\in M \otimes_{\rm nor}  C$.
           This means that $u_M$ is positive. Replacing $M$ by $M_n(M)$  shows that $u_M$ is c.p. Since $\|t\|_{\rm nor} = \sup_{f\in S_{\rm nor}} |f(t)|$ when $t\ge 0$, we have $\|u_M\|\le 1$.
       \end{proof}
           We will invoke the following simple elementary fact.
           \begin{lem}\label{nty} Let $A$ be a   $C^*$-algebra.
           For any family $(D_i)_{i\in I}$ 
 of $C^*$-algebras we have  a natural isometric embedding 
            $$\ell_\infty(\{D^{**}_i \}) \otimes_{\rm nor} A\subset \ell_\infty(\{D^{**}_i \otimes_{\rm nor} A\}) .$$          
           \end{lem}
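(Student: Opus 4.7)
The algebra $M := \ell_\infty(\{D_i^{**}\})$ is a von Neumann algebra (an $\ell_\infty$-product of von Neumann algebras); let $e_i \in M$ denote the $i$-th central coordinate projection, $p_i : M \to D_i^{**}$ the coordinate $*$-homomorphism, and $\iota_i : D_i^{**} \to e_i M$ the corresponding corner embedding, each being a normal $*$-homomorphism. By Remark \ref{rty0} we may and do assume $A$ is unital.

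The easy contractive direction, $\sup_i \|t_i\|_{\rm nor} \le \|t\|_{M \otimes_{\rm nor} A}$, holds because pullback by $p_i \times Id_A$ sends states of $D_i^{**} \otimes_{\rm nor} A$ to states of $M \otimes_{\rm nor} A$: given a c.p. bilinear form $\phi$ on $D_i^{**} \times A$ that is normal in the first variable, the form $(x,a) \mapsto \phi(p_i(x), a)$ is again normal in the first variable (composition of normal maps is normal).

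For the reverse inequality I would exploit the identity $\|t\|_{\rm nor}^2 = \sup_{f \in S_{\rm nor}} f(t^*t)$. Fix $f \in S_{\rm nor}$ and set $f_i(d,a) := f(\iota_i(d), a)$, which is c.p. bilinear and normal in the first variable on $D_i^{**} \times A$, with total mass $\lambda_i := f_i(1,1) = f(e_i, 1_A)$. Under the identification $M_* \simeq \ell_1(\{(D_i^{**})_*\})$, the finite partial sums $\sum_{i\in F} e_i$ converge weak-$*$ to $1_M$; applying the normal functional $x \mapsto f(x, 1_A)$ then yields $\sum_i \lambda_i = f(1_M, 1_A) = 1$. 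The same convergence, paired with normality in the first variable, gives $f(x,a) = \sum_i f_i(p_i(x), a)$ for every $x \in M$ and $a \in A$. Summing over a representation $t = \sum_k x_k \otimes a_k$ then yields
$$ f(s) = \sum_i f_i(s_i) \qquad \text{for every } s \in M \otimes A, \text{ where } s_i := (p_i \otimes Id_A)(s). $$

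Applying this with $s = t^*t$ and using $(t^*t)_i = t_i^* t_i$ (since $p_i \otimes Id_A$ is a $*$-homomorphism on the algebraic tensor product), we obtain
$$ f(t^*t) = \sum_i f_i(t_i^* t_i) \le \sum_i \lambda_i \|t_i\|_{\rm nor}^2 \le \sup_i \|t_i\|_{\rm nor}^2, $$
where we use that $\lambda_i^{-1} f_i \in S_{\rm nor}$ when $\lambda_i > 0$ and $f_i \equiv 0$ otherwise (by complete positivity). Taking the supremum over $f \in S_{\rm nor}$ and then square roots yields the desired bound $\|t\|_{\rm nor} \le \sup_i \|t_i\|_{\rm nor}$. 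The one delicate point is the Fubini-style identity $f(s) = \sum_i f_i(s_i)$; but since $t$ is a finite algebraic tensor, this reduces to applying a single normal linear functional on $M$ to the bounded increasing net $\{\sum_{i\in F} e_i \cdot x : F \subset I \text{ finite}\}$, which converges weak-$*$ to $x$.
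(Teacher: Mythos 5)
Your proof is correct, but it runs on the state side of the duality where the paper argues on the representation side. The paper's proof takes a commuting pair $(\sigma,\pi)$ with $\sigma$ normal on $M=\ell_\infty(\{D_i^{**}\})$, uses the central projections $e_i$ to split $H$, identifies $\sigma\simeq\oplus\,\ddot\sigma_i$, and reads off $\|\sigma\cdot\pi(t)\|=\sup_i\|\ddot\sigma_i\cdot\pi(t_i)\|$ directly, with no passage through $t^*t$. You instead decompose a binormal state $f$ as the convex combination $\sum_i\lambda_i(\lambda_i^{-1}f_i)$ along the same central projections and conclude via the $C^*$-identity $\|t\|_{\rm nor}^2=\sup_f f(t^*t)$ together with convexity ($\sum_i\lambda_i=1$). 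Both arguments hinge on the identical structural fact --- normality in the first variable forces compatibility with the central decomposition $1_M=\sum_i e_i$ --- so neither is more general; yours trades the bookkeeping of non-degenerate direct-sum representations for a Fubini-type interchange (which you justify correctly, since $t$ is a finite tensor and $(\sum_{i\in F}e_i)x\to x$ weak*) plus a Cauchy--Schwarz argument to dispose of the $\lambda_i=0$ terms. Two minor points: Remark \ref{rty0} concerns the max-norm, so the reduction to unital $A$ really rests on the (equally standard, but unstated) fact that the nor-norm on $M\otimes A$ is unchanged when $A$ is replaced by its unitization --- alternatively you could replace $1_A$ by a bounded approximate unit of $A$ throughout; the paper's representation-theoretic formulation handles non-unital $A$ for free via non-degeneracy. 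Also, your ``easy direction'' via pullback of states is fine, though it is simply the contractivity of the $*$-homomorphism $M\otimes_{\rm nor}A\to D_i^{**}\otimes_{\rm nor}A$ induced by the normal coordinate projection $p_i$, which the paper dismisses as obvious.
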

            \begin{proof}   
            Let $D=c_0(\{D_i\})$ so that $D^{**}=\ell_\infty(\{D^{**}_i\}) $.              Consider $t\in D^{**}\otimes  A$.  Let $\pi: A \to B(H)$, $\sigma: D^{**} \to \pi(A)'$
            be commuting non-degenerate $*$-homomorphisms with $\sigma$ \emph{normal}.  Thus $\sigma$   is the canonical weak* to weak* continuous extension
            of $\sigma_{|D}: D \to \pi(A)'$. It will be notationally convenient to
        view $D_i$ as a   subalgebra of $D$.
            We observe that  $\sigma_{|D}$ is the direct sum of representations of the $D_i$'s.
            Let  $\sigma_i: D_i  \to \pi(A)'$ be defined by
             $ \sigma_i(x) = \sigma(x) $ (recall   $D_i\subset D$). 
            There is an orthogonal decomposition of $H$ of the form
            $Id_H= \sum_{i\in I} p_i$ with $p_i\in \sigma(D) \subset \pi(A)'$ such that
            for any $c=(c_i)\in D$ we have
            $\sigma(c)= \text{norm sense}\sum_{i\in I}   \sigma_i(c_i) $ and also
              $\sigma_i(x)=p_i \sigma_i(x)=\sigma_i(x)p_i $ for any $x\in D_i$.
            Let $\ddot\sigma_i: D_i^{**} \to \pi(A)'$
            be the weak* to weak* continuous extension of $\sigma_i$. Since $\sigma$ is normal,
            we have then for any $c''=(c_i'')\in \ell_\infty(\{D^{**}_i\}) $
            $$\sigma(c'') =\text{weak* sense}   \sum\nl_{i\in I} \ddot\sigma_i(c_i'').$$
            This means $ \sigma\simeq \oplus  \ddot\sigma_i$.
            Therefore for any $t\in D^{**} \otimes  A= \ell_\infty(\{D^{**}_i\}) \otimes  A$ we have
            $$\| \sigma \cdot\pi (t)\|= \sup\nl_{i\in I} \| \ddot\sigma_i \cdot \pi(t_i) \|\le \sup\nl_{i\in I} \|t_i\|_{D^{**}_i \otimes_{\rm nor} A}.$$
            Taking the sup over all the above specified pairs $(\sigma,\pi)$,
            we obtain $\|t\|_{\rm nor} \le \sup\nl_{i\in I}\|t_i\|_{D^{**}_i \otimes_{\rm nor} A},$
whence (since the converse is obvious) a natural isometric embedding 
            $$D^{**}\otimes_{\rm nor} A\subset \ell_\infty(\{D^{**}_i \otimes_{\rm nor} A\}) .$$
 This completes the proof. \end{proof}

           The next statement is now an easy consequence of Theorem \ref{L1}.
           \begin{thm}\label{7/9/3}
           \label{LP2} Let $A$ be a separable $C^*$-algebra.
  The following are equivalent:
 \item{\rm (i)} The algebra  $A$ has the lifting property (LP).
  \item{\rm (i)'} The algebra  $A$ satisfies \eqref{pty}.
   \item{\rm (ii)} For any von Neumann algebra $M$ we have
    \begin{equation}  \label{27/9}
     M\otimes_{\rm nor} A=M\otimes_{\max} A \ \text{ (or equivalently }A\otimes_{\rm nor} M=A\otimes_{\max} M).\end{equation}    
   \item{\rm (ii)'} For any $C^*$-algebra $D$ we have 
   $$D^{**}\otimes_{\rm nor} A=D^{**}\otimes_{\max} A \ \text{ (or equivalently }A\otimes_{\rm nor} D^{**}=A\otimes_{\max} D^{**}).$$
           \end{thm}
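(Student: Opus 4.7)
The equivalence (i)$\iff$(i)$'$ is precisely Theorem \ref{L1}, and the direction (i)$\Rightarrow$(ii) is Kirchberg's theorem from \cite{Kiuhf}, recalled in the introductory discussion above. The new content to verify is therefore the converse (ii)$\Rightarrow$(i)$'$ together with the reduction (ii)$\iff$(ii)$'$.

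For (ii)$\iff$(ii)$'$, the forward direction is immediate since every bidual $D^{**}$ is itself a von Neumann algebra. For the converse, I would use the classical fact that every von Neumann algebra $M$ is a direct summand of its bidual $M^{**}$, via a central projection $z\in M^{**}$ with $M\cong zM^{**}$. Both the max- and the nor-tensor products split over the central idempotent $z\otimes 1_A$, so applying (ii)$'$ with $D=M$ and restricting to the $M$-summand yields (ii) for $M$.

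For the key new implication (ii)$\Rightarrow$(i)$'$ I would argue as follows. Given a family $(D_i)_{i\in I}$, set $M:=\ell_\infty(\{D_i^{**}\})$, which is a von Neumann algebra. Hypothesis (ii) gives $M\otimes_{\max}A=M\otimes_{\rm nor}A$, and combining \eqref{30/8} of Lemma \ref{ety} with Lemma \ref{nty} yields the chain of isometric inclusions
\[
\ell_\infty(\{D_i\})\otimes_{\max}A \;\hookrightarrow\; M\otimes_{\max}A \;=\; M\otimes_{\rm nor}A \;\hookrightarrow\; \ell_\infty(\{D_i^{**}\otimes_{\rm nor}A\}).
\]
For $t\in\ell_\infty(\{D_i\})\otimes A$ each component $t_i$ sits in $D_i\otimes A$, and the supremum on the right can be rewritten using the classical isometric embedding $D_i\otimes_{\max}A \hookrightarrow D_i^{**}\otimes_{\rm nor}A$. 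This last fact is proved by taking any commuting pair $\sigma_i\colon D_i\to B(H)$, $\pi_A\colon A\to B(H)$, extending $\sigma_i$ normally to $D_i^{**}\to B(H)$, and noting that commutation with $\pi_A(A)$ is preserved under weak$^*$-limits in $B(H)$. It yields $\|t_i\|_{D_i^{**}\otimes_{\rm nor}A}=\|t_i\|_{D_i\otimes_{\max}A}$, and hence \eqref{pty'}.

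The main obstacle, modest given the scaffolding already in place, is to align the four norm variants (max versus nor, with versus without biduals) so that the isometric chain assembles cleanly; the one classical input beyond Lemmas \ref{ety} and \ref{nty} is the identification of the image of $D\otimes_{\max}A$ inside $D^{**}\otimes_{\rm nor}A$ with its max-norm, which is dual to the universal normal-extension property of c.p.\ maps into biduals.
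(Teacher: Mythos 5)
Your proposal is correct and follows essentially the same route as the paper: (i)$\Leftrightarrow$(i)$'$ is quoted from Theorem \ref{L1}, (i)$\Rightarrow$(ii) is Kirchberg's theorem, and the key implication back to \eqref{pty} is obtained by exactly the paper's chain of isometric embeddings from Lemmas \ref{ety} and \ref{nty} together with the classical isometry $D_i\otimes_{\max}A\hookrightarrow D_i^{**}\otimes_{\rm nor}A$. The only cosmetic difference is that you prove (ii)$'\Rightarrow$(ii) directly via the central summand $M\cong zM^{**}$, whereas the paper obtains that implication for free by closing the cycle (ii)$\Rightarrow$(ii)$'\Rightarrow$(i)$'\Leftrightarrow$(i)$\Rightarrow$(ii); both are valid.
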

            \begin{proof}  
         The equivalence  (i) $\Leftrightarrow$ (i)'  duplicates for convenience part of Theorem \ref{L1}.\\
            (i) $\Rightarrow$ (ii) boils down to Kirchberg's result
            from \cite{Kiuhf} 
            that $\C$ satisfies (ii),
            for which a simpler proof was given in \cite{Pjot} (see also 
            \cite[Th. 9.10]{P6}).
            Once this is known, if $A$ has the LP then $A$ itself satisfies (ii). 
            Indeed, we may assume $A=\C/\cl I$ and by Lemma \ref{19/11}
             if $r: A \to \C$ is a c.c.p.   lifting 
            then 
            $r_M: M\otimes _{\rm nor} A$ to $M\otimes _{\rm nor} \C=M\otimes _{\rm max} \C$
             is   contractive, from which \eqref{27/9} follows. A priori
                this uses the separability of $A$ but  we will give a direct proof of (i)'  $\Rightarrow$ (ii) valid in the non-separable case in \S \ref{non-sep}.
 \\              
     (ii) $\Rightarrow$  (ii)' is trivial.    Assume (ii)' (with $A$ possibly non-separable). By Lemma \ref{nty} we have
                          a natural isometric embedding 
            $$\ell_\infty(\{D^{**}_i\})\otimes_{\max} A\subset \ell_\infty(\{D^{**}_i \otimes_{\max} A\}) .$$
              By
            \eqref{30/8/1} and \eqref{30/8} we must have \eqref{pty}, which means 
            that (i)' holds. Thus (ii)' $\Rightarrow$ (i)'.
          \end{proof}
   
   We will prove a variant of the preceding theorem in terms of ultraproducts
   in \S \ref{uuty}.
     \begin{rem} It is known (see \cite[Th. 8.22]{P6}) that
 we always have an isometric natural embedding
     $$   D^{**} \otimes_{\rm bin} A^{**}\subset (D \otimes_{\max} A)^{**} .$$
     Therefore, for arbitrary $D$ and $A$, we have an isometric natural embedding
      \begin{equation} \label{15/11}D^{**} \otimes_{\rm nor} A \subset  (D \otimes_{\max} A)^{**}.\end{equation}
     Thus (ii)' in Theorem \ref{LP2} can be reformulated
     as saying that for any $D$ we have an isometric natural embedding
     \begin{equation} \label{16/11} D^{**} \otimes_{\max} A \subset  (D \otimes_{\max} A)^{**}.\end{equation}
     This is the analogue for the max-tensor product of 
     Archbold and Batty's property $C'$ from \cite{[AB]}, which is closely related to the local reflexivity  of \cite{EH} (see \cite[p. 310]{P4}
     or \cite[Rem. 8.34]{P6} for more information on this topic).
     However, what matters here is   the injectivity of the $*$-homomorphism in \eqref{16/11}. Its continuity is guaranteed by \eqref{15/11}. In sharp contrast
     for property $C'$ continuity is what matters while injectivity is
     automatic since the min-tensor product is injective.
     
     \end{rem}
      \begin{rem}\label{ety'} Let $A$ be a $C^*$-algebra.
      We already used in Lemma \ref{ety} that fact that for any other $C^*$-algebra
      $D$  we have an \emph{isometric} natural morphism
      $D\otimes_{\max} A \to D\otimes_{\max} A^{**}$.
      Similarly, for any von Neumann algebra $M$, the analogous
      morphism
      $M\otimes_{\rm nor} A \to M\otimes_{\rm nor} A^{**}$
      is \emph{isometric}.  This follows from the basic observation that
      if $\sigma: M\to B(H)$ and $\pi:A \to B(H)$ are representations 
      with commmuting ranges, then the canonical normal
      extension $\ddot \pi$ of $\pi$ to $A^{**}$
    takes values in $\sigma(M)'$.
      \end{rem}
   \begin{rem}[On the LLP]  Following Kirchberg \cite{Kir},
   a unital $C^*$-algebra $A$ is said to have the  local lifting property (LLP) if 
   for any unital c.p. map $u: A \to C/\cl I$ ($C/\cl I$ being any quotient  of a  unital $C^*$-algebra $C$)
   the map $u$ is ``locally liftable" in the following sense: for any f.d. operator system
   $E\subset A$ the restriction $u_{|E}: E \to C/\cl I$ admits a u.c.p. lifting
   $u^E: E \to C$. The \emph{crucial difference} between ``local and global" is that 
   a priori $u_{|E}$ \emph{does not} extend to a u.c.p. map on the whole of $A$.
   If $A$ is not unital we say that it has the LLP if its unitization does.
   Equivalently a general $C^*$-algebra $A$  has the LLP
   if  for any c.c.p. map $u: A \to C/\cl I$ ($C/\cl I$ being any quotient  of a $C^*$-algebra $C$) and any f.d. subspace  
   $E\subset A$ the restriction $u_{|E}: E \to C/\cl I$ admits a   lifting
   $u^E: E \to C$ such that $\|u^E\|_{cb} \le 1$ (see \cite[Th. 9.38]{P6}).
\\
       Kirchberg \cite{Kir} proved that $A$ has the LLP if and only if 
   $B(\ell_2) \otimes_{\min} A= B(\ell_2) \otimes_{\max} A$ or equivalently 
    if and only if $\ell_\infty(\{M_n\mid n\ge 1\}) \otimes_{\min} A= \ell_\infty(\{M_n\mid n\ge 1\})  \otimes_{\max} A$.
    Using this it is easy to check, taking $\{D_i\}=\{M_n\mid n\ge 1\}$,  that \eqref{pty}
    implies the LLP.
   \end{rem}
   \begin{rem}[Global versus local]\label{glolo}
   Clearly the LP implies the LLP.  Kirchberg observed in \cite{Kir} that 
  if his conjecture\footnote{\bf according to a recent paper entitled
MIP* = RE posted on arxiv in Jan. 2020 by
 Ji, Natarajan, Vidick,  Wright, and  Yuen  this conjecture is not correct} that $\C \otimes_{\min} A= \C \otimes_{\max} A $
  for any $A$ with LLP (or equivalently just for $A=\C$)   is correct
  then conversely the LLP implies the LP for separable $C^*$-algebras.
  \\
   Note that if $\C \otimes_{\min} \C= \C \otimes_{\max} \C $, then
   since $\C$ satisfies \eqref{pty} we have $\ell_\infty(\C) \otimes_{\min} \C= \ell_\infty(\C) \otimes_{\max} \C $,
   and  hence for any  $A$ with LLP also
   \begin{equation}  \label{4/9}
   \ell_\infty(\C) \otimes_{\min} A= \ell_\infty(\C) \otimes_{\max} A .\end{equation} 
      Now  \eqref{4/9}  obviously implies (recalling Proposition \ref{rty1}) the property in \eqref{pty}, which by Theorem \ref{L1}
    implies the LP in the separable case, whence another viewpoint on Kirchberg's observation.

    Kirchberg showed in \cite{Kir} that a $C^*$-algebra $D$ has the 
    weak expectation property (WEP) (for which 
    we refer the reader to  \cite[p. 188]{P6}) if and only if $D \otimes_{\max} \C=D \otimes_{\min} \C$.
    Thus his 
      conjecture is equivalent to the assertion that $\C$ has 
   the WEP or that any $C^*$-algebra $D$ is a    quotient of a WEP $C^*$-algebra. Such $D$s are called QWEP. He also showed that it is  equivalent
    to the Connes embedding problem (see \cite[p. 291]{P6}).
    
    Note that if all the $D_i$ are WEP, the fact that $\C$
    satisfies \eqref{pty} tells us that
    $   {\ell_\infty(\{D_i\}) \otimes_{\max} \C}\to {\ell_\infty(\{D_i \otimes_{\min} \C\}) }$
    is isometric, and hence that $   {\ell_\infty(\{D_i\}) \otimes_{\max} \C}=
       {\ell_\infty(\{D_i\}) \otimes_{\min} \C}$. Thus it follows that
    $\ell_\infty(\{D_i\})$ is WEP.

    Yet another way to look at the problem whether
   the LLP implies the LP for separable $C^*$-algebras is through the observation (pointed out by a referee) that
    $A$ has the LLP if and only if $   {\ell_\infty(\{D_i\}) \otimes_{\max} A}\to {\ell_\infty(\{D_i \otimes_{\max} A\}) }$ is isometric (or equivalently injective)
    for any family $(D_i)$ of QWEP $C^*$-algebras.
    This is easily checked using \eqref{25/8/4}. If Kirchberg's conjecture was correct, the LLP of $A$   would imply the same for arbitrary $D_i$ and hence by Theorem \ref{L1} this would imply that $A$ has the LP in the separable case.
\end{rem}

   \begin{rem}[Counterexamples to LP] For the  full $C^*$-algebra $C^*(G)$ of a discrete group $G$,
   the LP holds both when $G$ is   amenable   and when $G$ is a  free group. It is thus not easy to find
   counterexamples, but  the existence of $C^*(G)$'s failing LP has been proved using property (T)
   groups by Ozawa in \cite{Ozpams}. Later on, Thom \cite{Thom} gave an explicit example of $G$
   for which  $C^*(G)$ fails the LLP. More recently, Ioana, Spaas and Wiersma \cite{[ISW]}
   proved that $ SL_n(\Z)$ for $n\ge 3$ and many other similar property (T) groups fail the LLP. See \S \ref{ill} for more on this theme.
   \end{rem}
   
   {\bf Abbreviations and notation} As is customary
   we abbreviate completely positive by c.p.
   completely bounded by c.b. unital completely positive by u.c.p.
   and contractive completely positive by c.c.p.
   Analogously we will abbreviate maximally bounded, maximally positive and unital maximally positive
   respectively by m.b. m.p. and u.m.p.
     We also use f.d. for finite dimensional.
   We denote by $B_E$ the closed unit ball of a normed space $E$,
   and by $Id_E: E \to E$ the identity operator on $E$.
   We denote by $E\otimes F$ the \emph{algebraic} tensor product of two vector
   spaces.
   Lastly, by an ideal in a $C^*$-algebra we implicitly mean a two-sided, closed and self-adjoint ideal.
   
  \section{Maximally bounded   and maximally positive maps}\label{var}
  
 Let $E\subset A$ be an operator space
  sitting in a $C^*$-algebra $A$.
  Let $D$ be another $C^*$-algebra. 
  Recall we denote (abusively) by $D  \otimes_{\max }  E$
  the closure of $D  \otimes   E$ in $D\otimes_{\max}  A$, and we denote
  by $\| \  \|_{\max}$ the norm induced
  on  $D  \otimes_{\max }  E$ by $D  \otimes_{\max }  A$.
  We define similarly $E  \otimes_{\max }  D$. 
  We should emphasize that
  $D  \otimes_{\max }  E$ (or $E  \otimes_{\max }  D$) depends on $A$ and on the embedding
  $E\subset A$, but there will be no risk of confusion.
  Of course we could also define $E\otimes_{\max} F\subset A\otimes_{\max} D$ for a subspace $F\subset D$
  but we will not go that far.
   Let $C$ be  another $C^*$-algebra.
  
  We will denote by ${MB}(E,C)$ the set of maps $u: E \to C$
  such that for any $C^*$-algebra $D$, the map $Id_D \otimes
 u: D \otimes E \to  D \otimes C$  extends to a bounded
  mapping $u_D: D  \otimes_{\max }  E\to D  \otimes_{\max }  C$,
  and moreover such that the bound on $\|u_D\|$ is uniform over all $D$'s.
  We  call such maps ``maximally bounded" or ``max-bounded"
  or simply m.b. for short. (They are called $(\max\to \max)$-tensorizing in \cite{P6}.)
  We denote
  $$\|u\|_{ mb}= \sup\{ \| 
 u_D: D \otimes_{\max } E \to  D \otimes_{\max } C\| \}$$
 where the sup runs over all possible $D$'s.\\
We have clearly $\|u_D\|_{mb} \le \|u\|_{ mb}$.

Moreover, if $A$ is unital, assuming that $E$ is an operator system 
let us denote by ${MP}(E,C)$ the subset formed of the   maps $u$
such that $Id_D \otimes
 u: D \otimes E \to  D \otimes C$ is positive  for any $D$, by which we mean
   that $$[Id_D \otimes
 u]  ( ( D \otimes E)^+_{\max}) \subset ( D \otimes C)^+_{\max}.$$
 Equivalently, $u_D$ is positive for all $D$'s.
 We call such maps ``maximally positive" (m.p.  in short).\\ 
 Replacing $D$ by $M_n(D)$ shows that the  latter maps will be automatically c.p.\\
 In passing, recall that for any c.p. $u:E \to C$ we have  $\|u\|_{cb}=\|u\|=\|u(1)\|$,
 and hence $\|u\|_{mb}=\|u(1)\|$ for any m.p. $u$.\\  
Obviously we have  $\|u\|_{ mb}=1$ if $E=A$ and $u$ is a  $*$-homomorphism.
 
 Clearly (taking $D=M_n$) we have ${MB}(E,C)\subset CB(E,C)$ and
  \begin{equation}  \label{23/8} \forall u \in {MB}(E,C)\quad 
  \|u\|_{cb} \le \|u\|_{ mb}.
  \end{equation}      
 \begin{pro}  
    A map $u: E \to C$ belongs to  ${MB}(E,C)$ if and only if $Id_{\C} \otimes u$ defines a bounded map
    from $ \C \otimes_{\max} E $ to  $ \C    \otimes_{\max} C$ 
    and we have
    $$  \|u\|_{ mb}= \|Id_{\C} \otimes u: \C \otimes_{\max} E \to  \C    \otimes_{\max} C\|.$$ 
   \end{pro}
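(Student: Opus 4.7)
The forward direction is immediate: by definition of the $mb$-norm, $\|u_\C\| \le \|u\|_{mb}$. For the reverse, assume $K := \|Id_\C \otimes u : \C \otimes_{\max} E \to \C \otimes_{\max} C\| < \infty$. The plan is to reduce any $D$ to $D = \C$ in three successive steps.

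\emph{Step 1 (Separability).} By Kirchberg's lemma (Remark \ref{rty}), for any $t\in D\otimes E$ there is a separable $C^*$-subalgebra $\Delta\subset D$ containing the $D$-coefficients of $t$ with $\|t\|_{\Delta\otimes_{\max} E}=\|t\|_{D\otimes_{\max} E}$; applying the same lemma to $(Id_\Delta\otimes u)(t)$ and iterating to enlarge $\Delta$ produces a single separable $\Delta$ in which both $t$ and its image have the correct max-norms. So we may assume $D$ is separable.

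\emph{Step 2 (Unitization).} By Remark \ref{rty0}, $D$ sits as an ideal in its unitization $\widetilde D$, and $D\otimes_{\max} X\subset \widetilde D\otimes_{\max} X$ isometrically for any $C^*$-algebra $X$ (in particular for $X=A$ and $X=C$). Hence we may further assume $D$ is separable and unital.

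\emph{Step 3 (Quotient of $\C$).} Write $D=\C/\cl I$. By \eqref{25/8} applied to $A$ and to $C$, the quotient maps
$$Q_A:\C\otimes_{\max} A\twoheadrightarrow D\otimes_{\max} A,\qquad Q_C:\C\otimes_{\max} C\twoheadrightarrow D\otimes_{\max} C$$
are complete quotient $*$-homomorphisms. Restricting the first to the closure of $\C\otimes E$ in $\C\otimes_{\max} A$ yields a surjection $Q_E:\C\otimes_{\max} E\twoheadrightarrow D\otimes_{\max} E$; by \eqref{7/9/1} applied to finite-dimensional $E_0\subset E$, the image of the open unit ball of $\C\otimes_{\max} E$ is dense in the open unit ball of $D\otimes_{\max} E$, so given $\bar t\in D\otimes_{\max} E$ with $\|\bar t\|<1$ and $\vp>0$, one can lift to $\tilde t\in\C\otimes_{\max} E$ with $\|\tilde t\|<1+\vp$. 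Since $(Id_D\otimes u)\circ Q_E = Q_C\circ (Id_\C\otimes u)$ on the algebraic tensor product, continuity gives $u_D(\bar t)=Q_C(u_\C(\tilde t))$, whence
$$\|u_D(\bar t)\|_{D\otimes_{\max} C}\le \|u_\C(\tilde t)\|_{\C\otimes_{\max} C}\le K(1+\vp).$$
Letting $\vp\to 0$ and combining with Steps 1--2 gives $\|u_D\|\le K$ for all $D$, so $u\in MB(E,C)$ with $\|u\|_{mb}\le K=\|u_\C\|$. The two inequalities give the claimed equality.

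The main obstacle is the bookkeeping in Step 3: we must ensure that the quotient description of \eqref{25/8}/\eqref{7/9/1} indeed produces a genuine \emph{norm} quotient $Q_E$ at the level of the subspaces $E\subset A$ and $C\subset C$, so that lifts of prescribed norm exist. Once the lifting property of $Q_E$ is in hand (via the finite-dimensional case and an approximation), the rest is naturality of $Id_?\otimes u$ on the algebraic tensor product.
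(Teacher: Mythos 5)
Your proof is correct and follows essentially the same route as the paper: the paper's own proof is a one-line appeal to \eqref{25/8/4} (writing a separable unital $D$ as $\C/\cl I$ and using the quotient formula for the max tensor product), with the reductions to separable and unital $D$ already recorded in Remarks \ref{rty} and \ref{rty0}. Your three steps simply make those reductions and the norm-preserving lift via \eqref{7/9/1} explicit.
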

    \begin{proof} This follows easily from \eqref{25/8/4}.
     \end{proof}
     \begin{cor} If $C$ has the WEP then any
     c.b. map $u: E \to C$ is in ${MB}(E,C)$ and
      $\|u\|_{cb} = \|u\|_{ mb}$.
  \end{cor}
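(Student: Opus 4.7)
The plan is to combine the proposition just proved (which reduces the computation of $\|u\|_{mb}$ to a single tensor-product norm computation with $\mathscr{C} = C^*(\F_\infty)$) with Kirchberg's characterization of WEP, namely that $C$ has the WEP if and only if $\mathscr{C} \otimes_{\max} C = \mathscr{C} \otimes_{\min} C$ isometrically.

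More concretely, first I would invoke the preceding proposition to rewrite
$$\|u\|_{mb} = \|Id_{\mathscr{C}} \otimes u : \mathscr{C} \otimes_{\max} E \to \mathscr{C} \otimes_{\max} C\|.$$
Next, using the WEP of $C$, I would replace the target norm by the min-norm: since $\mathscr{C} \otimes_{\max} C = \mathscr{C} \otimes_{\min} C$ isometrically, the right-hand side equals the norm of the same map viewed into $\mathscr{C} \otimes_{\min} C$. On the source side, since the max-norm dominates the min-norm on the algebraic tensor product $\mathscr{C} \otimes E$, the identity extends to a contraction $\mathscr{C} \otimes_{\max} E \to \mathscr{C} \otimes_{\min} E$. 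Composing, we obtain
$$\|u\|_{mb} \le \|Id_{\mathscr{C}} \otimes u : \mathscr{C} \otimes_{\min} E \to \mathscr{C} \otimes_{\min} C\|.$$

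Finally, the standard characterization of complete boundedness in terms of min-tensorization gives $\|Id_D \otimes u : D \otimes_{\min} E \to D \otimes_{\min} C\| \le \|u\|_{cb}$ for every $C^*$-algebra $D$, in particular for $D = \mathscr{C}$. This yields $\|u\|_{mb} \le \|u\|_{cb}$. Combined with the reverse inequality \eqref{23/8}, which was established unconditionally earlier, we conclude $\|u\|_{mb} = \|u\|_{cb}$, and in particular any c.b.\ map into $C$ automatically lies in $MB(E,C)$.

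No step here is really an obstacle; the only substantive input beyond the proposition already proved is Kirchberg's theorem $\mathscr{C} \otimes_{\max} C = \mathscr{C} \otimes_{\min} C$ characterizing WEP, which is quoted later in Remark \ref{glolo} of the paper and may simply be cited.
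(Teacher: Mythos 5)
Your proposal is correct and follows essentially the same route as the paper: reduce $\|u\|_{mb}$ to the single norm $\|Id_{\mathscr C}\otimes u:\mathscr C\otimes_{\max}E\to\mathscr C\otimes_{\max}C\|$ via the preceding proposition, use Kirchberg's WEP characterization to replace the target by the min-norm, factor through $\mathscr C\otimes_{\min}E$, and bound by $\|u\|_{cb}$, concluding with \eqref{23/8}. Nothing to add.
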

   \begin{proof}  By the WEP of $C$, we have $\C \otimes_{\max} C=\C \otimes_{\min} C$ (see \cite[p. 380]{BO} or \cite[p. 188]{P6}), and hence
   $$\|Id_\C \otimes u: \C \otimes_{\max} E \to \C \otimes_{\max} C\|=
   \|Id_\C \otimes u: \C \otimes_{\max} E \to \C \otimes_{\min} C\|
   $$
   $$\le \|Id_\C \otimes u: \C \otimes_{\min} E \to \C \otimes_{\min} C\|\le \|u\|_{cb}.$$
     \end{proof}
\begin{rem}\label{rc} Let $u\in {MB}(E,C)$. Let $F\subset C$ be such that
$u(E)\subset F$ and let $B$ be another $C^*$-algebra. Then for any
 $v\in {MB}(F,B)$ the composition $vu: E \to B$ is in
 ${MB}(E,B)$ and $\|vu\|_{mb} \le  \|v\|_{mb}\|u\|_{mb}$.
 \end{rem}
 \begin{rem}\label{rw} When $E=A$  any   c.p. map
 $u: A \to C$  is in $ {MB}(A,C)$ and $\|u\|_{mb}=\|u\|$
 (see e.g.  \cite[p. 229]{P4} or \cite[Cor. 7.8]{P6}).
 However, this is no longer true in general when $E$ is merely an operator system in $A$. \emph{This distinction is important for the present work.}
 \end{rem} 
 
  \begin{rem}\label{wk} We will use the following well known elementary fact.
  Let $A,C$ be unital $C^*$-algebras. Let $E\subset A$ be an operator system.
  Let  $u: E \to C $ be a unital c.b. map.
  Then $u$ is c.p. if and only if $\|u\|_{cb}=1$ (see e.g. \cite{Pa2},  \cite[p. 24]{P4} or \cite[Th. 1.35]{P6}).
  \end{rem}
 
  \begin{rem}\label{rw1}  In the same situation, any \emph{unital} map $ u\in {MB}(E,C)$
  such that $\|u\|_{mb}=1$ must be   c.p. 
 Indeed,  
  \eqref{23/8} implies $\|u\|_{cb}=\|u(1)\|=1$,  and
  $u$ is  c.p.
 by the preceding remark.
 More precisely,   the same reasoning applied to the maps $u_D$ 
 (with $D$ unital) shows that $u$ is  m.p.
 \end{rem}
   \begin{rem}\label{rw2}  
   Let $u: E \to C$ with $E$ an operator system. Let $u_*:E \to C$ be defined by
   $u_*(x)=(u(x^*))^*$ ($x\in E$), so that $\|u_*\| =\|u\| $. We claim
   $\|u_*\|_{mb}=\|u\|_{mb}$ for any $u\in MB(E,C)$. Indeed, this is easy to check
   using $(u_D)_*=(u_*)_D$.\\
   A map $u: E \to C$ is called ``self-adjoint" if $u=u_*$.
 \end{rem}

The following important result  was shown to
the author by Kirchberg with permission to include it in \cite{P4}.
It also appears in \cite[Th. 7.6]{P6}. 

\begin{thm}[\cite{Ki6}]\label{L2} Let $A,C$ be $C^*$-algebras. Let
$i_C: C \to C ^{**}$ denote the inclusion map.
A map $u: A \to C$ is in  ${MB}(A,C)$
if and only if  $i_C u: A \to C^{**}$ is decomposable. Moreover
we have
$$\|u\|_{mb} = \|i_C u \|_{dec}.$$
 \end{thm}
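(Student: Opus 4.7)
My plan splits the claimed equality into two inequalities.

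For $\|u\|_{mb}\le \|i_C u\|_{dec}$, fix $\gamma>\|i_C u\|_{dec}$ and choose c.p.\ maps $S_1,S_2\colon A\to C^{**}$ such that the block map $V(a)=\begin{pmatrix}S_1(a) & i_C u(a)\\ (i_C u)_*(a) & S_2(a)\end{pmatrix}\colon A\to M_2(C^{**})$ is c.p.\ with $\|V\|<\gamma$. Since $V$ is c.p.\ on the full $C^*$-algebra $A$, Remark \ref{rw} gives $\|V\|_{mb}=\|V\|<\gamma$. Hence for every $C^*$-algebra $D$ the map $Id_D\otimes V\colon D\otimes_{\max}A\to M_2(D\otimes_{\max}C^{**})$ is c.p.\ of norm $<\gamma$. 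Reading off the $(1,2)$-corner yields $\|Id_D\otimes(i_C u)\colon D\otimes_{\max}A\to D\otimes_{\max}C^{**}\|<\gamma$, and the isometric embedding $D\otimes_{\max}C\subset D\otimes_{\max}C^{**}$ supplied by Remark \ref{ety'} reduces this to $\|Id_D\otimes u\colon D\otimes_{\max}A\to D\otimes_{\max}C\|<\gamma$. Taking the supremum over $D$ and letting $\gamma\searrow\|i_C u\|_{dec}$ finishes this half.

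For the reverse inequality $\|i_C u\|_{dec}\le\|u\|_{mb}$, I would combine a Stinespring/Wittstock dilation with the commutant structure of $C^{**}$. Assume $\|u\|_{mb}\le 1$; then $\|i_C u\|_{cb}\le 1$ by \eqref{23/8}, and after fixing a faithful normal representation $\pi_0\colon C^{**}\to B(H)$, Wittstock's decomposition theorem produces a $*$-representation $\pi\colon A\to B(K)$ together with contractions $V,W\colon H\to K$ with $\pi_0(i_C u(a))=W^*\pi(a)V$ and with the block map $\begin{pmatrix}W^*\pi(\cdot)W & W^*\pi(\cdot)V\\ V^*\pi(\cdot)W & V^*\pi(\cdot)V\end{pmatrix}$ c.p.\ of norm $\le 1$ into $M_2(B(H))$. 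To force the diagonal entries into $\pi_0(C^{**})$, I would exploit the full m.b.\ hypothesis by testing it with $D$ ranging over $C^*$-subalgebras of the commutant $\pi_0(C^{**})'\subset B(H)$: the resulting $*$-homomorphism $D\otimes_{\max}C\to B(H)$ composed with $Id_D\otimes u$ produces an additional commutation constraint on the dilation data, and a weak-$*$ compactness/averaging argument then places $W^*\pi(\cdot)W$ and $V^*\pi(\cdot)V$ inside $\pi_0(C^{**})$.

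The principal difficulty is precisely this last step: ensuring that the c.p.\ diagonal maps produced by Wittstock's construction actually take values in $C^{**}$ rather than merely in the ambient $B(H)$. Passage to the bidual is essential here, since a general m.b.\ map $u\colon A\to C$ need not be decomposable into $C$ itself; the von Neumann algebra $C^{**}$ absorbs the weak-$*$ limits and normal extensions forced by the dilation. A possibly cleaner alternative would bypass the dilation entirely and invoke a duality identifying the decomposable norm on maps into a von Neumann algebra $M$ with a max-type $C^*$-tensor-product norm on $A\otimes M_*$; combined with the inequality $\|Id_{C_*}\otimes u\|_{\max\to\max}\le\|u\|_{mb}\le 1$ (after justifying that $C_*$ can be approximated by genuine $C^*$-algebras in the sup defining $\|u\|_{mb}$), this would yield $\|i_C u\|_{dec}\le 1$ by Hahn--Banach.
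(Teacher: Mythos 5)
You should first note that the paper itself contains no proof of Theorem \ref{L2}: it is quoted from Kirchberg (a personal communication) and the proof is deferred to \cite{P4} and \cite[Th. 7.6]{P6}. So there is nothing in this paper to compare your argument against line by line; what follows judges the proposal on its own terms against the argument in those references.

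Your first half, $\|u\|_{mb}\le \|i_C u\|_{dec}$, is essentially correct and is the standard argument: tensor the c.p.\ block map $V$ with $Id_D$, use Remark \ref{rw} (a c.p.\ map defined on all of $A$ is max-bounded with $\|\cdot\|_{mb}=\|\cdot\|$), extract the $(1,2)$-corner, and pull back along the isometry $D\otimes_{\max}C\subset D\otimes_{\max}C^{**}$ of Remark \ref{ety'}. One bookkeeping slip: the dec-norm is the infimum of $\max(\|S_1\|,\|S_2\|)$, not of $\|V\|$, and for a positive $2\times 2$ block map one only has $\|V\|\le 2\max(\|S_1\|,\|S_2\|)$, so reading the corner off $\|V\|$ as you do loses a factor $2$. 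The repair is routine: $\|Id_D\otimes i_Cu\|\le\|Id_D\otimes i_Cu\|_{cb}\le\|Id_D\otimes i_Cu\|_{dec}\le\max_i\|Id_D\otimes S_i\|\le\max_i\|S_i\|$, since $Id_D\otimes S_i$ are c.p.\ on $D\otimes_{\max}A$ with the same norms.

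The genuine gap is the reverse inequality $\|i_Cu\|_{dec}\le\|u\|_{mb}$, and you have correctly located it yourself. The difficulty is not one that can be patched after the fact: if you first apply Wittstock's theorem to $i_Cu\colon A\to B(H)$ alone, the dilation $(\pi,V,W)$ is chosen with no reference to $M=\pi_0(C^{**})$, and there is in general no averaging that will push the diagonal corners $W^*\pi(\cdot)W$, $V^*\pi(\cdot)V$ into $M$; "testing against subalgebras of $M'$" constrains $u$, not a dilation already fixed. The correct order of operations (Kirchberg's, as carried out in \cite[Th. 6.20--6.22 and Th. 7.6]{P6}) is to dilate a different map: set $D=M'$ (or a weak* dense $C^*$-subalgebra) and consider $\hat u\colon M'\otimes_{\max}A\to B(H)$, $x'\otimes a\mapsto x'\pi_0(u(a))$. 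Its c.b.\ norm is $\le\|u\|_{mb}$, because $\hat u$ factors as $Id_{M'}\otimes u$ followed by the product $*$-homomorphism $M'\otimes_{\max}C\to B(H)$, and $\|Id_{M'}\otimes u\|_{cb}\le\|u\|_{mb}$ by replacing $M'$ with $M_n(M')$. Applying the c.b.\ factorization theorem to $\hat u$ produces a representation of the \emph{whole} algebra $M'\otimes_{\max}A$, i.e.\ a pair of commuting representations $\sigma$ of $M'$ and $\rho$ of $A$ on $K$; one then modifies $V,W$ into $M'$-module maps (the "trick" of Remark \ref{trick}, using the $\sigma$-invariant subspaces generated by $VH$ and $WH$), after which $W^*\rho(\cdot)V$ and the two diagonal corners automatically land in $\sigma(M')'\cap\rho(A)''$-independent computations inside $(\pi_0(M'))'=M=C^{**}$, with the right norm control. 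This step is the entire content of the theorem, and your proposal only gestures at it. The "cleaner alternative" via a max-type norm on $A\otimes C_*$ is not meaningful as stated, since $C_*$ is not a $C^*$-algebra and cannot be inserted into the supremum defining $\|u\|_{mb}$ without a substantial additional argument.
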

 Recall that a map $u: A \to C$ is called decomposable if it is a linear combination of c.p. maps.
 See  \cite{Ha} or 
\cite[\S 6]{P6} for  more on decomposable maps and the definition of the dec-norm.

 Elaborating on Kirchberg's argument we included in 
 \cite{P4} and later again  in \cite[Th. 7.4]{P6}
 the following variant as an extension theorem.
 
 \begin{thm}\label{ext} Let $A,C$ be $C^*$-algebras.
 Let $E\subset A$ be a subspace. 
Consider a map $u: E \to C$  in ${MB}(E,C)$.
Then 
$$\|u\|_{mb} = \inf\| \tilde u  \|_{dec},$$
where the infimum runs over all maps $\tilde u : A \to C^{**}$
such that ${\tilde u}_{|E} =i_C u$. Moreover, the latter infimum is attained.
 \end{thm}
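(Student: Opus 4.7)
The plan is to establish both inequalities in $\|u\|_{mb}=\inf\|\tilde u\|_{dec}$ separately, with attainment falling out of an explicit construction.

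For the direction $\|u\|_{mb}\le\|\tilde u\|_{dec}$, fix any extension $\tilde u:A\to C^{**}$ with $\tilde u_{|E}=i_Cu$ and chain three facts. First, by Remark \ref{ety'} the embedding $D\otimes_{\max}C\hookrightarrow D\otimes_{\max}C^{**}$ is isometric for every $C^*$-algebra $D$, hence $\|u\|_{mb,E\to C}=\|i_Cu\|_{mb,E\to C^{**}}$. Second, restricting $Id_D\otimes\tilde u$ from $D\otimes A$ to $D\otimes E$ gives $\|i_Cu\|_{mb}\le\|\tilde u\|_{mb}$. Third, Theorem \ref{L2} applied to the map $\tilde u:A\to C^{**}$ between $C^*$-algebras yields $\|\tilde u\|_{mb}=\|i_{C^{**}}\tilde u\|_{dec}\le\|\tilde u\|_{dec}$, the last step because $i_{C^{**}}:C^{**}\to C^{****}$ is a $*$-homomorphism and therefore dec-contractive. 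Taking the infimum over extensions gives the desired inequality.

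For the reverse inequality together with attainment, set $\lambda=\|u\|_{mb}$ and aim to construct a specific extension $\tilde u\in MB(A,C^{**})$ of $i_Cu$ with $\|\tilde u\|_{mb}\le\lambda$. The tensor form $Id_{\C}\otimes u:\C\otimes_{\max}E\to\C\otimes_{\max}C\hookrightarrow(\C\otimes_{\max}C)^{**}$ is bounded by $\lambda$, and since $(\C\otimes_{\max}C)^{**}$ is a dual Banach space, Hahn-Banach extends this linearly to $T:\C\otimes_{\max}A\to(\C\otimes_{\max}C)^{**}$ with $\|T\|\le\lambda$. Using \eqref{15/11} (which gives $\C\otimes_{\max}C^{**}\subset(\C\otimes_{\max}C)^{**}$) together with slice maps against states on $\C$, one extracts a map $\tilde u:A\to C^{**}$ extending $i_Cu$ on $E$ for which $Id_{\C}\otimes\tilde u$ still has norm $\le\lambda$, i.e.\ $\|\tilde u\|_{mb}\le\lambda$. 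A cleaner alternative is Paulsen's $2\times 2$ matrix trick: repackage $u$ as a c.p.\ map on the operator system $\bigl(\begin{smallmatrix}\C\cdot 1 & E\\ E^* & \C\cdot 1\end{smallmatrix}\bigr)\subset M_2(A)$, extend by Arveson to a c.p.\ map $M_2(A)\to B(\mathcal H)$, then project onto $M_2(C^{**})$ and read off the $(1,2)$-corner. Theorem \ref{L2} applied to the resulting $\tilde u$ gives $\|i_{C^{**}}\tilde u\|_{dec}\le\lambda$, and the canonical normal conditional expectation $P:C^{****}\to C^{**}$ (existing because $(C^{**})^{**}$ has a normal projection onto the von Neumann algebra $C^{**}$) pulls any near-optimal c.p.\ decomposition of $i_{C^{**}}\tilde u$ back to a decomposition of $\tilde u=P\circ i_{C^{**}}\tilde u$ with the same total norm, yielding $\|\tilde u\|_{dec}\le\lambda$. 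Combined with the first paragraph, this forces $\|\tilde u\|_{dec}=\lambda=\|u\|_{mb}$, proving both the reverse inequality and attainment.

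The main technical obstacle is the tensor-form extraction: converting an abstract Hahn-Banach linear extension $T:\C\otimes_{\max}A\to(\C\otimes_{\max}C)^{**}$ into one literally of the form $Id_{\C}\otimes\tilde u$ for a single $\tilde u:A\to C^{**}$. This is exactly the step where the argument for Theorem \ref{L2} must be revisited in the subspace setting, and the Paulsen-Arveson matrix route looks cleanest since it repackages the extension problem as a genuine c.p.\ extension to which Arveson's theorem applies directly, sidestepping the need to argue tensor-equivariance of $T$ directly.
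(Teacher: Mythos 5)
Your first inequality is fine and matches what the paper itself records in Remark \ref{r22} and \eqref{e22}: for any extension $\tilde u:A\to C^{**}$ of $i_Cu$ one has $\|u\|_{mb}=\|i_Cu\|_{mb}\le\|\tilde u\|_{mb}=\|i_{C^{**}}\tilde u\|_{dec}\le\|\tilde u\|_{dec}$. The problem is the reverse inequality, where both of your proposed constructions have genuine gaps. The Hahn--Banach route fails at its first step: a dual Banach space is not $1$-injective (nor injective at all) in the Banach category, so there is no norm-preserving linear extension of $Id_{\C}\otimes u$ from $\C\otimes_{\max}E$ to $\C\otimes_{\max}A$ with values in $(\C\otimes_{\max}C)^{**}$. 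Norm-preserving extension into a target space is available for scalar functionals or for $1$-injective targets such as $\ell_\infty(\Gamma)$ or $B(H)$ in the c.b. category, not for general biduals; the subsequent ``slice-map extraction'' difficulty you flag is therefore moot because $T$ itself need not exist.

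The Paulsen--Arveson alternative has a different but equally fatal defect: applied to the operator system $\bigl(\begin{smallmatrix}\C 1 & E\\ E^* & \C 1\end{smallmatrix}\bigr)\subset M_2(A)$ with off-diagonal corner $u$, it only uses $\|u\|_{cb}$ and produces a c.p.\ extension into $B(\mathcal H)$; there is no conditional expectation of $B(\mathcal H)$ onto $C^{**}$ in general, so the step ``project onto $M_2(C^{**})$'' is unjustified. Worse, if this route worked it would give an extension $\tilde u:A\to C^{**}$ with $\|\tilde u\|_{dec}$ controlled by $\|u\|_{cb}$ alone, hence $\|u\|_{mb}\le\|u\|_{cb}$ for maps on subspaces --- which contradicts the distinction the paper insists on in Remark \ref{rw} (on a mere subspace or operator system the mb-norm genuinely exceeds the cb-norm in general). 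The missing idea, visible in the ``refinement'' remark following the theorem and in Remark \ref{trick}, is Kirchberg's use of the commutant: one represents $M=C^{**}\subset B(H)$ (with infinite multiplicity or a cyclic commutant), forms $\hat u:M'\otimes_{\max}E\to B(H)$, $x'\otimes a\mapsto x'u(a)$, and shows that the hypothesis $u\in MB(E,C)$ is exactly what bounds $\|\hat u\|_{cb}$ on $M'\otimes_{\max}E$ by $\|u\|_{mb}$. \emph{Then} one applies the $2\times2$/Arveson machinery to $\hat u$ inside $M_2(M'\otimes_{\max}A)$, and ``the trick'' (commutation of the extension with $\sigma(M')$) forces the resulting map on $1\otimes A$ to take values in $M''=C^{**}$ and to be decomposable with dec-norm at most $\|u\|_{mb}$, which also yields attainment. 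Without tensoring against $M'$ your argument never uses the max-boundedness of $u$, which is the whole content of the theorem.
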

  \begin{rem}\label{r22} A fortiori, since $  \|  i_C u  \|_{dec}    \le \|  u  \|_{dec}$, we have
  $\|u\|_{mb} \le \|  u  \|_{dec}$.\\
  Moreover, for any $\tilde u : A \to C^{**}$ we have
  \begin{equation}\label{e22}
  \| \tilde u\|_{mb} =\| \tilde u  \|_{dec}.\end{equation}
  Indeed, since there is a c.c.p. projection
  $P: (C^{**})^{**} \to C^{**}$, we have 
    $\| \tilde u\|_{mb}  =\| i_{C^{**}}\tilde u\|_{dec} \ge
     \| P i_{C^{**}}\tilde u  \|_{dec}= \| \tilde u  \|_{dec}.$
  \end{rem} 
  \begin{rem} For any $C^*$-algebras $C,D$ the natural inclusion
  $D \otimes_{\max} C \subset D \otimes_{\max} C^{**}$ 
  is isometric (see Remark \ref{ety'}). Therefore,   for any $u: E \to C$ we have
  $$\|u\|_{mb} = \|i_C u\|_{mb} $$
  where $i_C: C \to C^{**}$ is the canonical inclusion.\\
  Moreover, if there is a projection $P: C^{**}\to C$ with $\|P\|_{dec}=1$
  (for instance if $P$ is c.c.p.) then
  $\| u\|_{dec} = \|i_C u\|_{dec} $.
  \end{rem}
 \begin{rem}[A refinement of Theorem \ref{ext}]
 Let $E\subset A$ and let $M\subset B(H)$ be a von Neumann algebra.
 Consider a map $u: E \to M$.
 Let us denote by $\hat u: M' \otimes E \to B(H)$ the linear map
 such that $\hat u(x'\otimes a) =x'u(a)$  ($x'\in M'$, $a\in E$).
 Then $$\|u\|_{mb} =\|\hat u : M' \otimes_{\max} E  \to B(H)\|_{cb}
 =  \inf\{\| \tilde u  \|_{dec} \mid \tilde u: A \to M,\  \tilde u_{|E}=u\}.$$
 If either  $M$ has infinite multiplicity
 or  $M'$ has a cyclic vector, then 
 $$\|\hat u : M' \otimes_{\max} E  \to B(H)\|_{cb}=\|\hat u : M' \otimes_{\max} E  \to B(H)\|.$$
 This is Th. 6.20 with Cor. 6.21 and Cor. 6.23 in \cite{P6}. \end{rem}
 
 \begin{rem}\label{trick} In the situation of the preceding remark,
 if $E$ is an operator system and if $\hat u : M' \otimes_{\max} E  \to B(H)$
 is c.p. (in particular if $u\in MP(E,M)$)
 then $u$ admits a c.p. extension $\tilde u :A \to M$
 with $\| \tilde u \|=\|u\|$ (in particular $\tilde u\in MP(A,M)$).
 This follows from Arveson's extension theorem for c.p. maps
 and the argument called ``the trick" in \cite[p. 87]{BO}.
 \end{rem}
 Let $E\subset A$ and $C$ be as in Theorem \ref{ext}. 
 To state our results in full generality, we
 introduce the space ${SB}(E,C)$ for which the unit ball
is
formed of all $u: E\to C$
satisfying an operator 
  analogue of \eqref{pty}.
  \begin{dfn}
  We call strongly maximally bounded the maps
  $u: E\to C$ such that, for any family $(D_i)_{i\in I}$, the map  $Id \otimes u$  defines 
   a uniformly bounded map
  from $\ell_\infty(\{D_i\}) \otimes  E $ 
 equipped with the norm induced by
 $\ell_\infty(\{D_i \otimes_{\max}  E\}) $ to $\ell_\infty(\{D_i\}) \otimes  C $
 equipped with the norm induced by
    $\ell_\infty(\{D_i\} )\otimes_{\max}  C$.
 We define 
 \begin{equation}\label{ptyo}\|u\|_{{sb}}= \sup\{\| (Id \otimes u)(t)\|_{\ell_\infty(\{D_i\}) \otimes_{\max} C} \mid  (t_i)_{i\in I} \in \ell_\infty(\{D_i\}) \otimes  E,\  \sup_{i\in I} \|t_i\|_{ D_i \otimes_{\max} C} \le 1\} ,\end{equation} 
 where the sup runs over all possible families $(D_i)_{i\in I}$ 
 of $C^*$-algebras.
 \end{dfn}
 
  \begin{dfn}\label{d23} Let $A,C$ be  $C^*$-algebras. Let $E\subset A$ be a subspace.
   A linear map $u: E\to C$ will be called maximally isometric (or $\max$-isometric in short)
   if for any $C^*$-algebra $D$ the associated map $Id_D \otimes u: D\otimes E\to D\otimes C$
   extends to an isometric map $D\otimes_{\max} E\to D\otimes_{\max} C$. 
   Equivalently, this means that $E\otimes_{\max} D\to C\otimes_{\max} D$ is isometric for all $C^*$-algebras $D$.
    \end{dfn}
These maps were called 
   $\max$-injective in \cite{P6}. We adopt here ``maximally  isometric"
   to emphasize the analogy with completely isometric.
 \begin{rem}\label{r27/11} For example, for any
$C^*$-algebra $C$ the canonical inclusion
$i_C : C \to C^{**}$ is $\max$-isometric (see 
Remark \ref{ety'} or \cite[Cor. 7.27]{P6}).  
Thus for any $D$ we have an isometric
$*$-homomorphism $D\otimes_{\max} C \to D\otimes_{\max} C^{**}$,
which implies
\begin{equation}\label{27/11}
(D\otimes C) \cap (D\otimes_{\max} C^{**})_+ \subset (D\otimes_{\max} C)_+ .
\end{equation}
Moreover,
the inclusion 
$\cl I\to C$ of an ideal, in particular the inclusion
$C\to C_1$ of $C$ into its unitization,
is $\max$-isometric (see e.g. \cite[Prop. 7.19]{P6}).
\end{rem}

\begin{rem}\label{rw13}  
We have $\|Id_A\|_{SB(A,A)}=1$ 
if (and only if) $A$ satisfies the property  \eqref{pty}.
In that case, by Remark \ref{rw1} for any 
$u\in CP(A,C)$
(where $C$ is an arbitrary $C^*$-algebra)
 we
 have
 \begin{equation}\label{13/11}
 \|u\|_{SB(A,C)}=\|u\|.
 \end{equation}
\end{rem}

\begin{rem}\label{r23} We will use the elementary fact (see \cite[Cor. 7.16]{P6} for a proof)
 that  $D \otimes_{\max} E \to D \otimes_{\max} C$
 is isometric for all $D$ (i.e. it is $\max$-isometric) if and only if
this holds for $D=\C$. In particular checking $D$ separable is enough.
\end{rem}

 As we will soon show (see Theorem \ref{lift}), maximally bounded maps admit
 maximally bounded liftings when \eqref{pty} holds.
 To tackle u.c.p. liftings we will need  a bit more work.
The following perturbation lemma is the m.p. analogue of  Th. 2.5 in \cite{EH}. 

\begin{lem}\label{l27/11}
Let $E\subset A$ be an $n$-dimensional operator system, $C$ a unital $C^*$-algebra.
Let  $0<\vp<1/2n$. For any self-adjoint unital map  $u: E \to C$  with $\|u\|_{mb}\le 1+\vp$,
there is a unital maximally positive 
(u.m.p. in short) map $v: E \to C$ such that $\|u-v\|_{mb}\le 8n\vp$.\\
Here  $u$ self-adjoint  means that $u(x)=u(x^*)^*$ for any $x\in E$.
\end{lem}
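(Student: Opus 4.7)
The plan is to imitate the Effros--Haagerup perturbation argument \cite[Th.~2.5]{EH} in the max-tensor / maximally positive framework, substituting Theorem~\ref{ext} and the Haagerup decomposition of the dec-norm for the Arveson extension and Wittstock decomposition used in the cb/c.p.~case.

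First, since $\|u\|_{mb}\le 1+\vp$, Theorem~\ref{ext} gives an extension $\tilde u\colon A\to C^{**}$ of $i_Cu$ with $\|\tilde u\|_{dec}\le 1+\vp$; symmetrizing via $\tilde u\mapsto \tfrac12(\tilde u+\tilde u_{*})$ (using Remark~\ref{rw2} plus the self-adjointness of $u$) I may take $\tilde u$ self-adjoint. The self-adjoint characterization of the dec-norm then produces c.p.~maps $w_+,w_-\colon A\to C^{**}$ with $\tilde u=w_+-w_-$ and $\|w_++w_-\|\le 1+\vp$. Evaluating at $1$: letting $b:=w_-(1)\ge 0$, one has $w_+(1)=1+b$ and $\|1+2b\|\le 1+\vp$, so $0\le b\le (\vp/2)\cdot 1$. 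I then set $T:=(1+b)^{-1/2}\in C^{**}$ and define $\tilde v\colon A\to C^{**}$ by $\tilde v(x):=Tw_+(x)T$. This $\tilde v$ is c.p., unital, and defined on all of $A$, so by Remark~\ref{rw} it is unital maximally positive with $\|\tilde v\|_{mb}=1$.

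Next I estimate $\|\tilde v-\tilde u\|_{dec}$ by writing $\tilde v-\tilde u=(\tilde v-w_+)+w_-$: the piece $w_-$ is c.p.~with $\|w_-\|_{dec}=\|b\|\le\vp/2$, and with $s:=T-1$ (self-adjoint, $\|s\|\le\|b\|/2\le\vp/4$) the piece $\tilde v-w_+=sw_+(\cdot)+w_+(\cdot)s+sw_+(\cdot)s$ can be expressed as a difference of c.p.~maps via polarization identities such as
\[
sw_+(x)+w_+(x)s=\tfrac12\bigl((1+s)w_+(x)(1+s)-(1-s)w_+(x)(1-s)\bigr),
\]
each summand being a conjugation of the c.p.~map $w_+$ by a bounded self-adjoint element of $C^{**}$. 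Straightforward bookkeeping yields $\|\tilde v-\tilde u\|_{dec}=O(\vp)$, hence (by Remark~\ref{r22}) $\|\tilde v|_E - i_C u\|_{mb}=O(\vp)$.

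Finally, I must pass from the $C^{**}$-valued map $\tilde v|_E$ to a unital maximally positive map $v\colon E\to C$. Choosing a self-adjoint basis $\{1,e_1,\dots,e_{n-1}\}$ of $E$ with $\|e_i\|\le 1$, I would approximate each $\tilde v(e_i)\in C^{**}$ by self-adjoint elements of $C$ via Kaplansky density, together with the compatibility of the cones $(D\otimes C)^+_{\max}\subset (D\otimes C^{**})^+_{\max}$ from Remark~\ref{r27/11}; assembling these produces $v\colon E\to C$, and the $n$ coordinate-wise approximations contribute the total error $O(n\vp)$ in mb-norm, giving the announced bound $\|u-v\|_{mb}\le 8n\vp$. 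The \emph{main obstacle} is this final descent step: arbitrary unital c.p.~maps $E\to C^{**}$ need not be point-norm limits of unital c.p.~maps $E\to C$, so the finite-dimensional approximation must be engineered to preserve unital maximal positivity, and this is precisely where the dimension factor $n$ in the constant $8n\vp$ enters.
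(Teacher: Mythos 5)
Your first half follows the paper's route faithfully: extending via Theorem~\ref{ext}, symmetrizing, decomposing $\tilde u=w_+-w_-$ with $\|w_++w_-\|\le 1+\vp$, and extracting $\|w_-(1)\|\le \vp/2$ is exactly what the paper does. But you then diverge by renormalizing $w_+$ into $\tilde v(x)=Tw_+(x)T$, and this is where the argument breaks. The resulting u.c.p.\ map $\tilde v|_E$ genuinely takes values in $C^{**}$, and your proposed descent to $C$ via Kaplansky density cannot work: Kaplansky density gives strong$^*$ (or weak$^*$) approximation of elements of $C^{**}$ by elements of $C$, never norm approximation, while the mb-norm dominates the operator norm, so a coordinate-wise ``approximation'' of $\tilde v(e_i)$ by elements of $C$ with mb-control is not available. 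Even granting that each $\tilde v(e_i)$ happens to be norm-close to $C$ (because it is close to $u(e_i)$), replacing the values by nearby elements of $C$ destroys maximal positivity, and restoring it while staying $C$-valued and unital is precisely the problem the lemma is asking you to solve --- the descent step is circular.

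The paper's proof avoids this obstruction by a structural cancellation that your renormalization throws away. Instead of normalizing $w_+$, one invokes the Effros--Haagerup domination lemma: since $\|w_-(1)\|\le\vp/2$ and $\dim E=n$, there is $f\in E^*$ with $\|f\|\le 2n\vp$ such that $x\mapsto f(x)1-w_-(x)$ is in $MP(E,C^{**})$. Adding this to $w_+|_E$ gives $v_0={w_+}|_E+(f(\cdot)1-w_-|_E)=\tilde u|_E+f(\cdot)1=u+f(\cdot)1$: the two $C^{**}$-valued c.p.\ pieces cancel, leaving a map that \emph{manifestly} takes values in $C$ because $u$ does and $f(\cdot)1\in\CC 1\subset C$. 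One then applies \eqref{27/11} to upgrade $v_0\in MP(E,C^{**})$ to $v_0\in MP(E,C)$, and only at the very end renormalizes $v_0(1)^{-1/2}v_0(\cdot)v_0(1)^{-1/2}$, which stays inside $C$. Note also that the dimension factor in $8n\vp$ enters through the bound $\|f\|\le 2n\vp$ in the domination lemma, not through any coordinate-wise approximation as you suggest. Your estimate $\|\tilde v-\tilde u\|_{dec}=O(\vp)$ is fine as far as it goes, but the map it produces lives in the wrong algebra.
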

\begin{proof}[Sketch] The proof is essentially the same as that
of the corresponding statement for the c.b. norm and c.p. maps
appearing in \cite[Th. 2.5]{EH} and reproduced in \cite[Th. 2.28]{P6}.
However we have to use the ``m.p. order" instead of the c.p. one.
The only notable difference is  
that one should use Theorem \ref{ext} instead of the injectivity of $B(H)$.
Using the latter we can find a self-adjoint extension $\tilde u: A \to C^{**}$
with $\|  \tilde u\|_{dec}\le 1+\vp$. We can write
 $\tilde u= v_1-v_2$ with $v_1$ and $v_2$ in $MP(A,C^{**})$ such that
 $\|v_1+v_2\|\le 1+\vp$. Since $u(1)=1$ we have
 $\|  1 + 2 v_2(1)\|=\|(v_1+v_2)(1)\|\le 1+\vp$, and hence
 $\|v_2(1)\|\le \vp/2$. Arguing as in \cite[Th. 2.5]{EH} or \cite[Th. 2.28]{P6}
 there is $f\in E^*$ with $\|f\|\le 2n\vp$ such that
 the mapping $w_2: x\mapsto f(x)1-v_2(x)$ is in $MP(E,C^{**})$.
 We then set $v_0={v_1}_{|E}+w_2= u+  f(\cdot)1$.
 Note that $v_0(E)\subset C$. A priori $v_0\in MP(E,C^{**})$, but
  we actually have
 $v_0\in MP(E,C)$ by
 \eqref{27/11}.
 Moreover $\|v_0 -u\|_{mb} \le   2n\vp$.
 In particular $\|v_0(1) -1\|  \le   2n\vp <1$, which shows that $v_0(1)$
 is invertible and close to $1$ when $\vp$ is small.
 The rest of the proof is as in \cite[Th. 2.5]{EH} or \cite[Th. 2.28]{P6}.
\end{proof}

The following variant is immediate:
\begin{lem}\label{v26}
Let $E\subset A$ be an $n$-dimensional operator system, $C$ a unital $C^*$-algebra.
  For any self-adjoint map  $u: E \to C$
  such that $\|u(1)-1\|<\vp$ and  $\|u\|_{mb}\le 1+\vp$,
there is a unital map $v\in MP( E , C)$ such that $\|u-v\|_{mb}\le  f_E(\vp)$,
where $f_E$ is a function of $\vp\in (0,1)$ such that $\lim_{\vp\to 0} f_E(\vp)=0$.
\end{lem}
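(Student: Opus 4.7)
The plan is to reduce this lemma to the unital case (Lemma \ref{l27/11}) by first perturbing $u$ to a genuinely unital self-adjoint map $u_1$ whose $mb$-norm is still close to $1$, at a cost that tends to $0$ with $\vp$.

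First I would choose an auxiliary state. Pick any state $\tilde\phi$ on $A$ and let $\phi=\tilde\phi_{|E}$. Since $\tilde\phi$ is c.p., hence m.p., Remark \ref{rw} gives $\|\tilde\phi\|_{mb}=1$, and by Remark \ref{rc} we have $\|\phi\|_{mb}\le 1$. Define
$$u_1: E\to C,\qquad u_1(x)=u(x)+\phi(x)\bigl(1-u(1)\bigr).$$
Then $u_1(1)=u(1)+(1-u(1))=1$, so $u_1$ is unital. Because $u$ is self-adjoint, $u(1)$ is self-adjoint and so is $1-u(1)$; combined with $\phi(x^*)=\overline{\phi(x)}$, a direct check yields $u_1(x^*)=u_1(x)^*$, so $u_1$ is self-adjoint.

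Next I would estimate $\|u_1-u\|_{mb}$. The map $u_1-u$ factors as
$$E\xrightarrow{\ \phi\ }\mathbb{C}\xrightarrow{\ z\mapsto z(1-u(1))\ }C,$$
so by the submultiplicativity of $\|\cdot\|_{mb}$ under composition (Remark \ref{rc}),
$$\|u_1-u\|_{mb}\le \|\phi\|_{mb}\,\|1-u(1)\|< \vp.$$
Consequently $\|u_1\|_{mb}\le \|u\|_{mb}+\|u_1-u\|_{mb}< 1+2\vp$.

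Now assume $\vp<1/(4n)$ so that $2\vp<1/(2n)$. Applying Lemma \ref{l27/11} to the unital self-adjoint map $u_1$ with parameter $2\vp$ produces a unital m.p.\ map $v:E\to C$ with $\|u_1-v\|_{mb}\le 8n\cdot 2\vp=16n\vp$. The triangle inequality then gives
$$\|u-v\|_{mb}\le \|u-u_1\|_{mb}+\|u_1-v\|_{mb}< (16n+1)\vp.$$
Thus $f_E(\vp):=(16n+1)\vp$ works for $\vp\in(0,1/(4n))$, and we may extend $f_E$ by any constant on $[1/(4n),1)$ to obtain a function on $(0,1)$ with $\lim_{\vp\to 0}f_E(\vp)=0$. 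There is no real obstacle here; the only thing to be careful about is checking that the affine correction $\phi(\cdot)(1-u(1))$ preserves self-adjointness and has controllable $mb$-norm, which is handled by using a state on the ambient $A$ rather than an arbitrary norm-one functional on $E$.
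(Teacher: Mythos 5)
Your proof is correct and follows essentially the same strategy as the paper's: perturb $u$ to a unital self-adjoint map whose $mb$-norm is still $1+O(\vp)$, then invoke Lemma \ref{l27/11}. The only difference is in the normalization step — the paper uses the multiplicative correction $u'(\cdot)=u(1)^{-1/2}u(\cdot)u(1)^{-1/2}$, while you use the additive rank-one correction $\phi(\cdot)\bigl(1-u(1)\bigr)$ — and both are easily seen to cost only $o(1)$ in $mb$-norm as $\vp\to 0$, so either yields the required $f_E$.
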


\begin{proof}[Sketch] Let $u'(\cdot)=u(1)^{-1/2} u(\cdot) u(1)^{-1/2} $.
Then $\|u-u'\| \le f'(\vp)$ with $\lim_{\vp\to 0} f'(\vp)=0$.
We may apply Lemma \ref{l27/11} to $u'$.
 \end{proof}

 \section{Arveson's principle}\label{arvp}
 
 To tackle global lifting problems a principle due to Arveson
 has proved very useful.
 It asserts roughly that in the separable case pointwise limits
 of suitably liftable maps are liftables.
    Its general  form
   can be stated as follows.
   Let $E$ be a separable operator space,
   $C$ unital $C^*$-algebra, $\cl I\subset C$ an ideal.
 
A bounded subset of $\cl F\subset  B(E,C)$ will be called admissible if
   for any pair $f,g$ in $\cl F$
   and any $\sigma \in C_+$ with $\|\sigma\| \le 1$ 
   the mapping
   $$x\mapsto \sigma^{1/2} f(x)  \sigma^{1/2}+ (1-\sigma)^{1/2} g(x) (1-\sigma)^{1/2} $$
   belongs to $\cl F$. This implies  that $\cl F$ is   convex.
   
   Let  $q: C \to C/\cl I$ denote the quotient map and let
   $$q(\cl F)=\{ qf\mid f\in \cl F\}.$$

    Then Arveson's principle 
    (see \cite[p. 351]{[Ar4]}) 
    can be stated like this:

    \begin{thm}[Arveson's principle]
    Assume $E$ separable and $\cl F$ admissible. 
    For the topology of pointwise convergence
   on $E$ we have
   $$\ovl{q(\cl F) } =q(\ovl{ \cl F  } ).$$
   Actually we do not even need to assume $\cl F$ bounded
   if we restrict to the pointwise convergence on a countable
   subset of $E$.
   \end{thm}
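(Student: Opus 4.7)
The plan is to prove the nontrivial inclusion $\overline{q(\cl F)} \subseteq q(\overline{\cl F})$; the reverse inclusion is immediate from the norm-continuity of $q$. Fix $\phi$ in the pointwise closure of $q(\cl F)$ and a countable subset $(x_m)_{m \ge 1} \subset E$ (dense in $E$ in the bounded case, the prescribed countable set otherwise). I will construct inductively a sequence $f_n \in \cl F$ satisfying $\|qf_n(x_m) - \phi(x_m)\| < \vp_n$ for $m \le n$ and $\|f_n(x_m) - f_{n-1}(x_m)\| < \delta_n$ for $m < n$, with $(\vp_n)$ and $(\delta_n)$ summable. Then $(f_n(x_m))_n$ is Cauchy for each $m$; its pointwise limit $f$ lies in $\overline{\cl F}$ and satisfies $qf(x_m) = \phi(x_m)$ for every $m$. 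In the bounded case the $f_n$'s are uniformly bounded, so by density of $(x_m)$ the map $f$ extends uniquely to a bounded linear map on all of $E$ and $qf = \phi$ on $E$.

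The central device is a quasi-central approximate unit $(e_N)_{N \ge 1}$ in $\cl I$: $e_N \in \cl I_+$, $\|e_N\| \le 1$, with $\|e_N a - a\| \to 0$ for every $a \in \cl I$ and $\|[e_N, c]\| \to 0$ for every $c \in C$. The key observation is
$$\limsup_N \|e_N^{1/2} a e_N^{1/2} - a\| \le 2\|q(a)\| \qquad (a \in C),$$
proved by decomposing $a = i + r$ with $i \in \cl I$ and $\|r\| < \|q(a)\| + \vp$, then using $\|e_N^{1/2} i e_N^{1/2} - i\| \to 0$.

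For the inductive step, given $f_{n-1} \in \cl F$ satisfying the previous bound, pick $g_n \in \cl F$ with $\|qg_n(x_m) - \phi(x_m)\| < \vp_n$ for $m \le n$, and apply admissibility with $\sigma = 1 - e_N \in C_+$, $\|\sigma\| \le 1$, to form
$$f_n(x) := (1-e_N)^{1/2} g_n(x) (1-e_N)^{1/2} + e_N^{1/2} f_{n-1}(x) e_N^{1/2} \in \cl F.$$
Since $q(e_N) = 0$ gives $q((1-e_N)^{1/2}) = 1$ and $q(e_N^{1/2}) = 0$, we have $qf_n = qg_n$ at every $x_m$, so the $\vp_n$-bound is automatic. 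Setting $a_m := f_{n-1}(x_m) - g_n(x_m)$,
$$f_n(x_m) - f_{n-1}(x_m) = \bigl[(1-e_N)^{1/2} g_n(x_m)(1-e_N)^{1/2} + e_N^{1/2} g_n(x_m) e_N^{1/2} - g_n(x_m)\bigr] + \bigl[e_N^{1/2} a_m e_N^{1/2} - a_m\bigr].$$
Using $(1-e_N) + e_N = 1$, the first bracket equals $-(1-e_N)^{1/2}[(1-e_N)^{1/2}, g_n(x_m)] - e_N^{1/2}[e_N^{1/2}, g_n(x_m)]$, which tends to $0$ in $N$ by quasi-centrality applied via functional calculus to $e_N$; the second is eventually at most $2\|q(a_m)\| + \vp \le 2(\vp_{n-1} + \vp_n) + \vp$. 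Choosing $\vp_n = 2^{-n}$ and $N = N(n)$ large enough yields $\delta_n = O(2^{-n})$, summable.

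The main obstacle is precisely this $\delta_n$-estimate: one must balance the commutator contribution (controlled by quasi-centrality at the finitely many values $g_n(x_m), f_{n-1}(x_m)$ for $m \le n$) against the approximate-identity contribution (controlled by the inductive quotient error $\|q(a_m)\|$). Once that bound is in hand, the Cauchy argument of the first paragraph closes the proof, yielding $\phi = qf \in q(\overline{\cl F})$.
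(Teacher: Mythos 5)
Your proof is correct and is essentially the standard Arveson argument that the paper itself does not reproduce but only cites (Arveson, Davidson, Pisier): a quasicentral approximate unit combined with the admissibility patching $x\mapsto (1-e)^{1/2}g_n(x)(1-e)^{1/2}+e^{1/2}f_{n-1}(x)e^{1/2}$, the commutator estimate via continuous functional calculus, and the key bound $\limsup\|e^{1/2}ae^{1/2}-a\|\le 2\|q(a)\|$, all of which you verify correctly. The only cosmetic point is that a quasicentral approximate unit is in general a net rather than a sequence indexed by $N$, but since each inductive step imposes only finitely many conditions this changes nothing.
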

   One can verify this assertion by examining 
     the presentations \cite[p. 266]{[Da2]}  or \cite[p. 46 and p. 425]{P4}
     or \cite[Th. 9.46]{P6}.

    The classical admissible classes 
     are contractions,  complete contractions and, when $E$ is an operator system, positive contractions or
     completely positive (c.p. in short)  contractions.
      In the unital case,  unital positive or 
     unital completely positive (u.c.p. in short) maps form admissible classes.
     Let $f,g\in B_{{MB}(E,C)}$. Then it is easy to see on one hand that
     the map $x \mapsto (f(x),g(x))$ is in $B_{{MB}(E,C\oplus C)}$.
     On the other hand the map $\psi: C\oplus C \to C$ defined
     by $v(a,b)=   \sigma^{1/2} a  \sigma^{1/2}+ (1-\sigma)^{1/2} b (1-\sigma)^{1/2} $ is unital c.p.  and hence  in $B_{{MB}(C\oplus C,C )}$ (see Remark \ref{rw}). This shows (with Remark \ref{rc}) that the unit ball of ${MB}(E,C)$ is admissible.

     In the rest of this section,
     we record a few elementary facts.
     
   \begin{lem}\label{wku} Let $A$ be a unital $C^*$-algebra. Let $E\subset A$ be an operator system.
   Consider a unital map $u: E \to C/\cl I$ ($C/\cl I$ any quotient
   $C^*$-algebra).
   Let $v\in MB( E , C)$ (resp.   $v\in SB( E , C)$, rresp. $v\in CB( E , C)$)
   be a  lifting of $u$. Then for any $\vp>0$ there is a 
   unital lifting $v'\in MB( E , C)$ (resp.   $v'\in SB( E , C)$, rresp. $v'\in CB( E , C)$)
   with $\|v'\|_{mb} \le (1+\vp)\|v\|_{mb}$  (resp.  $\|v'\|_{sb} \le  (1+\vp)\|v\|_{sb}$,
    rresp. $\|v'\|_{cb} \le  (1+\vp)\|v\|_{cb}$).\\
   If $v$ is merely assumed bounded, we also have 
   $\|v'\|  \le (1+\vp)\|v\| $.
   \end{lem}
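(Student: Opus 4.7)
The plan is to perturb $v$ into an exactly unital lifting by means of a quasi-central approximate unit of $\cl I$. After unitising $C$ if necessary, I would set $j := v(1_E) - 1_C\in\cl I$, fix a state $\phi\in E^*$, and let $(e_\alpha)\subset\cl I$ be a quasi-central approximate unit with $0\le e_\alpha\le 1$. I would then consider the ``almost unital'' lifting
$$w_\alpha(x) := (1-e_\alpha)^{1/2}\,v(x)\,(1-e_\alpha)^{1/2} + \phi(x)\,e_\alpha,$$
which satisfies $q\circ w_\alpha = u$ (since $e_\alpha\in\cl I$) and $w_\alpha(1_E)-1_C = (1-e_\alpha)^{1/2}\,j\,(1-e_\alpha)^{1/2}$; the latter tends to $0$ in norm, because $j\in\cl I$ forces $(1-e_\alpha)j\to 0$.

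The norm estimate for $w_\alpha$ will come from the factorization $w_\alpha = \Psi_\alpha\circ\Phi$, where $\Phi:E\to C\oplus C$ sends $x\mapsto(v(x),\phi(x)1_C)$ and $\Psi_\alpha:C\oplus C\to C$ is the u.c.p.\ map $(a,b)\mapsto(1-e_\alpha)^{1/2}a(1-e_\alpha)^{1/2}+e_\alpha^{1/2}b\,e_\alpha^{1/2}$. Being u.c.p., $\Psi_\alpha$ has $\|\Psi_\alpha\|_{mb}=\|\Psi_\alpha\|_{cb}=\|\Psi_\alpha\|=1$ (by Remark~\ref{rw}), and likewise $\|\Psi_\alpha\|_{sb}=1$, since $Id\otimes\Psi_\alpha$ is c.p.\ and hence contractive from $\ell_\infty(\{D_i\})\otimes_{\max}(C\oplus C)$ to $\ell_\infty(\{D_i\})\otimes_{\max}C$. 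Using $D\otimes_{\max}(C\oplus C)=(D\otimes_{\max}C)\oplus(D\otimes_{\max}C)$, the norm of $\Phi$ in each class equals $\max(\|v\|,\|\phi\cdot 1_C\|)$; since $x\mapsto\phi(x)1_C$ is u.c.p.\ one has $\|\phi\cdot 1_C\|\le 1$ in all four norms, and $\|v\|\ge\|v(1_E)\|\ge\|u(1_E)\|=1$ in all four norms. Thus $\|w_\alpha\|\le\|v\|$ in each of the $mb$, $sb$, $cb$, and operator norms.

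I would then correct $w_\alpha$ to be exactly unital by setting
$$v'(x) := w_\alpha(x) - \phi(x)\,\bigl(w_\alpha(1_E) - 1_C\bigr).$$
By construction $v'(1_E)=1_C$, and since $w_\alpha(1_E)-1_C\in\cl I$ we still have $q\circ v'=u$. The difference $v'-w_\alpha$ is a rank-one map $\phi(\cdot)\,c_\alpha$ with $c_\alpha:=w_\alpha(1_E)-1_C$; factoring through $\bb C$, its $mb$-, $sb$-, $cb$-, and operator norm are all bounded by $\|\phi\|\cdot\|c_\alpha\|\to 0$. Since $\|v\|\ge 1$ in each of the four norms, choosing $\alpha$ sufficiently large makes this correction $\le\vp\|v\|$, giving the desired $\|v'\|\le(1+\vp)\|v\|$ in each case.

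The main technical subtlety is the $sb$ bound, because the codomain of the $sb$ norm uses the full max-norm of $\ell_\infty(\{D_i\})\otimes C$, not the sup of per-coordinate $D_i\otimes_{\max}C$-norms. The key point is that the second coordinate $(Id\otimes(\phi\cdot 1_C))(t)=(Id\otimes\phi)(t)\otimes 1_C$ lies in the copy of $\ell_\infty(\{D_i\})$ inside $\ell_\infty(\{D_i\})\otimes_{\max}C$ with norm $\le 1$ (because $\phi$ is a state, so each $(Id\otimes\phi)(t_i)$ has $D_i$-norm bounded by $\|t_i\|_{D_i\otimes_{\max}E}$), so that $\Psi_\alpha$ can be applied in the max-tensor world to the pair $(s_1,s_2)\in\ell_\infty(\{D_i\})\otimes_{\max}(C\oplus C)$ with output max-norm bounded by $\max(\|s_1\|,\|s_2\|)\le\|v\|_{sb}$.
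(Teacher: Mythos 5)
Your proof is correct and follows essentially the same route as the paper's: a quasicentral approximate unit $(e_\alpha)\subset\cl I$, the almost-unital lifting $x\mapsto (1-e_\alpha)^{1/2}v(x)(1-e_\alpha)^{1/2}+\phi(x)e_\alpha$ obtained by factoring through a u.c.p.\ map on a direct sum (which controls the $mb$-, $sb$-, $cb$- and operator norms exactly as in the paper), followed by a small correction to achieve exact unitality. The only (harmless) divergence is the last step: the paper multiplies by $T'_\alpha(1)^{-1}$ once $T'_\alpha(1)$ is invertible and close to $1$, whereas you subtract the rank-one map $\phi(\cdot)\bigl(w_\alpha(1)-1\bigr)$, whose norm in all four classes tends to $0$; both corrections yield the same $(1+\vp)$ bound.
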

   \begin{proof}
   Let $(\sigma_\a)$ be a quasicentral approximate unit in $\cl I$ in the sense of \cite{[Ar4]} (see also e.g.
\cite{[Da2]} or \cite[p. 454]{P6}).
Let $T_\a:C \oplus \CC \to C$ be the unital c.p. map
defined by $$T_\a(x, c)=\big(\begin{matrix} (1-\sigma_\a)^{1/2} \sigma_\a^{1/2} \end{matrix}\big) \left(\begin{matrix} x \ \ 0 \\ 0 \ \ c
  \end{matrix}\right) \left(\begin{matrix} (1-\sigma_\a)^{1/2} \\\sigma_\a^{1/2}\end{matrix}\right)
=(1-\sigma_\a)^{1/2} x(1-\sigma_\a)^{1/2} + c\sigma_\a.$$
Since $v(1)-1\in \cl I$ we have 
\begin{equation}  \label{27/8b}
 \|(1-\sigma_\a)^{1/2} [v(1)-1](1-\sigma_\a)^{1/2}\|\to 0.\end{equation}
Let $f: E \to \CC$ be a state, i.e. a positive linear form such that $f(1)=1$.
Note that   $\|f\|_{sb}= \|f\|_{mb}=\|f\|=1$. Also $\|v\|\ge \|u\|\ge 1$.
The map $\psi:  E \to C \oplus \CC$ defined by
$\psi(x)= (v(x) , f(x))$ clearly  satisfies $\|\psi\|_{mb}\le \|v\|_{mb}$.
Let $T_\a'(x)= T_\a\psi(x)$. 
Then  $\|T_\a'\|_{mb}\le \| v\|_{mb}$ (see Remarks \ref{rc} and \ref{rw}),  $T_\a': E \to C$ lifts $u $
  and  by \eqref{27/8b} we have
$\|T_\a'(1)-1\|\to 0$, so that going far enough in the net
we can ensure that $T_\a'(1)$ is   invertible in $C$.
We then define $v': E \to C$ by 
$v'(x)= {T_\a'(1)}^{-1} T_\a'(x)  $ (we could use $x\mapsto {T_\a'(1)}^{-1/2} T_\a'(x){T_\a'(1)}^{-1/2} $).
Now $v'$ is unital  and 
$\| v'\|_{mb} \le \|{T_\a'(1)}^{-1}\|  \|T_\a'\|_{mb}\le \|{T_\a'(1)}^{-1}\|  \| v\|_{mb}  $.
Choosing $\a$ ``large" enough so that 
$\|{T_\a'(1)}^{-1}\|  <1+\vp$, we obtain the desired bound
for the mb-case. The other cases are identical.
  \end{proof}
 
  \def\b{\beta}

 Let $I$ be a directed  set.
 Assume $x_i\in B(H)$ for all $i\in I$ and $x\in B(H)$. 
 Recall that (by definition) $x_i$ tends to $x$ for the strong* operator topology (in short 
 $x=\text{sot*}-\lim  x_i$) if $x_i h \to xh$ and $x^*_i h \to x^*h$ for any $h\in H$.
 
Let $M\subset B(H)$
be a von Neumann subalgebra of $B(H)$.
Assume that there is a 
$C^*$-subalgebra $C\subset M$
such that $C''=M$.
It is well known that the unit ball of $M$
is the closure of that of $C$ for the strong* topology (see \cite[Th. 4.8. p. 82]{Tak}).
This implies 
  the following well known and elementary fact representing  $M$ as a quotient
of a natural 
 $C^*$-subalgebra of $\ell_\infty(\{C\})$. 
 
\begin{lem}\label{11/10} In the preceding situation, for a suitable directed index set $I$,    
with which we set $C_i=C$ for all $i\in I$, 
    there is a $C^*$-subalgebra $L \subset \ell_\infty(\{C_i\mid i\in I\})$
     and a surjective     $*$-homomorphism $Q: L
    \to M$ such that for any $(x_i)\in L$ we have
    $Q( (x_i))=  \text{sot*}-\lim  x_i$. \\
    If $C$ is unital, we can get $L$ and $Q$ unital as well.
\end{lem}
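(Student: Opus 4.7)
The plan is to take $I$ to be the directed set of pairs $i=(F,\vp)$ where $F\subset H$ is a finite subset and $\vp>0$, ordered by $(F,\vp)\le (F',\vp')$ iff $F\subset F'$ and $\vp'\le \vp$, set $C_i=C$ for all $i\in I$, and define
$$L=\{(x_i)\in \ell_\infty(\{C_i\mid i\in I\})\mid \text{sot*-}\lim\nl_i x_i\ \text{exists in }B(H)\},\qquad Q((x_i))=\text{sot*-}\lim\nl_i x_i.$$
Since $C\subset M$ and $M$ is sot-closed (hence sot*-closed), every such limit automatically lies in $M$, so $Q$ takes values in $M$.

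Next I would verify that $L$ is a norm-closed $*$-subalgebra of $\ell_\infty(\{C_i\})$ and that $Q$ is a $*$-homomorphism. Linearity and $*$-preservation are immediate from the definition of sot*-convergence. Multiplicativity rests on the standard fact that on norm-bounded subsets of $B(H)$ the product is jointly sot*-continuous, which applies since every $(x_i)\in L$ is norm-bounded by virtue of lying in $\ell_\infty(\{C_i\})$. Norm-closedness is a routine $3\vp$-argument: if $(x^{(n)})\subset L$ converges in $\ell_\infty$-norm to $(y_i)$, then $y^{(n)}:=Q(x^{(n)})$ is norm-Cauchy in $M$ with some limit $y\in M$, and
$$\|y_i h-yh\|\le \|y_i h-x_i^{(n)} h\|+\|x_i^{(n)} h-y^{(n)} h\|+\|y^{(n)} h-yh\|,$$
together with its adjoint analogue, shows $y=\text{sot*-}\lim y_i$, whence $(y_i)\in L$ and $Q((y_i))=y$.

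Surjectivity of $Q$ is the heart of the matter and is where the Kaplansky density theorem (already invoked just before the lemma) enters. Given $m\in M$, for each $i=(F,\vp)\in I$ Kaplansky furnishes $x_i\in C$ with $\|x_i\|\le \|m\|$ such that $\|(x_i-m)h\|<\vp$ and $\|(x_i-m)^*h\|<\vp$ for every $h\in F$; selecting one such $x_i$ for each $i$ yields a bounded tuple $(x_i)\in \ell_\infty(\{C_i\})$ with $\text{sot*-}\lim x_i=m$, hence $(x_i)\in L$ and $Q((x_i))=m$. For the unital addendum, if $1\in C$ then $1=1_M$, the constant tuple $(1)_{i\in I}$ lies in $L$ and maps to $1_M$, so $L$ contains the unit of $\ell_\infty(\{C_i\})$ and $Q$ is unital. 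I do not anticipate a genuine obstacle: the two places that require mild care are the joint sot*-continuity of multiplication on bounded sets (which is why the ambient $\ell_\infty$-bound matters, and why the directed order on $I$ is rigged so that $\vp\to 0$) and the norm estimate $\|x_i\|\le\|m\|$ in Kaplansky, without which the approximating tuple would not even lie in $\ell_\infty(\{C_i\})$.
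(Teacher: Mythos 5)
Your proposal is correct and follows essentially the same route as the paper: the paper also takes $I$ to be (in effect) the directed set of sot*-neighborhoods, defines $L$ as the tuples whose sot*-limit exists, sets $Q$ equal to that limit, and gets surjectivity from the Kaplansky-type density of $B_C$ in $B_M$ quoted just before the lemma. You merely spell out the algebra/closedness verifications that the paper leaves implicit.
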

\begin{proof} Choose a directed set $I$ so that
for any $x\in B_M$ there is $(x_i)_{i\in I} \in \prod_{i\in I} B_{C_i}$ (recall  $C_i=C$ for all ${i\in I}$)
such that $x=\text{sot*}-\lim  x_i$. For instance, the set of neighborhoods
of $0$ for the {sot*}-topology can play this role. Let $L\subset \ell_\infty(\{C_i\})$
denote the unital $C^*$-subalgebra formed of the elements $(x_i)_{i\in I}$ in $L$ such that
$\text{sot*}-\lim  x_i$ exists. We then set
  $Q(x)=  \text{sot*}-\lim  x_i$ for any $x=(x_i)\in L$. The last assertion is obvious.
\end{proof}

 Let $X\subset A$ be a separable subspace of a $C^*$-algebra $A$.
  For any $C^*$-algebra $C$ we give ourselves
   an admissible class of mappings $\cl F(X,C)$ that we always assume
   closed for pointwise convergence.
   We will say that $X\subset A$ has the  $\cl F$-lifting property
   if for any $u\in \cl F( X , C/\cl I)$   there is a lifting
   $\hat u$ in $\cl F( X ,C)$.
   
  \begin{rem}\label{2/12}  
  In    the sequel, we will assume 
  that $\cl F$ is admissible and  in addition that for any $u\in \cl F( X , C)$
  and any $*$-homomorphism $\pi: C \to D$ between $C^*$-algebras
 $\pi\circ u \in \cl F(X,D)$. When $\cl F$ is formed of unital maps
 ($X$ is then an operator system and $C$ a unital $C^*$-algebra),
 we assume moreover that $\pi\circ u \in \cl F(X,D)$ for any u.c.p.
 map $\pi: C \to D$.
 \end{rem}
 \begin{rem}\label{19/10}[Examples] Examples of such classes are those formed of maps that are
 contractive, positive and contractive, $n$-contractive, $n$-positive and $n$-contractive,
 completely contractive or  c.c.p.  
   When dealing with positive or c.p. maps,   we  
 assume that $X$ is an operator system.
 Then
 we may intersect the latter classes with that of unital ones.
\end{rem}

To tackle liftings within biduals we will use the following classical fact (for which a proof can be found
  e.g.  in \cite[p.465]{P6}): for any 
$C^*$-algebra $C$ and any ideal $\cl I\subset C$ with which we can form the
quotient $C^*$-algebra    $C/\cl I$,  
we have a canonical isomorphism:

\begin{equation}  \label{11/9/4}
C^{**} \simeq (C/\cl I)^{**} \oplus  {\cl I}^{**}\end{equation}
 and hence an isomorphism
$$(C/\cl I)^{**}\simeq  C^{**}/{\cl I}^{**}. $$

The next two statements (which follow easily from Arveson's principle) show that the global (resp. local) lifting property is equivalent
to some sort of  ``global  (resp. local) reflexivity principle".

 \begin{thm}\label{9/10}    Let $X\subset A$ and $\cl F$  be as in Remark \ref{2/12}. Then the following are equivalent:
    \begin{itemize}
 \item[{\rm (i)}] The space $X $ has the $\cl F$-LP.
      \item[{\rm (ii)}] For any von Neumann algebra $M$, any $C\subset M$
    such that $C''=M$ and any $u\in \cl F( X , M)$
    there is a net of maps $u_i \in \cl F( X , C)$ tending pointwise sot*
  to $u$.
   \item[{\rm (iii)}]  For any $C$ and any  $u\in \cl F( X , C^{**})$
  there is a net of maps $u_i \in \cl F( X , C)$ tending pointwise  to $u$
   for the weak*  (i.e. $\sigma(C^{**},C^{*})$) topology.

\end{itemize}
    \end{thm}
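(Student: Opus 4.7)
The plan is to establish the cycle (i) $\Rightarrow$ (ii) $\Rightarrow$ (iii) $\Rightarrow$ (i), with the real content lying in the last implication.

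For (i) $\Rightarrow$ (ii), apply Lemma \ref{11/10} to realize $M$ as $L/\ker Q$, where $L$ is a $C^*$-subalgebra of $\ell_\infty(\{C_i\mid i\in I\})$ (with each $C_i=C$) and the surjection $Q\colon L\to M$ is given by $Q((x_i))=\text{sot*}\text{-}\lim x_i$. Given $u\in\cl F(X,M)$, the $\cl F$-LP produces a lift $\hat u\in\cl F(X,L)$. Composing with the $i$-th coordinate projection $p_i\colon L\to C$ (a $*$-homomorphism, unital in the unital case) and invoking Remark \ref{2/12} yields $u_i:=p_i\hat u\in\cl F(X,C)$; by construction $u_i(x)\to u(x)$ in sot* for every $x\in X$.

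For (ii) $\Rightarrow$ (iii), given $u\in\cl F(X,C^{**})$, embed $C$ via its universal representation into $B(H)$ so that $C''=C^{**}$; (ii) then supplies a net $u_i\in\cl F(X,C)$ with $u_i\to u$ pointwise sot*. Since $\cl F$ is uniformly bounded, sot* convergence on bounded sets implies weak operator convergence, which on bounded subsets of $C^{**}$ coincides with the $\sigma(C^{**},C^*)$-topology. This gives (iii).

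The main step is (iii) $\Rightarrow$ (i). Fix a quotient $q\colon C\to C/\cl I$ and $v\in\cl F(X,C/\cl I)$. Using \eqref{11/9/4}, identify $C^{**}\simeq(C/\cl I)^{**}\oplus\cl I^{**}$; let $\iota\colon(C/\cl I)^{**}\hookrightarrow C^{**}$ be the associated normal $*$-homomorphism and $j\colon C/\cl I\hookrightarrow(C/\cl I)^{**}$ the canonical embedding. By Remark \ref{2/12}, $\tilde v:=\iota\circ j\circ v\in\cl F(X,C^{**})$. Apply (iii) to obtain $v_i\in\cl F(X,C)$ with $v_i(x)\to\tilde v(x)$ weak* for each $x$. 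Since the normal extension $q^{**}\colon C^{**}\to(C/\cl I)^{**}$ is weak*-continuous and satisfies $q^{**}\circ\iota=\mathrm{Id}$, we deduce $qv_i(x)=q^{**}v_i(x)\to jv(x)$ weak* in $(C/\cl I)^{**}$, which restricts to weak convergence of $qv_i(x)$ to $v(x)$ in $C/\cl I$.

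To feed this into Arveson's principle we must upgrade to pointwise norm convergence. Pick a countable dense subset $\{x_n\}\subset X$. Admissibility of $\cl F$ forces convexity; a standard Mazur/diagonal argument then produces finite convex combinations $w_j=\sum_i\alpha_{j,i}v_i\in\cl F(X,C)$ such that $qw_j(x_n)\to v(x_n)$ in norm for each $n$, and uniform boundedness of $\cl F$ extends this to norm convergence at every $x\in X$. Since $\cl F(X,C)$ is assumed closed under pointwise convergence, Arveson's principle applied to the admissible class $\cl F(X,C)$ gives $v\in q(\cl F(X,C))$, i.e.\ a lift of $v$ in $\cl F(X,C)$. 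The main obstacle is precisely this passage from pointwise weak* convergence in the bidual to the pointwise norm convergence demanded by Arveson's principle, handled via Mazur combined with the separability of $X$.
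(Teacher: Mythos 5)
Your proof follows the paper's argument essentially step for step: Lemma \ref{11/10} for (i)$\Rightarrow$(ii), the decomposition \eqref{11/9/4} to produce $\tilde v\in\cl F(X,C^{**})$, then (iii), weak*-continuity of $q^{**}$, Mazur's theorem, and Arveson's principle. The extra detail you supply for (ii)$\Rightarrow$(iii) (universal representation, agreement of the weak operator and $\sigma(C^{**},C^*)$ topologies on bounded sets) is correct; the paper simply calls this step obvious.

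One point you omit, which the paper treats explicitly: when $\cl F$ is a class of \emph{unital} maps (as in the later applications to $\cl F_{ump}$ and $\cl F_{ucp}$), the $*$-homomorphism $\iota\colon (C/\cl I)^{**}\to C^{**}$ coming from \eqref{11/9/4} is \emph{not} unital, since $\iota(1)=1-p$ where $p$ is the unit of $\cl I^{**}$. Hence $\tilde v=\iota\circ j\circ v$ is not unital, and neither clause of Remark \ref{2/12} places it in $\cl F(X,C^{**})$. The paper repairs this by replacing $\iota$ with the u.c.p.\ lifting $\rho_1=\iota(\cdot)+f(\cdot)p$, where $f$ is a state on $(C/\cl I)^{**}$, and then invoking the u.c.p.\ clause of Remark \ref{2/12}; note $q^{**}\circ\rho_1=\mathrm{Id}$ still holds. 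With that one-line correction your argument is complete.
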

    \begin{proof}
     Assume (i). 
    We apply Lemma \ref{11/10}. 
    We have a $C^{*}$-subalgebra $L\subset \ell_\infty(\{C_i\})$ and a surjective  $*$-homomorphism $Q: L
    \to M$ such that for any $(x_i)\in L$ we have $Q( (x_i))=  \text{sot*}-\lim  x_i$ (and a fortiori
    $Q( (x_i))=  \text{weak*}-\lim  x_i$). Let $u\in \cl F( X , M)$ and let
    $\hat u\in \cl F( X , L  )$ be a lifting for $Q$.
    Then $(u_i)_{i\in I}$ such that $\hat u=(u_i)_{i\in I}$ gives us (ii).\\ (ii) $\Rightarrow$ (iii) is obvious. \\
    Assume (iii).  Let $u\in \cl F( X , C/\cl I)$.
    We will use \eqref{11/9/4}.
 The proof can be read on the following diagram. 
 $$\xymatrix{&  & C\ar[d]^{q}  \ar[r] & C^{**}\ar[d]^{q^{**}} \\
   &  X 
   \ar@/^3pc/[urr]^{v} \ar@{-->}[ur]_{v_i}
  \ar[r]_{ u \quad  }  &  C/\I  \ar[r] & [C/\cl I]^{**} \ar@/_2pc/[u] _{\rho}}$$
 By \eqref{11/9/4} we have a lifting  $\rho: (C/\cl I)^{**}\simeq  C^{**}/{\cl I}^{**} \to C^{**}$
which is a $*$-homomorphism.
Let $f$ be a state on $(C/\cl I)^{**}$ and let $p$ denote the unit in ${\cl I}^{**}\subset C^{**}$.
 Then $\rho_1: C^{**}/{\cl I}^{**} \to C^{**}$
defined by $\rho_1 =\rho(\cdot) + f(\cdot) p$ is a u.c.p. lifting
(replacing $\rho$ by $\rho_1$ is needed only when $\cl F$ is formed of unital maps).
 Therefore, recalling Remark \ref{2/12},  we can    find a map $v\in {\cl F}( X , C^{**})$  
such that  $q^{**} v=i_{C/\cl I} u$ (where as usual $i_D: D \to D^{**}$ the canonical inclusion). 
By (iii)  there is a net
$(v_i)$   in   $ {\cl F}( X , C )$  tending weak* to $v$.
Then $qv_i=q^{**} v_i $ tends  pointwise {weak*} to $q^{**}v=u$.
This means $qv_i \to u$ pointwise for the weak topology of $C/\cl I$.
We can then invoke   Mazur's classical theorem 
 to obtain (after passing to suitable convex combinations) a net such that
$qv_i \to u$ pointwise in norm (see e.g. \cite[Rem. A.10]{P6} for details).
By Arveson's principle,  $u$ admits a lifting in $\cl F$, so we obtain (i).
\end{proof}

Let $X\subset A$ be as above. In addition
for any $C$ and any f.d. subspace $E_0\subset X$ we assume given 
another f.d.  $E\supset E_0$ and a class
$\cl F(E,C)$ satisfying the same two assumptions as $\cl F(X,C)$.  
We also assume that $u\in \cl F(X,C)$ implies $u_{|E}\in \cl F(E,C)$.
 We will say that $u: X \to C/\cl I$ is locally $\cl F$-liftable
 if for any f.d.   $E_0\subset X$
there is  a f.d.  $E\supset E_0$ and a map $u^E\in \cl F(E,C)$ lifting the restriction $u_{|E}$.
 We will say that $X$ has the $\cl F$-LLP if
for any $C$, any $u\in \cl F(X,C/\cl I)$ is  locally $\cl F$-liftable. The introduction of $E$ in place of $E_0$ is a convenient way
to include e.g. the class $\cl F$ formed of unital c.p. maps on f.d. operator systems.
In the latter case $E$ can be any f.d. operator system containing $E_0$.

   \begin{thm} With the above assumptions and those of Theorem \ref{9/10},  the following are equivalent:
    \begin{itemize}
 \item[{\rm (i)}] The space $X $ has the $\cl F$-LLP.
  \item[{\rm (ii)}]  For any $C$,  any  $u\in \cl F( X , C^{**})$ and any f.d.    $E_0\subset X$, there is  a f.d.  $E\supset E_0$ and
  a net of maps $u^E_i \in \cl F( E, C)$ tending pointwise weak*
  to $u_{|E}$.
   \end{itemize}
    \end{thm}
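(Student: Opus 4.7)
The plan is to mirror the proof of Theorem \ref{9/10}, localizing each step to finite-dimensional subspaces; no substantially new ingredient beyond Lemma \ref{11/10}, the bidual splitting \eqref{11/9/4}, Mazur's theorem, and Arveson's principle is required.

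\textbf{For (i) $\Rightarrow$ (ii).} Fix $u\in \cl F(X,C^{**})$ and f.d.\ $E_0\subset X$. In the universal representation $C\subset C^{**}\subset B(H)$ one has $C''=C^{**}$, so Lemma \ref{11/10} furnishes an index set $I$, a $C^*$-subalgebra $L\subset\ell_\infty(\{C_i\mid i\in I\})$ with $C_i=C$, and a surjective $*$-homomorphism $Q:L\to C^{**}$ sending $(x_i)$ to $\text{sot*-}\lim x_i$. Viewing $u$ as a map into the quotient $L/\ker Q$, the $\cl F$-LLP of $X$ yields a f.d.\ $E\supset E_0$ and a lifting $\hat u\in\cl F(E,L)$ of $u_{|E}$. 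Each coordinate projection $\pi_i:L\to C$ is a $*$-homomorphism, so by Remark \ref{2/12} the compositions $u^E_i:=\pi_i\circ\hat u$ lie in $\cl F(E,C)$; and for every $x\in E$ the bounded net $(u^E_i(x))$ converges sot* in $B(H)$ to $Q(\hat u(x))=u(x)$, hence a fortiori weak* in $C^{**}$.

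\textbf{For (ii) $\Rightarrow$ (i).} Fix $u\in\cl F(X,C/\cl I)$ and f.d.\ $E_0\subset X$. The splitting \eqref{11/9/4} furnishes a $*$-homomorphic section $\rho:(C/\cl I)^{**}\to C^{**}$ of $q^{**}$; if $\cl F$ is restricted to unital maps, replace $\rho$ by the u.c.p.\ lifting $\rho_1=\rho(\cdot)+f(\cdot)p$ with $f$ a state on $(C/\cl I)^{**}$ and $p$ the unit of $\cl I^{**}$, exactly as in the proof of Theorem \ref{9/10}. Set $v:=\rho_1\circ i_{C/\cl I}\circ u\in\cl F(X,C^{**})$, so that $q^{**}\circ v=i_{C/\cl I}\circ u$. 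Hypothesis (ii) applied to $v$ and $E_0$ produces a f.d.\ $E\supset E_0$ together with a net $(v^E_\a)\subset\cl F(E,C)$ such that $v^E_\a(x)\to v(x)$ weak* for each $x\in E$. Composing with $q:C\to C/\cl I$ gives $q\,v^E_\a\to u_{|E}$ pointwise in the weak topology of $C/\cl I$. Since the admissible class $\cl F(E,C)$ is convex, Mazur's theorem (applied as in \cite[Rem. A.10]{P6}) yields a net of convex combinations $w_\b\in\cl F(E,C)$ with $q\,w_\b\to u_{|E}$ pointwise in norm. As $E$ is f.d.\ (hence separable) and $\cl F(E,C)$ is admissible and pointwise closed, Arveson's principle upgrades this to the existence of a genuine lifting $u^E\in\cl F(E,C)$ of $u_{|E}$, establishing the $\cl F$-LLP.

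\textbf{The main point that requires care} is the tracking of the finite-dimensional witness $E$: it is not fixed a priori but produced along the way in each direction, so one must verify no circularity arises. In (i)$\Rightarrow$(ii), $E$ is the one chosen by the LLP and then the same $E$ parametrizes the approximating net; in (ii)$\Rightarrow$(i), $E$ is the one provided by (ii) and then serves as the witness for the LLP. Aside from keeping this indexing consistent, the argument is a verbatim localization of the global proof, and the mild twist of supplying a u.c.p.\ (rather than merely $*$-homomorphic) section $\rho_1$ in the unital case is handled exactly as before.
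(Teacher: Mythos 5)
Your proof is correct and follows exactly the route the paper intends: the paper's own "proof" is the one-line remark that the argument of Theorem \ref{9/10} adapts, and your write-up is precisely that adaptation (Lemma \ref{11/10} applied to $C''=C^{**}$ in the universal representation for (i)$\Rightarrow$(ii), and the splitting \eqref{11/9/4} plus Mazur and Arveson's principle on the f.d.\ witness $E$ for (ii)$\Rightarrow$(i)), with the bookkeeping of $E\supset E_0$ handled correctly.
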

    \begin{proof} The proof of Theorem \ref{9/10} can be easily adapted to prove this.
\end{proof}
     
Since the work of T.B. Andersen and Ando \cite{TBA,An} it has been known that 
if a separable Banach space $X$ has 
the metric approximation property
then
any contractive $u: X \to C/\cl I$ admits a contractive lifting.
Although we will not use it,
we  conclude our general discussion
by reformulating this result in our framework.
Let us say that $u: X \to C/\cl I$ is $\cl F$-approximable
if there is a net of finite rank maps $u_\a\in \cl F (X, C/\cl I)$
tending pointwise to $u$. 
For the examples of  $\cl F$ considered in Remark \ref{19/10}
it is obviously equivalent to say that  the identity map of $C/\cl I$  is locally $\cl F$-liftable
or to say that  for any $X$ any finite rank map  in $\cl F( X , C/\cl I)$ is locally $\cl F$-liftable.

With this terminology, Ando's 
well known result   \cite{An}
can nowadays be reformulated like this:
\begin{pro} Assume that any finite rank map in $  \cl F( X , C/\cl I)$ is liftable in $\cl F(X,C)$.
If $u: X \to C/\cl I$ is $\cl F$-approximable, 
  then
$u$ admits a (global) lifting in $\cl F (X, C )$.
\end{pro}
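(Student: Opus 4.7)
The plan is to reduce the statement to a direct application of Arveson's principle as recalled at the beginning of this section. The hypothesis of $\cl F$-approximability furnishes a net of finite rank maps $u_\alpha \in \cl F(X,C/\cl I)$ converging pointwise to $u$, while the hypothesis on liftability of finite rank maps furnishes, for each $\alpha$, some $\hat u_\alpha \in \cl F(X,C)$ with $q\circ \hat u_\alpha = u_\alpha$. Set
$$\cl L = \{\,v\in \cl F(X,C)\mid q\circ v \in \cl F(X,C/\cl I)\,\} = \cl F(X,C),$$
the last equality being automatic from the standing assumption in Remark \ref{2/12} that $\pi\circ v\in \cl F(X,D)$ whenever $v\in \cl F(X,C)$ and $\pi:C\to D$ is a $*$-homomorphism (apply this to $\pi=q$). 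Then each $u_\alpha$ lies in $q(\cl F(X,C))$.

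Since $u_\alpha \to u$ pointwise, $u$ lies in the pointwise closure $\overline{q(\cl F(X,C))}$. Arveson's principle applied to the admissible class $\cl F(X,C)$ (admissibility is by hypothesis, and $X$ is assumed separable throughout this part of the paper) yields
$$\overline{q(\cl F(X,C))} = q\bigl(\overline{\cl F(X,C)}\bigr).$$
By our standing assumption that $\cl F(X,C)$ is closed for the topology of pointwise convergence, the right-hand side equals $q(\cl F(X,C))$. Thus $u \in q(\cl F(X,C))$, i.e. there exists $\hat u \in \cl F(X,C)$ with $q\circ \hat u = u$, which is the desired global lifting.

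The only delicate point is verifying that Arveson's principle applies in the required generality: it needs admissibility plus either boundedness of $\cl F(X,C)$ or restriction to a countable subset of $X$ for pointwise convergence testing. In the examples of Remark \ref{19/10} the class is a unit ball and hence bounded, so this is automatic; in general, since $X$ is separable one can test pointwise convergence on a countable dense subset, and the second form of Arveson's principle suffices. No other obstacle arises, and no separate argument about the topology on $C/\cl I$ versus $C$ is needed, because we are passing from the pointwise closure of $q(\cl F(X,C))$ in the target $C/\cl I$ directly to $q$ of the pointwise closure of $\cl F(X,C)$ in $C$.
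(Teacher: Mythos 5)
Your proof is correct and follows exactly the route the paper intends: the paper's own proof is the single line ``This follows from Arveson's principle,'' and your argument simply spells out the implicit details (lifting the finite rank approximants to place $u$ in $\overline{q(\cl F(X,C))}$, then applying $\overline{q(\cl F)}=q(\overline{\cl F})$ together with the standing assumption that $\cl F(X,C)$ is pointwise closed). The side remarks about boundedness versus countable pointwise testing are accurate but not strictly needed, since admissible classes are bounded by definition in this paper.
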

\begin{proof} This follows from Arveson's principle.
 \end{proof}

 Let us denote by $\cl F_{ccp}$ (resp. $\cl F_{ucp}$) the class of
 c.c.p. (resp. u.c.p.) maps.
 Then the LP is just the $\cl F_{ccp}$-LP.
 In the unital case, we   show below that the $\cl F_{ccp}$-LP
 and the  $\cl F_{ucp}$-LP are equivalent.
 Of course in the latter case we restrict to
 lifting quotients of unital $C^*$-algebras.

  The following statement about the unitization process is a well known
  consequence of the works of Choi-Effros 
  and Kirchberg \cite{[CE5], Kiuhf}.
  
  \begin{pro}\label{14/11} Let $A$ be   a separable $C^*$-algebra and let $A_1$ be its unitization. The following properties of    $A$
  are equivalent:
  \item{\rm (i)} The lifting property LP, meaning the $\cl F_{ccp}$-LP.
   \item{\rm (ii)}  The unitization $A_1$ has the $\cl F_{ucp}$-LP.
\\Moreover, when $A$ is unital {\rm (i)} and {\rm (ii)} are equivalent to
     \item{\rm (iii)} The $C^*$-algebra $A$ has the $\cl F_{ucp}$-LP.
     \end{pro}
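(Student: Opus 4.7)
My overall plan is to prove (i) $\Leftrightarrow$ (ii) by a direct extension/restriction argument between $A$ and its unitization $A_1$, after which (iii) for unital $A$ follows from the convention that $A_1 = A$ in that case (or, if the unitization is always taken to be the forced unitization, by splitting a u.c.p.\ map on $A_1 \simeq A\oplus\CC$ via the central projection $(1_A,0)$ and using that $\CC$ trivially has the u.c.p.-LP). The central technical tool is a Stinespring dilation that bridges c.c.p.\ maps on $A$ with u.c.p.\ maps on $A_1$.

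For (i) $\Rightarrow$ (ii), I will start from a u.c.p.\ map $u \colon A_1 \to C/\cl I$, assuming $C$ unital (by unitizing, $\cl I$ remains an ideal in $C_1$ and the target quotient stays unital). The restriction $u_{|A}$ is c.c.p., so by the LP it lifts to a c.c.p.\ $v \colon A \to C$. I then extend $v$ to $\tilde v \colon A_1 \to C$ by the formula $\tilde v(a + \lambda 1_{A_1}) = v(a) + \lambda 1_C$; to verify that $\tilde v$ is u.c.p., I dilate the Stinespring decomposition $v = V^*\pi(\cdot)V$ (with $V\colon H\to K$ a contraction and $\pi\colon A \to B(K)$ a $*$-representation) to the isometry $\tilde V = \binom{V}{(I-V^*V)^{1/2}} \colon H \to K \oplus H$ and the unital $*$-representation $\tilde\pi(a+\lambda 1) = (\pi(a)+\lambda I_K) \oplus \lambda I_H$ of $A_1$, and then verify $\tilde v = \tilde V^*\tilde\pi(\cdot)\tilde V$ by direct computation. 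Unitality of $u$ immediately gives $q\tilde v = u$.

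For (ii) $\Rightarrow$ (i), given a c.c.p.\ $u \colon A \to C/\cl I$, I will first pass to the unitization $C_1$ of $C$ (so $C/\cl I$ embeds as an ideal of the unital quotient $C_1/\cl I$), then extend $u$ to a u.c.p.\ $\tilde u \colon A_1 \to C_1/\cl I$ by the same formula, using the same dilation argument (now with $u$ in place of $v$) to certify that $\tilde u$ is u.c.p. Applying (ii) produces a u.c.p.\ lift $\tilde v \colon A_1 \to C_1$, and the restriction $\tilde v_{|A}$ is c.c.p.\ with image in $C + \cl I = C$, since $q\tilde v_{|A} = u$ takes values in $C/\cl I$ and $\cl I \subset C$. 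The main subtlety throughout is the Stinespring dilation itself, which must be executed carefully to guarantee simultaneously that $\tilde v$ is genuinely unital and that its image lies in $C$ rather than in an ambient $B(H)$; this is the one computation I would write out in full in the actual proof.
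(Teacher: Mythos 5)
Your argument for (i) $\Leftrightarrow$ (ii) is correct and is essentially the paper's: extend the c.c.p.\ lift of $u_{|A}$ unitally (for (i) $\Rightarrow$ (ii)), and extend $u$ unitally into $C_1/\cl I$, lift by (ii), and observe that the lift restricted to $A$ lands in $Q^{-1}(C/\cl I)=C$ (for (ii) $\Rightarrow$ (i)). The only difference is that the paper simply cites Choi--Effros \cite[Lemma 3.9]{[CE5]} for the fact that the unital extension of a c.c.p.\ map is c.p., whereas you reprove it via a Stinespring dilation; that is a legitimate (standard) verification, with the mild caveat that the non-unital Stinespring theorem you invoke is often itself derived from that unitization lemma, so you should make sure your source for it is independent.

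The genuine gap is in the treatment of (iii), specifically the implication (iii) $\Rightarrow$ (i). The paper uses the forced unitization throughout (so for unital $A$ one has $A_1\simeq A\oplus\CC$ and (ii), (iii) are genuinely different statements); the ``convention $A_1=A$'' escape is therefore not available. Your fallback --- splitting a u.c.p.\ map on $A\oplus\CC$ along the central projection $(1_A,0)$ --- does not close the loop: the component $a\mapsto u(a,0)$ is a c.p.\ contraction on $A$ that is \emph{not} unital (its value at $1_A$ is $1-u(0,1)$), so (iii) cannot be applied to it, and lifting a non-unital c.c.p.\ map is precisely the statement (i) you are trying to derive. The paper's proof of (iii) $\Rightarrow$ (i) requires real work at exactly this point: one embeds $C/\cl I$ into its unitization, perturbs $u$ to $u_\vp=u+\vp f(\cdot)$ (with $f$ a state) so that $u_\vp(1)$ becomes invertible, renormalizes $x\mapsto u_\vp(1)^{-1/2}u_\vp(x)u_\vp(1)^{-1/2}$ to a \emph{unital} c.p.\ map, lifts that by (iii), conjugates back by a positive contractive lift $z^{1/2}$ of $u_\vp(1)^{1/2}$, and finally removes the perturbation by Arveson's principle applied to the net $(1+\vp)^{-1}u_\vp$. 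None of these steps (renormalization, the invertibility issue, the closing appeal to Arveson's principle) appear in your proposal, so as written the equivalence with (iii) is not established.
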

  \begin{proof} 
Assume (i). Let $q: C \to C/\cl I$ be the quotient map.
  Assuming $C$ unital, let $u: A_1\to C/\cl I$ be a u.c.p. map.
  By (i) there is $w \in CP(A , C)$ with $\|w\|\le 1$
  such that $qw=u_{|A}: A \to C/\cl I$. Let $w_1: A_1 \to C_1$ be the unital extension of $w$.
  By \cite[Lemma 3.9]{[CE5]}, $w_1$ is c.p. Since $C$ is unital
  there is a unital $*$-homomorphism $\pi: C_1 \to C$ extending  $Id_C$.
  Let $\hat u= \pi w_1: A_1 \to C$. Then $\hat u$ is a u.c.p. map
  such that ${q \hat u}_{|A}=u_{|A}$   and $q \hat u(1)=1$. It follows that
  ${q \hat u}=u$, whence (ii).
  
    Assume (ii). Let $u: A \to C/\cl I$ be a c.c.p. map.
    Let $u_1: A_1 \to (C/\cl I)_1\simeq C_1/\cl I$ be the unital map extending $u$.
    By \cite[Lemma 3.9]{[CE5]} again, $u_1$ is c.p. By (ii)
    there is a unital c.p. map $\widehat {u_1}: A_1 \to C_1$ lifting $u_1$.
    Let $q: C \to C/\cl I$  and $Q: C_1 \to C_1/\cl I\simeq (C/\cl I)_1$ be the quotient maps. 
   We have $Q \widehat {u_1}=u_1$ and ${u_1}_{|A} =u$.
   A moment of thought shows that
   $Q$ is the unital extension of $q$, so that
   $Q^{-1} ( C/\cl I)=C$.
   Thus $Q \widehat {u_1}(a) =u_1(a)=u(a) \in C/\cl I$ for any $a\in A$
   implies that $\widehat {u_1}(a) \in C$  for any $a\in A$.
We conclude that ${\widehat {u_1}}_{|A} : A \to C$ is a  
  c.p. lifting of $u$ with norm $\le 1$, whence (i).
  
    Assume (i) with $A$ unital. Let $u: A \to C/\cl I$ be a u.c.p. map.
    Let $v:A \to C$ be a c.c.p. lifting of $u$.
    To show (iii) 
     let $f$ be a state on $A$ and let
    $\hat u= v + (1-v(1)) f$. Then $\hat u$ is a u.c.p. lifting.
    This shows (i) $\Rightarrow$ (iii).

     Conversely, assume (iii). To show (i) let $u: A \to C/\cl I$ be a c.c.p. map.
     Let $u': A \to C_1/\cl I$ be the map $u$ composed with the inclusion
     of $C$ into its unitization $C_1/\cl I$.
     Note that since $u'$ takes its values in $C/\cl I$,
     any lifting $\hat u$ of $u'$ must take its values in $C$, and hence be a lifting of $u$.
     Therefore, it suffices to show that $u'$ admits a c.c.p.  lifting.
      Assume first that $u'(1)$ is invertible in $C_1$.
      We will argue as for Lemma \ref{wku}.
     Define  $w\in CP( A , C_1/\cl I)$ by $w(x)= u'(1)^{-1/2} u'(x)u'(1)^{-1/2}$.
     Then $w$ is unital. Let $\hat w: A \to C_1$ be a u.c.p. lifting of $w$.
     Let $z\in C_1$ be a lifting of $u'(1)$ with $\|z\|=\|  u'(1)\| \le 1$.
     Then the mapping $\hat u: A \to C_1$
     defined by $\hat u(\cdot) = z^{1/2} \hat w (\cdot)  z^{1/2}$ is a
     c.c.p.  lifting of $u'$.  
     Let $f$ be a state on $A$. To complete the proof
   note that for any $\vp>0$ the map $u_\vp\in CP(A,C_1/\cl I)$
   defined by $u_\vp(\cdot)=u'(\cdot) +\vp f(\cdot)$ is a c.p. 
   perturbation of $u'$ with $u_\vp(1)$ invertible.
   By what precedes the maps $(1+\vp)^{-1} u_\vp $ admit
    c.c.p.  liftings. By Arveson's principle
    $u'$ also does. This shows (iii) $\Rightarrow$ (i).
\end{proof}
 \begin{rem} If $A$ is unital (i)-(iii) in Proposition \ref{14/11} are also equivalent to:
 \item{\rm (iv)}  Any unital $*$-homomorphism $u: A \to \C/\cl I$ into a quotient of $\C$
      admits a u.c.p. lifting.\\
      Indeed, (iii) $\Rightarrow$ (iv) is trivial, and  to show (iv) $\Rightarrow$ (iii) one
      can realize $A$ as a quotient of $\C$.
  Then (iv) implies that the identity of $A$ factors via 
  u.c.p. maps through $\C$, so that   (iii) follows from  
  Kirchberg's theorem that    $A=\C$ satisfies (iii) or (ii). \\
 However we  deliberately avoid using the equivalence with (iv)
 to justify our claim that Theorem \ref{L1} yields a new proof
 of the latter  theorem of Kirchberg.
 \end{rem}
 \begin{rem} In \cite {Kir}, Kirchberg defines the LP for $A$  by the property
 (ii) in Proposition \ref{14/11}.  We prefer to use
the  equivalent definition in (i) that avoids   the unitization.
 \end{rem}
 See \cite{HWW}
 for  a discussion of lifting properties in the more general context of $M$-ideals.

\section{A ``new" extension theorem}

We start by a new version of the ``local reflexivity principle" (see particularly  \eqref{gty}). This is the analogue of \cite[Lemma 5.2]{EH}
for the maximal tensor product.

 \begin{thm}\label{t3} Assume that $A$ satisfies  \eqref{pty}.
Let $E\subset A$ be any f.d. subspace.
Then  for any $C^*$-algebra $C$ we have a contractive inclusion
\begin{equation}\label{mblr}
{MB}(E,C^{**})\to  {MB}(E,C)^{**}.\end{equation}
In other words any $u$ in the unit ball of ${MB}(E,C^{**})$ is the weak* limit
of a net $(u_i)$ in the unit ball of ${MB}(E,C)$.
\end{thm}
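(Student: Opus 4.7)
\textit{Plan.} The approach is to adapt the Effros--Haagerup argument for local reflexivity in the minimal tensor setting (\cite{EH}, Lem.~5.2) to the maximal case, with property \eqref{pty} playing the role of property $C'$. The first step is to extend $u$ from $E$ to all of $A$: given $u\in MB(E,C^{**})$ with $\|u\|_{mb}\le1$, apply Theorem~\ref{ext} to obtain $\tilde u:A\to(C^{**})^{**}=C^{****}$ with $\|\tilde u\|_{dec}=\|u\|_{mb}\le1$, and postcompose with the canonical c.c.p.\ Arens projection $P:C^{****}\to C^{**}$ to get $\hat u=P\tilde u:A\to C^{**}$ extending $u$ (using $P\circ i_{C^{**}}=Id_{C^{**}}$). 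Since $\|P\|_{dec}=1$, $\|\hat u\|_{dec}\le1$, and Remark~\ref{r22} then yields $\|\hat u\|_{mb}=\|\hat u\|_{dec}\le1$.

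The second step is to approximate $\hat u$ by maps with range in $C$. Realize $C^{**}$ as a von Neumann algebra via its universal representation and apply Lemma~\ref{11/10}: there exist a directed set $I$, a $C^*$-subalgebra $L\subset\ell_\infty(\{C\}_{i\in I})$, and a surjective $*$-homomorphism $Q:L\to C^{**}$ defined by $\text{sot}^*$-limits. For each $\vp>0$, the aim is to produce a map $\widetilde u:A\to L$ with $Q\widetilde u=\hat u$ and $\|\widetilde u\|_{mb}\le 1+\vp$. This is where the hypothesis \eqref{pty} on $A$ enters essentially: applied with the family $(D_i)_{i\in I}=(C)_{i\in I}$, it gives the isometric embedding $\ell_\infty(\{C\})\otimes_{\max}A\hookrightarrow\ell_\infty(\{C\otimes_{\max}A\})$, which restricts isometrically to $L\otimes_{\max}A$. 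Combining this with the quotient identity of Remark~\ref{rtyb}, namely $C^{**}\otimes_{\max}A\simeq (L\otimes_{\max}A)/(\ker Q\otimes_{\max}A)$ with unit balls matching up to $\vp$, yields the required approximate map-lift $\widetilde u$ of $\hat u$.

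Writing $\widetilde u=(\widetilde u_i)_{i\in I}$ coordinate-wise and using that $\C$ itself satisfies \eqref{pty}, one obtains an isometric inclusion $\C\otimes_{\max}\ell_\infty(\{C\})\hookrightarrow\ell_\infty(\{\C\otimes_{\max}C\})$, whence the identity $\|\widetilde u\|_{mb}=\sup_i\|\widetilde u_i\|_{mb}$. Thus the restrictions $v_i:=\widetilde u_i|_E:E\to C$ satisfy $\|v_i\|_{mb}\le 1+\vp$, while $Q\widetilde u=\hat u$ translates to $v_i(e)\to u(e)$ in the $\text{sot}^*$-topology (hence in the weak-$*$ topology) of $C^{**}$ for every $e\in E$. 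A diagonal argument letting $\vp\to0$ and passing to subnets within the weak-$*$ compact unit ball of $MB(E,C)^{**}$ then produces the desired net in the unit ball of $MB(E,C)$ converging weak-$*$ to $u$, which (via the identification $B(E,C)^{**}=B(E,C^{**})$ valid for finite-dimensional $E$) gives the contractive inclusion \eqref{mblr}.

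The main obstacle lies in Step~2: promoting the element-wise quotient-lifting of $C^{**}\otimes_{\max}A$ into $L\otimes_{\max}A$ (automatic up to $\vp>0$ via Remark~\ref{rtyb}) to a consistent $\C$-linear lift of the \emph{map} $\hat u$ itself. This transition combines the distributivity provided by \eqref{pty} with a selection procedure, and is precisely the place where the hypothesis on $A$ (as opposed to merely on $\C$) is essential.
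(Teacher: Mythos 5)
There is a genuine gap at the heart of your Step~2, and it is not a detail that a ``selection procedure'' fills in: it is the whole theorem. What you need there is that the map $\hat u\colon A\to C^{**}$ (or even just $u=\hat u_{|E}$ on the f.d.\ space $E$) lifts through the quotient $Q\colon L\to C^{**}$ to a map $\widetilde u$ with $\|\widetilde u\|_{mb}\le 1+\vp$. But by the equivalence (i)$\Leftrightarrow$(ii)$\Leftrightarrow$(iii) of Theorem~\ref{9/10} applied to the class $\cl F(E,\cdot)=B_{MB(E,\cdot)}$, this $MB$-lifting statement is \emph{equivalent} to the conclusion \eqref{mblr} you are trying to prove; in the paper it is derived \emph{from} Theorem~\ref{t3} (see Remark~\ref{r21} and Lemma~\ref{L21}), not the other way around. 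The tools you invoke do not produce it. Remark~\ref{rtyb} and \eqref{7/9/1} lift individual \emph{elements} $t\in (C/\cl I)\otimes_{\max}E$ with control of the single norm $\|t\|_{D\otimes_{\max}E}$ for the one $D$ at hand; the $MB$-norm of a map $v\colon E\to C$ is a supremum over \emph{all} auxiliary algebras $D$ and all $t\in B_{D\otimes_{\max}E}$, and lifting each $[Id_D\otimes u](t)$ separately gives lifts that need not be of the form $[Id_D\otimes\widetilde u](t)$ for one linear $\widetilde u$. Making these choices coherent is exactly the difficulty, and ``distributivity provided by \eqref{pty} plus a selection procedure'' is an assertion, not an argument. (Your Step~1 and the final limiting argument are fine, but they sit on top of this unproved step.)

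The paper's proof avoids the lifting problem entirely by dualizing. It identifies the unit ball of $MB(E,C)^*$ with the set $K$ of functionals $u\mapsto \langle w,[Id_D\otimes u](t)\rangle$ with $w\in B_{(D\otimes_{\max}C)^*}$ and $t\in B_{D\otimes_{\max}E}$, $D$ arbitrary; the only place \eqref{pty} is used is to show $K$ is weak* \emph{closed}: given a net of witnesses $(D_i,w_i,t_i)$, one assembles the single algebra $D=\ell_\infty(\{D_i\})$ and the single tensor $t=(t_i)$, and \eqref{pty'} guarantees $\|t\|_{D\otimes_{\max}E}\le 1$, so a weak* limit point of the (pulled back) $w_i$ exhibits the limit functional as an element of $K$. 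The bipolar theorem then reduces \eqref{mblr} to checking $|f(u'')|\le 1$ for $f\in K$ and $u''\in B_{MB(E,C^{**})}$, which follows from $\|[Id_D\otimes u''](t)\|_{(D\otimes_{\max}C)^{**}}\le\|[Id_D\otimes u''](t)\|_{D\otimes_{\max}C^{**}}\le 1$. If you want to salvage a lifting-based route, you would first need something like Theorem~\ref{L4}: a single ``universal'' element $P_E\in\C\otimes E$ parametrizing all of $B_{D\otimes_{\max}E}$ at once, which reduces the $MB$-norm to the norm of one tensor and makes element-lifting applicable. As written, your proposal reformulates the theorem rather than proving it.
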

   \begin{proof}
  This will follow from the bipolar theorem.
  We first need to identify the dual of
   ${MB}(E,C)$. As a vector space 
   ${MB}(E,C) \simeq C  \otimes E^*$ and
   hence ${MB}(E,C)^* \simeq C^* \otimes E$
(or say $({C^*})^{\dim(E)}$).
  We equip $C^* \otimes E$ with the norm $\a$ defined as follows.
  Let $K\subset {MB}(E,C)^*$ denote the set of those $f \in {MB}(E,C)^*$ for which 
  there is a $C^*$-algebra $D$,
   a functional  $w$ in the unit ball of
   $ {(D\otimes_{\max } C)^* } $
  and  
  $t \in B_{ D\otimes_{\max } E}$, 
  so that
  $$\forall u\in {MB}(E,C)\quad f(u)= \langle w, [Id_D \otimes u](t)\rangle .$$
   We could rephrase this in tensor product language:
  note that we have a natural bilinear map
$$ {(D\otimes_{\max } C)^* } \times (D \otimes_{\max} E) \to C^* \otimes E$$
associated to the duality $D^* \times D \to \CC$, then $K$
can be identified with  the union (over all $D$'s) of the  images of the product of the two unit balls
under this bilinear map.

We will show that $K$  is  the unit ball of
${MB}(E,C)^*$.
Let $D_1,D_2$ be $C^*$-algebras. Let $D=D_1\oplus D_2$
with the usual $C^*$-norm.
Using the easily checked identities
 (here the direct sum is in the $\ell_\infty$-sense) $$D \otimes_{\max} E= (D_1 \otimes_{\max} E) \oplus (D_2 \otimes_{\max} E), \text{  and  } {D\otimes_{\max } C  }=(D_1\otimes_{\max } C ) \oplus
(D_2\otimes_{\max } C) ,$$
and hence $ {(D\otimes_{\max } C)^* }=(D_1\otimes_{\max } C)^* \oplus_1
(D_2\otimes_{\max } C)^*$
(direct sum in the $\ell_1$-sense), it is easy to check that 
$K$ is convex and hence that $K$
is the unit ball of some norm $\a$ on   ${MB}(E,C)^*$.\\
  Our main point  is the  claim that $K$ is weak* closed.
  To prove this, let $(f_i)$ be a net in $K$ converging
  weak* to some $f \in {MB}(E,C)^*$.
  Let $D_i$ be $C^*$-algebras, $w_i \in B_{(D_i\otimes_{\max} C)^*}$
  and $t_i \in B_{D_i \otimes_{\max} E}$ such that    we have 
  $$\forall u\in {MB}(E,C)\quad f_i(u)= \langle w_i, [Id_{D_i} \otimes u](t_i)\rangle .$$
    Let $D= \ell_\infty(\{D_i\})$ and 
  let $t\in D\otimes E$ be associated to $(t_i)$.
   By \eqref{pty'}
  we know that $\|t\|_{\max}\le 1$.
  Let $p_i: D \to D_i$ denote the canonical coordinate projection,
  and let $ v_i\in (D\otimes_{\max} C)^*$ be the functional
  defined by $v_i(x)= w_i(  [p_i\otimes Id_C](x) )$.
  Clearly $v_i \in B_{(D\otimes_{\max} C)^*}$  and
 $  f_i(u)= \langle v_i, [Id_D \otimes u](t)\rangle .$
 Let $w$ be the weak* limit of $(v_i)$.
  By   weak* compactness,  $w\in B_{(D\otimes_{\max} C)^*}$.
  Then
  $f(u)=\lim f_i(u)= \langle w, [Id_D \otimes u](t)\rangle .$
  Thus we conclude $f\in K$, which proves our claim.
  
  By the very definition of $\|u\|_{{MB}(E,C)}$ we have
  $$\|u\|_{mb}= \sup\{ |f(u) | \mid f\in K\}.$$
  This implies that the   unit ball of the dual of ${MB}(E,C)$ is the bipolar of $K$,
  which is equal to its weak* closure. By what precedes,
  the latter coincides with $K$. Thus the gauge of $K$ is the 
  announced dual norm $\a=\|\  \|_{{MB}^*}$.
  
  Let $u''\in {{MB}(E,C^{**})}$ with $\|u''\|_{mb}\le 1$. By the bipolar theorem,
  to complete the proof  it suffices to show that
  $u''$ belongs to the bipolar of $K$, or equivalently that
  $ |f(u'')|\le 1  $ for any $f\in K$.
  To show this consider  $f\in K$ taking 
  $  u\in MB(E,C)$ to $ f(u)= \langle w, [Id_D \otimes u](t)\rangle $
  with $w \in B_{(D\otimes_{\max} C)^*}$
  and $t \in B_{D \otimes_{\max} E}$.
  Observe that $[Id_D \otimes u''] (t) \in D \otimes C^{**} \subset (D \otimes_{\max} C)^{**}$.
  Recall that $MB(E,C^{**})\simeq MB(E,C)^{**} \simeq (C^{**})^{\dim(E)}$
  as vector spaces. Thus we may view  $f\in MB(E,C)^{*}$ 
  as a weak* continuous functional on $MB(E,C^{**})$ to define 
$f(u'')$.
   We claim that 
  \begin{equation}\label{cb1}  f(u'')= \langle w, [Id_D \otimes u''](t)\rangle ,\end{equation}
  where the duality is relative  to
  the pair $\langle (D \otimes_{\max} C)^{*},(D \otimes_{\max} C)^{**} \rangle$.
  From this claim the conclusion is immediate. Indeed,
  we have 
 $\|[Id_D \otimes u''](t) \|_{D \otimes_{\max} C^{**}}\le \|u''\|_{mb}\le 1$, and
by the maximality of the max-norm on $D  \otimes  C^{**} $
we have a fortiori 
$$\|[Id_D \otimes u''](t) \|_{(D \otimes_{\max} C)^{**} }\le \|[Id_D \otimes u''](t) \|_{D \otimes_{\max} C^{**}} \le 1.$$
Therefore  
$|f(u'')|=|\langle w, [Id_D \otimes u''](t)\rangle|  \le \|w\|_{(D \otimes_{\max} C)^{*}} \le 1$, which completes the proof modulo our claim \eqref{cb1}.\\
To prove the claim, note that 
the identity \eqref{cb1}  holds for any $u\in MB(E,C)$.
Thus it suffices to prove that the right-hand side of \eqref{cb1} 
is a  weak* continuous function of $u''$ (which is obvious for the left-hand side). 
To check this
one way is to note that $t\in D \otimes E$ can be written
as a finite sum $t=\sum d_k \otimes e_k$ ($d_k\in D,e_k\in E$) and 
if we denote by $\dot w: D \to C^*$ the linear map associated to $w$
we have
$$\langle w, [Id_D \otimes u''](t)\rangle
=\sum\nl_k \langle w, [d_k \otimes u''(e_k)]\rangle
=\sum\nl_k  \langle  \dot w(d_k) , u''(e_k) \rangle, $$
and since  $\dot w(d_k) \in C^*$ the weak* continuity as a function of $u''$ is obvious,
completing the proof.
  \end{proof}

\begin{rem}\label{fg} The preceding proof actually shows that, without any assumption on $A$ or $C$,
 we have a contractive inclusion
  \begin{equation}\label{gty}{SB}(E,C^{**})\to {SB}(E,C)^{**}\end{equation}
 Of course if $A$ satisfies \eqref{pty} then ${SB}(E,C)= {MB}(E,C)$ isometrically and we  recover Theorem \ref{t3}.
\end{rem}

\begin{rem}  
The converse inclusion to \eqref{mblr} holds in general : we claim that
 we have a contractive inclusion \begin{equation}\label{gtyb}{MB}(E,C)^{**}\to {MB}(E,C^{**}).\end{equation}
 Indeed, let $u_i: E \to C$ be  a net with $\|u_i\|_{mb} \le 1$
 tending weak* to $u: E \to C^{**}$.  Let $t\in \C \otimes E$ with $\|t\|_{\max} \le 1$,
 say with $t\in F \otimes E$ with $F\subset \C$ f.d. Then  $[Id_{\C} \otimes u] (t) \in B_{[F \otimes_{\max} C]^{**}}$.
 Note that since $F$ is f.d. $[F \otimes_{\max} C]^{**} \subset [\C \otimes_{\max} C]^{**} \cap [F \otimes  C^{**}] $.
We have clearly an isometric inclusion $[F \otimes_{\max} C]^{**} \subset [\C \otimes_{\max} C]^{**}$.
Moreover (see \cite[Th. 8.22]{P6}) 
we have an  isometric
embedding $ \C^{**} \otimes_{\rm bin} C^{**}
\subset [\C \otimes_{\max} C]^{**}$.
Since $F\subset \C$, the norm induced by $\C^{**} \otimes_{\rm bin} C^{**}$
on $F \otimes  C^{**}$ coincides with that induced by $\C  \otimes_{\rm nor} C^{**}$.
By Kirchberg's theorem  to the effect that \eqref{27/9} holds when $A=\C$ (see \cite[Th. 9.10]{P6})
  $\C  \otimes_{\rm nor} C^{**}= \C  \otimes_{\max} C^{**}$.
  Thus we find $\|u\|_{MB(E, C^{**})} \le 1$.
  This proves the claim.
\end{rem}

\begin{rem}\label{per} We will use  an elementary
perturbation argument as follows. Let $A,C$ be  $C^*$-algebras and let $E\subset A$
be a f.d. subspace. Let $v: E \to C$. For any $\d>0$ there is $\vp>0$
(possibly depending on $E$)
satisfying the following: for any
 map  $v': A \to C$ be such that $\|v'_{|E}-v\| \le  \vp$,
  there is a map $v'': A \to C$ such that $$v''_{|E}= v \text{  and  }
 \|v''-v'\|_{ mb} \le \d , 
  \text{  and hence  } \|v''\|_{mb} \le   \|v'\|_{mb} +\d. $$
  %%%%%%%
 $$\xymatrix{&A\ar@{-->}[dr]^{ v'} \\
& E\ \ \ar@{^{(}->}[u] 
 \ar[r]_{\ \  {v'}_{|E}\approx v \quad } & C} \xymatrix{&A\ar@{-->}[dr]^{ v''\approx v'} \\
& E\ \ \ar@{^{(}->}[u] 
 \ar[r]_{ v \quad } & C}$$
 %%%%%%%
  Let $\Delta= v-v'_{|E}$. 
  Let $\|\Delta\|_{N} $
  denote
   the  nuclear norm (in the Banach space sense) 
  of $\Delta:E \to C$. By definition (here $E$ and $C$ are Banach spaces with  $E$ f.d.), this is the infimum
  of $\sum_1^d \|f_j\|_{E^*} \|c_j\|_{C}$ over all 
  the possible representations of $\Delta$ as $\Delta(x)=\sum_1^d f_j(x) c_j$ ($x\in E$).
  Let $k_E=\|Id_E\|_{N}$. 
  It is immediate that for any $\Delta :E \to C $ we have
 $\|\Delta\|_{N} \le k_E \|\Delta\|$.
 By Hahn-Banach, $\Delta$ admits an extension $\tilde\Delta: A \to E$
  with $\|\tilde\Delta\|_{N} \le \| \Delta\|_{N} \le k_E \|\Delta\| \le \vp k_E$.
  Let $v''= v' + \tilde \Delta$. Then $v''_{|E}= v$
  and $$ \|v''-v'\|_{mb} \le \|v''-v'\|_{N}=\|\tilde\Delta\|_{N} \le \vp k_E.$$
  Whence the announced result with $\d=\vp k_E.$
\end{rem}

\begin{thm}\label{ex} Assume that $A$ satisfies  \eqref{pty} (or merely the conclusion of Theorem \ref{t3}).
Let   $C$ be a $C^*$-algebra. Let $E\subset X$ be a f.d. subspace of a separable subspace $X\subset A$.
Then for any $\vp>0$,  any map $v: E \to C$  admits an extension
$\tilde v: X \to C$ such that
$\|\tilde v \|_{mb} \le (1+\vp)\|  v \|_{mb} $.
\end{thm}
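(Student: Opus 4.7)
The plan is to build $\tilde v$ by recursion along a dense exhaustion of $X$ by finite-dimensional subspaces, using Theorem \ref{t3} as the engine of approximation and the Hahn--Banach perturbation of Remark \ref{per} to enforce exact agreement with the previous stage. Fix an increasing chain $E=E_0\subset E_1\subset E_2\subset\cdots\subset X$ of finite-dimensional subspaces with $\overline{\bigcup_n E_n}=X$, choose positive $\vp_n$ with $\sum_n \vp_n<\vp$, and set $K:=\|v\|_{mb}$ and $K_0:=K$. I will construct compatible maps $\tilde w_n:E_n\to C$ with $\tilde w_0=v$, $\tilde w_n|_{E_{n-1}}=\tilde w_{n-1}$, and $\|\tilde w_n\|_{mb}\le K_n:=K_{n-1}+K\vp_n$; in particular $K_n<K(1+\vp)$ for all $n$.

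For the recursive step, given $\tilde w_{n-1}$, apply Theorem \ref{ext} to extend $i_C\tilde w_{n-1}$ to $\hat w_{n-1}:A\to C^{**}$ with $\|\hat w_{n-1}\|_{mb}=\|\hat w_{n-1}\|_{dec}=\|\tilde w_{n-1}\|_{mb}$ (using \eqref{e22}). The restriction $\hat w_{n-1}|_{E_n}\in MB(E_n,C^{**})$ has $mb$-norm at most $K_{n-1}$ and agrees with $i_C\tilde w_{n-1}$ on $E_{n-1}$. Since $A$ satisfies \eqref{pty}, Theorem \ref{t3} embeds it contractively in the ball of radius $K_{n-1}$ of $MB(E_n,C)^{**}$, so it is the weak$^*$-limit of a net $(w_\alpha)\subset K_{n-1}\cdot B_{MB(E_n,C)}$. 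Testing against the functionals $u\mapsto f(u(x))$ (which lie in $MB(E_n,C)^*$ for $f\in C^*$, $x\in E_n$) yields $w_\alpha(x)\to\hat w_{n-1}(x)$ weak$^*$ in $C^{**}$; for $x\in E_{n-1}$ the limit already lies in $C$, so convergence is weak in $C$. As $E_{n-1}$ is finite-dimensional, Mazur's theorem (applied in the Banach space $B(E_{n-1},C)$) produces convex combinations $\bar w_\alpha$, still in $K_{n-1}\cdot B_{MB(E_n,C)}$, with $\bar w_\alpha|_{E_{n-1}}\to\tilde w_{n-1}$ in norm. Finally apply the verbatim analog of Remark \ref{per} with $E_{n-1}\subset E_n$ replacing $E\subset A$ (the argument only uses Hahn--Banach extension, inside $E_n^*$, of each coefficient functional of a nuclear representation of the correction $\tilde w_{n-1}-\bar w_\alpha|_{E_{n-1}}$): for $\alpha$ chosen far enough in the net there results $\tilde w_n:E_n\to C$ with $\tilde w_n|_{E_{n-1}}=\tilde w_{n-1}$ and $\|\tilde w_n-\bar w_\alpha\|_{mb}\le K\vp_n$. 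Hence $\|\tilde w_n\|_{mb}\le K_n$, closing the induction.

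By compatibility the $\tilde w_n$'s glue to $\tilde w:\bigcup_n E_n\to C$, and since $\|\tilde w(x)\|\le K(1+\vp)\|x\|$ this extends by continuity to $\tilde v:X\to C$ with $\tilde v|_E=v$. To bound $\|\tilde v\|_{mb}$, fix any $C^*$-algebra $D$: density of $\bigcup_n E_n$ in $X$ gives density of $D\otimes(\bigcup_n E_n)$ in $D\otimes_{\max}X$ (the projective norm dominates the max-norm), and any element of this dense subspace lies in some $D\otimes E_n$, where
$$\|(Id_D\otimes\tilde v)(t)\|_{D\otimes_{\max}C}=\|(Id_D\otimes\tilde w_n)(t)\|_{D\otimes_{\max}C}\le K_n\|t\|_{D\otimes_{\max}E_n}\le K(1+\vp)\|t\|_{D\otimes_{\max}X}.$$
Extending by continuity one obtains $\|\tilde v\|_{mb}\le K(1+\vp)$. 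The principal delicacy is in the recursive step: upgrading the weak$^*$-approximation from Theorem \ref{t3} to genuine norm approximation on the finite-dimensional previous stage $E_{n-1}$ (possible via Mazur), and then absorbing the remaining discrepancy through a perturbation whose $mb$-cost is controlled purely by the nuclear norm of an easily extendable map on $E_{n-1}$ and can be made geometrically small in $n$.
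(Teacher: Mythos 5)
Your proposal is correct and follows essentially the same route as the paper's own proof: extend $\tilde w_{n-1}$ to $A\to C^{**}$ via Theorem \ref{ext} and \eqref{e22}, pull back into $MB(E_n,C)$ via Theorem \ref{t3}, upgrade weak* to norm convergence on the finite-dimensional previous stage by Mazur, and correct by the nuclear-norm perturbation of Remark \ref{per}, iterating along a dense chain of f.d. subspaces. The only cosmetic difference is that you fold the paper's two stages (a single-step extension lemma followed by the induction) into one recursion, and you spell out the final density argument for $\|\tilde v\|_{mb}$ slightly more explicitly.
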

%%%%%%%

$$\xymatrix{&X\ar@{-->}[dr]^{ \tilde v } \\
& E\ \ \ar@{^{(}->}[u] 
 \ar[r]^{ v \quad } &{  \ \ C}  }$$
 %%%%%%
\begin{proof}
Let $E_1\supset E$ be any finite dimensional superspace with $E_1\subset X$.
  Our first goal will be to show that for any $\vp>0$
  there is an extension $w: E_1 \to C$ such that $w_{|E}=v$
  with $\|w\|_{mb} \le  \|v\|_{mb}+\vp$.
\\
  By Theorem \ref{ext} and by \eqref{e22},  there is a  decomposable map   $\tilde v: A \to C^{**}$
  such that $$\|\tilde v\|_{D(A , C^{**})} =\|\tilde v\|_{MB(A , C^{**})}\le \|v\|_{mb}$$ which is an
   extension of $v$ in the sense that $\tilde v_{|E}=i_C v$
   where $i_C: C \to C ^{**}$ is the canonical inclusion.
   Let $v_1: E_1 \to C^{**}$ be the restriction of $\tilde v$ so  that
   $v_1=\tilde v_{|E_1}$.
   A fortiori  $\|v_1: E_1 \to C^{**}\|_{mb} \le \|v\|_{mb}$.
 By Theorem \ref{t3} there is a net of maps
 $v^i : E_1 \to C   $    with $\| v^i \|_{mb} \le \|v\|_{mb}$ that tend
 weak* to $v_1$. It follows that the restrictions $v^i_{|E}$ tend
 weak* to ${v_1}_{|E}= \tilde v_{|E}=i_C v$.
 This means that
 $v^i_{|E}(x)$ tends weakly in $C$ (i.e.  for $\sigma(C, C^*)$) to $v(x)$  for any $x\in E$ .
    By a well known application of Mazur's theorem
    (see e.g. \cite[Rem. A.10]{P6} for details),
    passing to suitable convex combinations of the $v^i$'s, 
    we can get a similar net such that in addition $v^i_{|E} $ tends 
 pointwise in norm to $v$. Since $E$ is f.d. this implies
 $\| v^i_{|E} -v\|\to 0$.
 By  perturbation (see Remark  \ref{per}), for any $\vp >0$
 we can find  $w: E_1 \to C$ such that $w_{|E}=v$
  with $\|w\|_{mb} \le \|v\|_{mb}+\vp$, so we reach our first goal.\\
Lastly we use the separability of $X$ to form an increasing sequence
$(E_n)$ of f.d. subspaces of $X$ with dense union
such that $E_0=E$. Let $w_0=v : E_0 \to C$.
By induction, we can find a sequence of maps $w_n: E_n \to C$
such that ${w_{n+1}}_{|E_n} = w_n $
and $\|w_{n+1}\|_{mb} \le \|w_{n}\|_{mb} + 2^{-n-1} \d$.
By density this extends to a linear operator
  $w: X \to C$ such that 
  $\|w\|_{mb} \le \|w_{0}\|_{mb} + \d =\|v\|_{mb} + \d $. 
  This   completes the proof.
\end{proof}

\begin{rem} Actually,   it suffices   by Remark \ref{fg} to assume that $X\subset A$ 
satisfies \eqref{pty} (in place of $A$) for the preceding proof to be valid.
We rephrase this in the next statement.
\end{rem}

\begin{thm}\label{t12}
Let $E\subset X$ be a f.d. subspace of a separable subspace $X\subset A$
of a $C^*$-algebra.
For any $\vp>0$,  any map $v: E \to C$ into a $C^*$-algebra $C$ admits an extension
$\tilde v: X \to C$ such that
$\|\tilde v \|_{{sb}} \le (1+\vp)\|  v \|_{{sb}} $.
\end{thm}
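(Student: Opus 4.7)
The plan is to follow the proof of Theorem~\ref{ex} essentially verbatim, making two substitutions throughout: the MB-norm is replaced by the SB-norm, and Remark~\ref{fg} is invoked in place of Theorem~\ref{t3}. Since Remark~\ref{fg} holds with no hypothesis on the ambient $C^*$-algebra $A$, the hypothesis ``$A$ satisfies \eqref{pty}'' disappears from the statement; this is exactly what the remark immediately preceding Theorem~\ref{t12} announces.

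The core inductive step is: given $v:E\to C$ with $\|v\|_{sb}\le \alpha$, a f.d.\ $E_1$ with $E\subset E_1\subset X$, and $\eta>0$, produce $w:E_1\to C$ with $w|_E=v$ and $\|w\|_{sb}\le \alpha+\eta$. I would first extend $v$ to a map $\tilde v:A\to C^{**}$ via an SB-analogue of Theorem~\ref{ext}, so that $v_1:=\tilde v|_{E_1}\in {SB}(E_1,C^{**})$ has norm at most $\alpha$. By Remark~\ref{fg} the inclusion ${SB}(E_1,C^{**})\hookrightarrow {SB}(E_1,C)^{**}$ is contractive, so there is a net $v^i:E_1\to C$ with $\|v^i\|_{sb}\le \alpha$ converging weak$^*$ to $v_1$. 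The restrictions $v^i|_E$ then converge to $v$ weakly in $C^{**}$, and since $E$ is finite dimensional, Mazur's theorem lets me pass to convex combinations so that $v^i|_E\to v$ pointwise in norm. The perturbation argument of Remark~\ref{per} next produces an exact extension $w$; its proof uses only the bound $\|\Delta\|_{sb}\le \|\Delta\|_N$ via the nuclear norm on f.d.\ inputs (which holds for $\|\cdot\|_{sb}$ for the same reason as for $\|\cdot\|_{mb}$, since rank-one maps $f\otimes c$ have SB-norm $\le \|f\|\,\|c\|$), so it transfers without change.

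With this core step in hand, separability of $X$ lets me pick an increasing sequence $E=E_0\subset E_1\subset\cdots\subset X$ of f.d.\ subspaces with dense union and iterate, at stage $n$ with error $\eta_n=2^{-n-1}\vp\|v\|_{sb}$; the compatible extensions $w_n:E_n\to C$ so obtained have $\|w_n\|_{sb}\le (1+\vp)\|v\|_{sb}$, patch to a map on $\bigcup_n E_n$, and extend by continuity to the desired $\tilde v:X\to C$ of SB-norm at most $(1+\vp)\|v\|_{sb}$. The main obstacle is supplying the SB-analogue of Theorem~\ref{ext} invoked in the first sentence of the core step: one needs, given $v\in {SB}(E,C)$, an extension $\tilde v:A\to C^{**}$ whose restriction to any f.d.\ superspace $E_1\supset E$ in $X$ controls the SB-norm back by $\|v\|_{sb}$. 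I expect this to follow by adapting the bipolar-type argument of the proof of Theorem~\ref{t3}/Remark~\ref{fg}, replacing the decomposability/dec-norm description of the MB-dual by the analogous SB-dual description, but this is the only genuinely new technical ingredient required by the proof.
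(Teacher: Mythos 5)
Your route is exactly the paper's: the paper offers no separate proof of Theorem \ref{t12}, only the remark preceding it, which asserts that the proof of Theorem \ref{ex} remains valid with the SB-norm in place of the MB-norm and Remark \ref{fg} in place of Theorem \ref{t3}. You have also correctly isolated the one step of that proof which does not transfer automatically, namely the very first move of the inductive step: producing, from $v\in SB(E,C)$ with $\|v\|_{sb}\le 1$, a map $v_1:E_1\to C^{**}$ extending $i_C v$ with $\|v_1\|_{SB(E_1,C^{**})}\le 1$, to which Remark \ref{fg} can then be applied. This is where the gap lies, and your proposal does not close it. The obvious attempt --- apply Theorem \ref{ext} to get $\tilde v:A\to C^{**}$ with $\|\tilde v\|_{dec}=\|\tilde v\|_{mb}\le\|v\|_{mb}\le\|v\|_{sb}$ and restrict to $E_1$ --- controls only $\|\tilde v_{|E_1}\|_{mb}$, not $\|\tilde v_{|E_1}\|_{sb}$: for an admissible family $t=(t_i)$ one obtains $\sup_i\|(Id_{D_i}\otimes\tilde v)(t_i)\|_{D_i\otimes_{\max}C^{**}}\le 1$, whereas the SB-norm demands a bound in $\ell_\infty(\{D_i\})\otimes_{\max}C^{**}$, and the discrepancy between these two quantities is exactly the kind of $\ell_\infty$-product identity \eqref{pty'} that the theorem is supposed to dispense with. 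Remark \ref{rw13} makes the danger explicit: even $Id_A$, which has dec-norm one, satisfies $\|Id_A\|_{SB(A,A)}=1$ only when $A$ has \eqref{pty}, so a dec-norm bound on a global extension cannot by itself bound SB-norms of restrictions.

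Your proposed repair --- adapting ``the bipolar-type argument of the proof of Theorem \ref{t3}/Remark \ref{fg}, replacing the decomposability/dec-norm description of the MB-dual by the analogous SB-dual description'' --- conflates two different mechanisms. The bipolar argument proves Theorem \ref{t3} and Remark \ref{fg}, i.e.\ approximation of maps into $C^{**}$ by maps into $C$ of the same norm \emph{on a fixed f.d.\ domain}; it says nothing about enlarging the domain. Theorem \ref{ext} rests on an entirely different device (Kirchberg's identity $\|u\|_{mb}=\|i_Cu\|_{dec}$, Arveson--Wittstock extension and ``the trick''), and the paper supplies no analogue of that identity for the SB-norm. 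What is missing is therefore an SB-extension theorem from $E$ to $E_1$ with values in $C^{**}$ and no loss of SB-norm --- equivalently, that the restriction map $SB(E_1,C^{**})\to SB(E,C^{**})$ is a metric surjection --- and neither your outline nor the paper's one-sentence justification establishes it. To be fair, you have located precisely the right pressure point: everything else you describe (Mazur's theorem, the nuclear-norm perturbation of Remark \ref{per}, which does carry over because rank-one maps satisfy $\|f\otimes c\|_{sb}\le\|f\|\,\|c\|$, and the inductive exhaustion of $X$) transfers verbatim. But as written the proof is incomplete at the step you flag, and flagging it without filling it leaves a genuine gap.
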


\begin{rem}\label{r28/11}
Let $A,C$ be unital $C^*$-algebras. Let $E\subset A$ be an operator system
and let $u: E \to C$ be a unital map.
Recall    (see Remark \ref{rw1}) that $u$ is m.p.
if and only if $\|u\|_{mb}=1$.\\
We will denote by $\cl F_{ump}(E,C)$ the class
of u.m.p. maps $u:E \to C$. This is clearly an admissible class
and it is pointwise closed.
\end{rem}

 We will need 
the following consequence of Theorem \ref{t3} for u.m.p. maps:

\begin{cor}\label{c28/11} Let $A,C$ be   unital $C^*$-algebras.
Assume that $A$
satisfies the conclusion of 
Theorem \ref{t3}. Let $E\subset A$ be a f.d. operator system.
Let $u: E \to C^{**}$ be a
  u.m.p. map.
  There is a net of u.m.p. maps $u_i: E \to C $
   tending pointwise
weak*  to $u$.
\end{cor}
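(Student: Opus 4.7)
The plan is to first invoke the conclusion of Theorem \ref{t3} to obtain a candidate net of $mb$-contractive maps into $C$, and then deform each map into a u.m.p. map by a symmetrization-plus-perturbation procedure that is small enough to preserve pointwise weak* convergence.

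First, since $u\colon E\to C^{**}$ is u.m.p., it is in particular unital and c.p., so $\|u\|_{mb}=\|u(1)\|=1$. By the hypothesis we may apply Theorem \ref{t3} to find a net $(u_\a)$ in the unit ball of $MB(E,C)$ converging weak* to $u$ in $MB(E,C)^{**}$; unwinding the identification $MB(E,C)\simeq C\otimes E^*$ used in the proof of that theorem, this simply means that $u_\a(x)\to u(x)$ in $\sigma(C^{**},C^*)$ for every $x\in E$.

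Second, I symmetrize: since $u_*=u$ and the involution on $C^{**}$ is weak*-continuous, the net $u_\a':=(u_\a+(u_\a)_*)/2$ is self-adjoint, still lies in the unit ball of $MB(E,C)$ by Remark \ref{rw2}, and still tends pointwise weak* to $u$. In particular $u_\a'(1)\to u(1)=1$ in $\sigma(C^{**},C^*)$, and since $u_\a'(1)$ and $1$ all lie in $C$ this is the same as weak convergence in $C$. Mazur's theorem then lets me pass to tail convex combinations $w_\b$ of the $u_\a'$'s so that $\|w_\b(1)-1\|\to 0$. Because the $w_\b$'s are formed from arbitrarily late indices, pointwise weak* convergence $w_\b(x)\to u(x)$ on all of the finite dimensional space $E$ is preserved, and self-adjointness and the $mb$-unit-ball bound are automatic.

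Third, for each $\b$ I pick $\vp_\b>0$ with $\vp_\b>\|w_\b(1)-1\|$ and $\vp_\b\to 0$. Since $w_\b$ is self-adjoint, $\|w_\b(1)-1\|<\vp_\b$, and $\|w_\b\|_{mb}\le 1\le 1+\vp_\b$, Lemma \ref{v26} (applied to $w_\b$ with the hypothesis $\vp_\b$) yields a u.m.p. map $v_\b\colon E\to C$ with
\[
\|w_\b-v_\b\|_{mb}\le f_E(\vp_\b)\to 0.
\]
Since $v_\b-w_\b\to 0$ in norm (hence pointwise weakly, and a fortiori weak* in $C^{**}$), and $w_\b\to u$ pointwise weak*, the net $(v_\b)$ is the desired net of u.m.p. maps $E\to C$ converging pointwise weak* to $u$.

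The only delicate point is the third step of the second paragraph: arranging that a single passage to convex combinations gives both $\|w_\b(1)-1\|\to 0$ in norm and $w_\b(x)\to u(x)$ weak* for every $x\in E$. This is the standard Mazur trick for tail convex combinations in a finite dimensional $E$, so there is no substantive obstacle; everything else is either furnished by Theorem \ref{t3} or by the perturbation Lemma \ref{v26}.
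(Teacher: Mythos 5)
Your proof is correct and follows essentially the same route as the paper's: apply Theorem \ref{t3} to get an $mb$-contractive net, symmetrize via Remark \ref{rw2}, pass to convex combinations by Mazur's theorem to force $\|w_\b(1)-1\|\to 0$, and finish with the perturbation Lemma \ref{v26}. The only difference is that you spell out the Mazur step and the choice of $\vp_\b$ more explicitly, which the paper leaves implicit.
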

\begin{proof} 
By Theorem \ref{t3}
 there is a net of maps $v_i: E \to C$ with $\|v_i\|_{mb} \le 1$
 tending pointwise  weak* to $u $. 
 Replacing $v_i$ by 
 $ (v_i+(v_i)_*)/2$,
 we may  assume 
 each $v_i$ self-adjoint (see Remark \ref{rw2}).
 Moreover, since $u(1)=1$,  we may observe that $v_i(1)-1 \to 0 $ 
 in the weak topology (i.e. $\sigma(C,C^*)$) of $C$.
 Passing to convex combinations, we may assume (by Mazur's theorem) that
 $\|v_i(1)-1\| \to 0 $. By Lemma \ref{v26}  
 there is a unital map $u_i \in MP(E,C)$ 
 such that $\| u_i -v_i\|\to 0$. Since
 $v_i \to u $ pointwise weak*, we also have
 $u_i \to u $ pointwise weak*.
\end{proof}

\begin{thm}\label{t28/11} Let $A,C$ be   unital $C^*$-algebras.
Assume that $A$
satisfies the conclusion of 
Theorem \ref{t3}. Let $E\subset X$ be a f.d. operator subsystem
of a separable operator system $X\subset A$.
Let $u: E \to C $ be a
  u.m.p. map. Then for any $\vp>0$
  there is a u.m.p. map
  $v : X \to C $
  such that $\| {v}_{|E} -u\|<\vp$.
\end{thm}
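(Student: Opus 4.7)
The plan is to mirror the inductive construction used in the proofs of Theorems \ref{ex} and \ref{t12}, but to preserve the u.m.p.\ property at every stage. Choose an increasing chain $E = E_0 \subset E_1 \subset E_2 \subset \cdots$ of f.d.\ operator systems in $X$ with $\overline{\bigcup_n E_n} = X$, set $v_0 = u$, and inductively construct u.m.p.\ maps $v_n: E_n \to C$ satisfying $\|v_{n+1}|_{E_n} - v_n\| < 2^{-n-2}\vp$. Then on $\bigcup_n E_n$ the sequence $(v_n(x))$ is norm-Cauchy with total fluctuation at most $(\vp/2)\|x\|$, so its pointwise limit extends by contractivity to a map $v: X \to C$ which is unital (since $v(1) = \lim v_n(1) = 1$) and satisfies $\|v_{|E} - u\| \le \vp/2 < \vp$.

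The heart of the argument is the inductive step. Given u.m.p.\ $v_n: E_n \to C$, apply Remark \ref{trick} with $M = C^{**}$ to extend $i_C \circ v_n$ to a u.m.p.\ map $\hat v_n: A \to C^{**}$; its restriction $\hat v_n|_{E_{n+1}}$ is then u.m.p.\ into $C^{**}$. Since $A$ satisfies the conclusion of Theorem \ref{t3}, Corollary \ref{c28/11} supplies a net of u.m.p.\ maps $w_i: E_{n+1} \to C$ converging pointwise weak* to $\hat v_n|_{E_{n+1}}$. On the f.d.\ subsystem $E_n$ the values of $\hat v_n$ coincide with those of $v_n$ and hence lie in $C$, so weak* convergence there is weak convergence in $C$. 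Convex combinations preserve u.m.p., so Mazur's theorem applied on the f.d.\ space $E_n$ yields a u.m.p.\ $v_{n+1}: E_{n+1} \to C$ whose restriction to $E_n$ approximates $v_n$ in operator norm to within $2^{-n-2}\vp$.

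It remains to verify that $v: X \to C$ is u.m.p. Since $v$ is unital, by Remark \ref{rw1} it suffices to show $\|v\|_{mb} \le 1$. Given $t \in \C \otimes X$, write $t = \sum_k c_k \otimes x_k$ and approximate the $x_k$ by elements of some $E_N$, producing $t_N \in \C \otimes E_N$ with $\|t - t_N\|_{\max} \to 0$. For $M \ge N$ we have $\|[Id_\C \otimes v_M](t_N)\|_{\max} \le \|v_M\|_{mb}\|t_N\|_{\max} = \|t_N\|_{\max}$; letting $M \to \infty$ (using pointwise norm convergence $v_M \to v$ on $E_N$) and then $N \to \infty$ yields $\|[Id_\C \otimes v](t)\|_{\max} \le \|t\|_{\max}$, so $\|v\|_{mb} = 1$. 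The main subtlety is that Mazur's theorem converts weak to norm convergence only when the weak limit lies in $C$, so at each inductive step one can only match $v_n$ approximately (not exactly) on $E_n$; but the summable error schedule $\sum 2^{-n-2}\vp$ absorbs this defect, and no further hypothesis on $C$ is needed.
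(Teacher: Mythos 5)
Your proof is correct and follows essentially the same route as the paper's: the same telescoping induction over an increasing chain of f.d.\ operator systems, with the inductive step extending $v_n$ to a u.m.p.\ map into $C^{**}$ (the paper uses Theorem \ref{ext} with Remark \ref{rw1}, mentioning Remark \ref{trick} as the alternative you chose), then invoking Corollary \ref{c28/11} and Mazur's theorem on $E_n$ to get norm-approximation. Your explicit verification that the limit map is again u.m.p.\ (via approximating $t\in\mathscr{C}\otimes X$ by elements of $\mathscr{C}\otimes E_N$) is a detail the paper leaves implicit, and is a welcome addition rather than a deviation.
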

\begin{proof}  We may assume that we have an increasing sequence
  of f.d. operator systems $(E_n)$ with
 $E_0=E$ such that $\ovl{\cup E_n}=X$.
 We give ourselves  $\vp_n>0$ such that $\sum_{n\ge 1}  \vp_n<\vp$.
 The plan is to construct a sequence of u.m.p. maps 
 $v_n :  E_n \to C $ such that
  $v_0=u$
 and $\|{v_{n+1}}_{|E_n}- v_n\|  <\vp_{n+1} $ for all $n\ge 0$.
 Then the map $v$ defined by $v(x)= \lim v_n(x)$ for $x\in \cup E_n$
 extends by density to a u.m.p. map   $v: X \to C$ 
 such that
 $\|v_{|E} -u\| \le \sum_{n\ge 1} \vp_n <\vp$.
 
 The sequence $(v_n)$ will be constructed by induction starting from $v_0=u$.
  Assume that we have constructed $v_0,\cdots,v_n$ ($n\ge 0$)
satisfying the announced properties and let us produce $v_{n+1}$.
Since   $\|v_n\|_{mb} =1$ by Theorem \ref{ext} and \eqref{e22}
there is a (still unital) map  $\tilde v_n : A \to C^{**}$ 
such that ${\tilde {v_n} }_{|E_n}=i_C v_n $ with $\|  \tilde {v_n}\|_{mb} =1$.
By Remark \ref{rw1}, the map  ${\tilde {v_n} }$ is u.m.p. (see also Remark \ref{trick}).
A fortiori ${\tilde {v_n} }_{|E_{n+1}}: E_{n+1}  \to C^{**}$ is u.m.p.
 By Corollary  \ref{c28/11}
there is a net $(u_i)$ of u.m.p. maps from $E_{n+1} $ to $C$
such that $u_i  -{\tilde {v_n} }_{|E_{n+1}}$  tends weak* to $0$. 
%We may assume $\tilde v_n$  and $v_\b$  all self-adjoint (see Remark \ref{rw2}).
Since $E_n \subset E_{n+1}$,
 the restrictions $(u_i  -{\tilde {v_n} })_{|E_n}$ also tend weak* to $0$.
 Thus ${u_i }_{|E_n} -v_n $ tends weak* to $0$ when $i\to \infty$.
 Since ${u_i }_{|E_n} -v_n  $ takes values in $C$, this tends to  $0$
 weakly in $C$.
 Passing to convex combinations of the maps $u_i$
 we may assume that $\lim_i\|{u_i }_{|E_n} -v_n \| = 0$.
Choosing $i$  large enough, we have
 $\|{u_i }_{|E_n} -v_n \| <\vp_{n+1}$, so we may set $v_{n+1}=u_i$.
This completes the induction step.
\end{proof}

\section{A ``new" lifting theorem}

 We start by what is now a special case of Theorem
 \ref{9/10}. \\ Let $C/\cl I$ be a quotient $C^*$-algebra with quotient map $q: C \to C/\cl I$

\begin{lem}\label{L21}  Let $E \subset A$  a   f.d. subspace.  
Then 
any $u: E \to C/\cl I$
admits a lifting $\hat u: E \to C$ such that
$$\|\hat u \|_{{sb}} = \|  u \|_{{sb}} .$$
\end{lem}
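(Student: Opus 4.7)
The inequality $\|\hat u\|_{sb} \ge \|u\|_{sb}$ is automatic: the quotient map $q : C \to C/\cl I$ is a $*$-homomorphism, and the $sb$-analogue of the composition estimate in Remark~\ref{rc} gives $\|u\|_{sb} = \|q\hat u\|_{sb} \le \|\hat u\|_{sb}$. So the task reduces to producing a lifting with $\|\hat u\|_{sb} \le r := \|u\|_{sb}$.

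My plan is to apply the (iii)$\Rightarrow$(i) direction of Theorem~\ref{9/10} with $X = E$ to the admissible class
$$\cl F_r(E, D) := \{v : E \to D \mid \|v\|_{sb} \le r\}, \qquad D \ \text{a } C^*\text{-algebra}.$$
The hypotheses of Remark~\ref{2/12} hold for $\cl F_r$: admissibility follows because $\Phi(a,b) = \sigma^{1/2} a \sigma^{1/2} + (1-\sigma)^{1/2} b (1-\sigma)^{1/2}$ is u.c.p. and so has $\|\Phi\|_{mb} \le 1$ (Remark~\ref{rw}), and $\|(f,g)\|_{sb} = \max(\|f\|_{sb}, \|g\|_{sb})$ (because $D_i \otimes_{\max}(C\oplus C)$ splits as an $\ell_\infty$-direct sum), so $\Phi \circ (f,g)$ stays in $\cl F_r$ by the $sb$-analogue of Remark~\ref{rc}; pointwise closure of $\cl F_r$ is lower semicontinuity of $\|\cdot\|_{sb}$ along pointwise convergence, which reduces to a finite-sum argument since any $t \in \ell_\infty(\{D_i\}) \otimes E$ is a finite algebraic tensor; stability under post-composition with $*$-homomorphisms uses the same composition inequality.

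The core step is to verify condition (iii) of Theorem~\ref{9/10} for $\cl F_r$: for any $C^*$-algebra $C$ and any $v \in \cl F_r(E, C^{**})$, there is a net $v_\alpha \in \cl F_r(E, C)$ with $v_\alpha \to v$ pointwise weak* into $C^{**}$. This will follow unconditionally from Remark~\ref{fg}/eq.~\eqref{gty}, which supplies the contractive inclusion $SB(E, C^{**}) \hookrightarrow SB(E, C)^{**}$. By Goldstine, the image of $v$ in $B_r(SB(E,C)^{**})$ is the weak* limit of a net in $B_r(SB(E,C)) = \cl F_r(E, C)$. For each $x \in E$ and $\varphi \in C^*$ the functional $w \mapsto \varphi(w(x))$ lies in $SB(E, C)^*$ (since $|\varphi(w(x))| \le \|\varphi\|\|x\|\|w\|_{sb}$), so weak* convergence in $SB(E,C)^{**}$ forces $\varphi(v_\alpha(x)) \to \varphi(v(x))$, i.e., pointwise weak* convergence into $C^{**}$.

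Once condition (iii) is established, the (iii)$\Rightarrow$(i) implication of Theorem~\ref{9/10} applied to $u \in \cl F_r(E, C/\cl I)$ produces the desired $\hat u \in \cl F_r(E, C)$ satisfying $q \hat u = u$ and $\|\hat u\|_{sb} \le r$. The main potential obstacle is the bookkeeping that identifies weak* convergence in $SB(E,C)^{**}$ with pointwise weak* convergence into $C^{**}$ under the embedding \eqref{gty}, but this is settled by the evaluation-functional argument above; everything else is a direct application of results already in the paper.
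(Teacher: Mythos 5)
Your proof is correct and follows essentially the same route as the paper: the paper's own proof is exactly the application of (iii)\,$\Rightarrow$\,(i) in Theorem \ref{9/10} with $X=E$ to the class $\{v:E\to C\mid \|v\|_{sb}\le 1\}$, with condition (iii) supplied by \eqref{gty}. You merely spell out the details the paper leaves implicit (admissibility, pointwise closedness, the Goldstine/evaluation-functional bookkeeping, and the trivial reverse inequality), all of which check out.
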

\begin{proof} 
 
By (iii) $\Rightarrow$ (i) in  Theorem
 \ref{9/10} 
 it suffices  to show that  the classes $$\cl F(E,C)=\{u:E \to C\mid \|u \|_{{sb}}\le 1\}$$
 satisfy the assertion (iii) in Theorem
 \ref{9/10} with $X=E$. This is precisely what \eqref{gty} says.
 \end{proof}

\begin{rem}\label{r21} In the situation of Lemma \ref{L21}, consider
 $u: E \to C/\cl I$. We claim that 
 if   the 
 map ${MB}(E,C^{**})\to  {MB}(E,C)^{**}$ is contractive
   then
   $u$
admits a lifting $\hat u: E \to C$ such that
$$\|\hat u \|_{{mb}} = \|  u \|_{{mb}} .$$
Indeed, just as in the preceding proof, we may apply (iii) $\Rightarrow$ (i) in 
Theorem
 \ref{9/10} to the class $$\cl F(E,C)=\{u:E \to C\mid \|u \|_{{mb}}\le 1\}.$$
\end{rem}

\begin{thm}\label{lift} 
Let $X\subset A$ be a separable subspace of a $C^*$-algebra $A$.
Then any $u\in {SB}(X, C/\cl I)$ admits a lifting
$\hat u: X \to C$ with $\|\hat u\|_{{SB}(X,C)} =\|  u\|_{{SB}(X,C/\cl I)}.$\\
If $A$ satisfies the property \eqref{pty},
then any $u\in {MB}(X, C/\cl I)$ admits a lifting
$\hat u: X \to C$ with $\|\hat u\|_{{MB}(X,C)} =\|  u\|_{{MB}(X,C/\cl I)}.$
\end{thm}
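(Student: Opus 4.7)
The approach combines the finite-dimensional lifting result of Lemma \ref{L21}, the extension Theorem \ref{t12}, and Arveson's principle to pass from local to global. The main obstacle is arranging an exact equality of $sb$-norms rather than an estimate up to a factor $(1+\vp)$; this will be handled by a simple scaling trick together with the pointwise lower semicontinuity of $\|\cdot\|_{sb}$.

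Set $\lambda:=\|u\|_{SB(X,C/\cl I)}$ and assume $\lambda>0$. Using separability of $X$, I would fix an increasing sequence $(E_n)_{n\ge 1}$ of finite-dimensional subspaces with $\bigcup_n E_n$ dense in $X$, and a sequence $\vp_n\searrow 0$. For each $n$, Lemma \ref{L21} applied to the restriction $u_{|E_n}$ yields a lifting $v_n:E_n\to C$ with $\|v_n\|_{sb}=\|u_{|E_n}\|_{sb}\le \lambda$, and Theorem \ref{t12} then extends $v_n$ to a map $\tilde v_n:X\to C$ satisfying $\|\tilde v_n\|_{sb}\le (1+\vp_n)\lambda$. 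Set $w_n:=(1+\vp_n)^{-1}\tilde v_n$, so that each $w_n$ lies in the class $\cl F:=\{w:X\to C\mid \|w\|_{sb}\le \lambda\}$.

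Next I would verify that $\cl F$ is admissible in the sense of \S\ref{arvp} and pointwise closed. Admissibility follows from the same argument already given in \S\ref{arvp} for $B_{MB(E,C)}$: the u.c.p. compression map $\psi:C\oplus C\to C$ is automatically contractive for the $sb$-norm (by Remark \ref{rc} applied to the $sb$-setting, together with the fact that direct sums interact well with the maximal tensor product). Pointwise closedness follows from the lower semicontinuity of $\|\cdot\|_{sb}$ under pointwise convergence, which is clear from \eqref{ptyo} since for any fixed $t$ in an algebraic tensor product $\ell_\infty(\{D_i\})\otimes E$ ($E$ a finite-dimensional subspace of $X$) the quantity $\|(Id\otimes w)(t)\|_{\max}$ depends continuously on $w_{|E}$. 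Moreover $qw_n\to u$ pointwise on $X$: for $x\in E_m$ and $n\ge m$ one has $qw_n(x)=u(x)/(1+\vp_n)\to u(x)$, and the uniform bound $\|w_n\|\le \|w_n\|_{sb}\le \lambda$ together with $\|u\|\le \lambda$ extends this convergence to all of $X$ by density.

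Arveson's principle applied to $\cl F$ then produces some $\hat u\in \cl F$ with $q\hat u=u$, whence $\|\hat u\|_{sb}\le \lambda$. The reverse inequality $\|u\|_{sb}\le \|\hat u\|_{sb}$ is automatic, since the quotient map $q$ induces contractions $Id_D\otimes q: D\otimes_{\max} C\to D\otimes_{\max}(C/\cl I)$ for every $D$, so that $\|q\circ v\|_{sb}\le \|v\|_{sb}$ for any $v$. This proves the first assertion. For the second, under hypothesis \eqref{pty} Theorem \ref{t3} holds, hence by Remark \ref{fg} we have $\|\cdot\|_{SB(X,\cdot)}=\|\cdot\|_{MB(X,\cdot)}$ (with target $C$ or $C/\cl I$), and the $MB$-lifting reduces to the $SB$-lifting just established. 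Equivalently, one may repeat the argument above with Theorem \ref{ex} and Remark \ref{r21} replacing Theorem \ref{t12} and Lemma \ref{L21}.
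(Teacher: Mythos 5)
Your proof is correct and follows essentially the same route as the paper: finite-dimensional lifting via Lemma \ref{L21}, extension to $X$ via Theorem \ref{t12} (the paper cites Theorem \ref{ex} here, but \ref{t12} is indeed the right reference for the $SB$-case since it needs no hypothesis on $A$), normalization, and Arveson's principle applied to the admissible, pointwise-closed ball of $SB(X,C)$. Your choice to let $\vp_n\to 0$ along the sequence $(E_n)$ so that $qw_n\to u$ exactly is a slightly cleaner bookkeeping of the same argument, and the reduction of the $MB$-statement to the $SB$-statement via Remark \ref{fg} matches the paper.
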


\begin{proof} Let $u\in SB(X, C/\cl I)$. Let $E\subset X$ be a f.d. subspace.
By Lemma \ref{L21}, there is $u^E: E \to C$ lifting $u_{|E}$
with $\|u^E\|_{{SB}(E,C)}\le \| u_{|E}\|_{{SB}(E,C/\cl I)} \le \| u\|_{{SB}(X,C/\cl I)}.$ Let $\vp >0$.
By Theorem \ref{ex}, the map $u^E$ admits an extension
$\widetilde{u^E}: X \to C$ such that
$\|\widetilde{u^E}\|_{{SB}(X,C)} \le (1+\vp) \|u^E\|_{{SB}(E,C)}\le (1+\vp)\| u\|_{{SB}(X,C/\cl I)}.$
Since for any $x\in X$ we have $x\in F$ for some f.d.  $F\subset X$, we have  $q\widetilde{u^E}(x) =u(x)$ whenever $E\supset F$.
Therefore, the map $u:X \to C/\cl I$ is in the pointwise closure
of the liftable maps  $\{ (1+\vp)^{-1} q\widetilde{u^E} \mid E\subset X\}$.
By normalization, the liftings
$\{ (1+\vp)^{-1} \widetilde{u^E} \mid E\subset X\}$
 are in the unit ball of ${SB}(X,C)$ and the latter is an admissible class.
Thus the conclusion
 of the first part 
 follows from Arveson's principle.
 The second part is obvious since \eqref{pty} implies that
 ${{MB}(E,C)}={{SB}(E,C)}$ isometrically for any $C$.
  \end{proof}

Let $A,C$ be unital $C^*$-algebras. Let $E\subset A$ be an operator system.
Recall that $\cl F_{ump}(E,C)$ denotes
the class of unital maps
 in $MP(E,C)$ 
 (see Remarks \ref{rw1} or \ref{r28/11}).

 \begin{thm}\label{26/11}
 Assume that $A$ satisfies  the conclusion of Theorem \ref{t3}.
  Let $X\subset A$ be a separable operator system.
 Any $u\in  \cl F_{ump}( X , C^{**}) $ is the pointwise weak* limit
 of a net in $\cl F_{ump}( X , C) $.
 \end{thm}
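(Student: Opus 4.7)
The plan is to combine the finite-dimensional weak* approximation of Corollary \ref{c28/11} with the approximate extension result of Theorem \ref{t28/11}. Observe that a net in $\cl F_{ump}(X,C)$ converges pointwise weak* to $u$ if and only if, for every finite set $F\subset X$, every finite set $\Phi\subset C^*$, and every $\vp>0$, one can eventually produce a u.m.p. map $v\colon X\to C$ such that $|\phi(v(x)-u(x))|<\vp$ for all $x\in F$ and $\phi\in\Phi$. So I would organize the argument around the directed set $J$ of triples $\alpha=(F,\Phi,\vp)$, ordered in the obvious way.

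For a given $\alpha\in J$, the first step is to select a finite-dimensional operator subsystem $E\subset X$ containing $F$. Since $u$ is u.m.p., its restriction $u|_E\colon E\to C^{**}$ is u.m.p. as well. Because $A$ satisfies the conclusion of Theorem \ref{t3}, Corollary \ref{c28/11} applies to $E$ and provides a net of u.m.p. maps $E\to C$ tending pointwise weak* to $u|_E$; from this net I would select a single u.m.p. map $w\colon E\to C$ with $|\phi(w(x)-u(x))|<\vp/2$ for every $x\in F$ and $\phi\in\Phi$.

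The second step is to promote $w$ to a u.m.p. map defined on all of $X$. Here Theorem \ref{t28/11} delivers, for any prescribed $\delta>0$, a u.m.p. map $u_\alpha\colon X\to C$ with $\|u_\alpha|_E-w\|<\delta$. Choosing
$$\delta < \frac{\vp}{2\bigl(1+\max\{\|\phi\|\,\|x\|: x\in F,\ \phi\in \Phi\}\bigr)}$$
and invoking the triangle inequality yields $|\phi(u_\alpha(x)-u(x))|<\vp$ for $x\in F$ and $\phi\in\Phi$, which is exactly the required estimate at stage $\alpha$. Running this construction over all $\alpha\in J$ produces the desired net.

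The only real subtlety is that Theorem \ref{t28/11} gives merely an \emph{approximate} extension rather than an exact one, so one must absorb two independent errors: the finite-dimensional weak* error from Corollary \ref{c28/11} and the norm error introduced by the extension step. Splitting $\vp$ in half and tuning $\delta$ against the functionals in $\Phi$ and the vectors in $F$ handles this cleanly, and the separability of $X$ enters only through the hypothesis of Theorem \ref{t28/11}.
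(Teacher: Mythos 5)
Your proof is correct and follows essentially the same route as the paper's: restrict to a finite-dimensional operator subsystem, apply Corollary \ref{c28/11} to get a weak* approximation there, then use Theorem \ref{t28/11} to promote it to an approximate u.m.p. map on all of $X$. You are merely more explicit than the paper about indexing the net by triples $(F,\Phi,\vp)$ and balancing the two error terms.
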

 \begin{proof} 
  Let $E\subset X$ be a f.d. operator system.
  By  Corollary \ref{c28/11} there is a net $u_i\in \cl F_{ump}( E , C) $ such that
  $u_i \to u_{|E}$ pointwise weak*.
  By Theorem \ref{t28/11}
  there are maps $v_i\in \cl F_{ump}( X , C) $ such that 
  $\|{v_i}_{|E} -u_i\|\to 0$, and hence $(v_i-u)_{|E} \to 0$ pointwise weak*.
  This implies that $u$ is in the pointwise weak* closure
  of $\cl F_{ump}( X , C) $.
 \end{proof}
 
  By Theorem \ref{9/10} we may state:
  \begin{cor}  If $A$ satisfies \eqref{pty}
  any separable operator system $X\subset A$
  has the $\cl F_{ump}$-LP.
 \end{cor}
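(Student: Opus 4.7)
The plan is to read the corollary as the conclusion of the implication (iii) $\Rightarrow$ (i) of Theorem \ref{9/10} applied to the class $\cl F = \cl F_{ump}$, once condition (iii) has been established. The hypothesis \eqref{pty} gives (via Theorem \ref{t3}) the contractive inclusion $MB(E,C^{**}) \to MB(E,C)^{**}$ for every finite-dimensional $E \subset A$, so the hypothesis of Theorem \ref{26/11} is in force. Theorem \ref{26/11} then says that for every unital $C^*$-algebra $C$ and every $u \in \cl F_{ump}(X,C^{**})$ there is a net in $\cl F_{ump}(X,C)$ converging pointwise weak$^*$ to $u$, which is exactly (iii) of Theorem \ref{9/10}.

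What remains is the routine verification that $\cl F_{ump}$ fits the framework of Theorem \ref{9/10}, i.e.\ that $\cl F_{ump}$ is an admissible, pointwise closed class closed under post-composition with unital $*$-homomorphisms and u.c.p.\ maps (cf.\ Remark \ref{2/12}). Pointwise closure and unitality stability are essentially tautological and are recorded in Remark \ref{r28/11}. For admissibility, given $f,g \in \cl F_{ump}(X,C)$ and $\sigma \in B_{C_+}$, I would first observe that the diagonal map $x \mapsto (f(x),g(x))$ lies in $\cl F_{ump}(X, C \oplus C)$: it is clearly unital, and since $D \otimes_{\max}(C \oplus C) = (D \otimes_{\max} C) \oplus (D \otimes_{\max} C)$ positivity on the two summands passes coordinatewise. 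I would then compose with the u.c.p.\ map $T_\sigma : C \oplus C \to C$ defined by $T_\sigma(a,b) = \sigma^{1/2} a \sigma^{1/2} + (1-\sigma)^{1/2} b (1-\sigma)^{1/2}$; by Remark \ref{rw} any u.c.p.\ map is u.m.p., and by Remark \ref{rc} the composition $T_\sigma \circ (f,g)$ remains u.m.p., which is precisely the admissibility requirement.

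With these verifications in hand, Theorem \ref{9/10} gives (iii) $\Rightarrow$ (i) for $\cl F_{ump}$, so every u.m.p.\ map $u : X \to C/\cl I$ into a quotient of a unital $C^*$-algebra admits a u.m.p.\ lifting $\hat u : X \to C$, which is the assertion that $X$ has the $\cl F_{ump}$-LP. There is no genuine obstacle: all the analytical work — the local reflexivity principle of Theorem \ref{t3}, the perturbation from m.b.\ contractive approximants to u.m.p.\ ones (Lemma \ref{v26} and Corollary \ref{c28/11}), and the inductive extension along a filtration of $X$ (Theorem \ref{t28/11}) — has already been bundled into Theorem \ref{26/11}, so the corollary is an invocation of Arveson's principle via the ready-made framework of Theorem \ref{9/10}.
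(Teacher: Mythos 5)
Your proposal is correct and follows exactly the paper's route: the paper also deduces the corollary by feeding Theorem \ref{26/11} (as condition (iii)) into the implication (iii) $\Rightarrow$ (i) of Theorem \ref{9/10} for the class $\cl F_{ump}$, whose admissibility and pointwise closedness are recorded in Remark \ref{r28/11}. Your explicit verification of admissibility via the diagonal map and $T_\sigma$ is the same mechanism the paper uses earlier for $B_{MB(E,C)}$, so nothing differs in substance.
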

 
 \begin{cor}\label{17/11}  Assume $A,C$   unital and $A$   separable.  If  $A$ 
 satisfies \eqref{pty}
 or merely the conclusion of Theorem \ref{t3}, any  unital $u\in {CP}(A, C/\cl I)$
admits a  unital lifting
$\hat u\in {CP}(A, C )$.
 \end{cor}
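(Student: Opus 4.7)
The key observation is that, for c.p. maps defined on the \emph{entire} $C^*$-algebra $A$ (rather than a proper operator subsystem), the m.b. norm coincides with the operator norm. Concretely, if $u\colon A\to C/\cl I$ is unital c.p., then Remark \ref{rw} gives $u\in MB(A,C/\cl I)$ with $\|u\|_{mb}=\|u\|=\|u(1)\|=1$. Because $u$ is unital with $\|u\|_{mb}=1$, Remark \ref{rw1} applies verbatim to show that $u$ is in fact m.p., i.e.\ $u\in \cl F_{ump}(A,C/\cl I)$.

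Next I would take $X=A$ in Theorem \ref{26/11}: since $A$ is separable and unital, it is a separable operator system in itself, so Theorem \ref{26/11} tells us that every element of $\cl F_{ump}(A,C^{**})$ is a pointwise weak* limit of a net in $\cl F_{ump}(A,C)$. Combining this with the implication (iii) $\Rightarrow$ (i) of Theorem \ref{9/10} --- for which one needs only to know that $\cl F_{ump}$ is an admissible pointwise closed class satisfying the hypothesis of Remark \ref{2/12}, a routine check already recorded in Remark \ref{r28/11} --- one concludes that $A$ has the $\cl F_{ump}$-LP. Specialising to our $u$ yields a lifting $\hat u\in \cl F_{ump}(A,C)$, that is, a unital m.p. lifting.

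Finally, I would use that any m.p.\ map is automatically c.p.: taking $D=M_n$ in the definition forces $\mathrm{Id}_{M_n}\otimes \hat u$ to be positive for each $n$. Hence $\hat u$ is a unital c.p.\ lifting of $u$, as required. The hypothesis on $A$ is used only through Theorem \ref{26/11}, which in turn rests solely on the conclusion of Theorem \ref{t3}; thus the argument works under either version of the hypothesis.

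The whole difficulty has already been absorbed into the preceding machinery (extension in $MB$, Corollary \ref{c28/11}, and Theorem \ref{t28/11}); the genuine subtlety at this stage is the passage \emph{u.c.p.} $\Rightarrow$ \emph{u.m.p.}, which would fail for a proper operator subsystem $E\subsetneq A$ (cf.\ Remark \ref{rw}) but is automatic on the whole of $A$. This is what allows us to bridge between the $\cl F_{ump}$-LP proved above and the ``classical'' lifting statement in terms of u.c.p.\ maps.
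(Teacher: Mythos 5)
Your proposal is correct and follows essentially the same route as the paper: the paper likewise deduces the corollary from the $\cl F_{ump}$-LP (obtained via Theorem \ref{26/11} and Theorem \ref{9/10}) together with the observation, via Remarks \ref{rw} and \ref{rw1}, that a unital map on all of $A$ is c.p.\ if and only if it is u.m.p. Your explicit emphasis on why this equivalence holds only for $E=A$ and not for proper subsystems matches the paper's own framing of where the subtlety lies.
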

\begin{proof}
  By Remarks \ref{rw} and \ref{rw1},
  a unital map $u: A \to C$  is c.p. if and only if 
  $u\in \cl F_{ump}(A, C )$.
   \end{proof}

\section{Proof of Theorem \ref{L1} }\label{pf}

We first  note that $\C=C^*(\F_\infty)$ 
satisfies the property \eqref{pty}.

\begin{lem}\label{lpty} 
For any free group $\F$ (in particular for $\F=\F_\infty$)
the $C^*$-algebra $ C^*(\F)$  
satisfies  \eqref{pty}.
\end{lem}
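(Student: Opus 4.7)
The plan is to combine Proposition \ref{rty1} with the explicit factorization formula for the max-tensor norm on $D \otimes_{\max} \ell_1^n$ alluded to in the statement of the hint.

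First, by Proposition \ref{rty1} we may assume $D_i = \C := C^*(\F_\infty)$ for all $i \in I$. Since $t$ lies in the algebraic tensor product, it lies in $\ell_\infty(\{\C_i\}) \otimes E$ for some finite-dimensional subspace $E \subset C^*(\F)$. Approximating each element of $E$ by finite linear combinations of the canonical unitaries $u_g$ (the group algebra $\mathbb{C}[\F]$ being norm-dense in $C^*(\F)$), and noting that both sides of the desired inequality are continuous in $t$ under such perturbations with errors controlled uniformly across coordinates $i$ (the error in the fiber $\C_i \otimes_{\max} C^*(\F)$ is dominated by $\sum_k \|b_k\|_{\ell_\infty}\|c_k - c_k^\varepsilon\|_{C^*(\F)}$ independently of $i$), we reduce to the case $t = \sum_{j=1}^n b_j \otimes u_{g_j}$ with $\{g_1, \ldots, g_n\}$ a set of free generators of a free subgroup of $\F$; this is precisely the ``$\ell_1^n \subset \C$'' setup referenced in the paper's hint.

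The main ingredient is the Haagerup--Pisier type factorization formula: for any $C^*$-algebra $D$ and any $(d_j)_{j=1}^n \in D^n$,
\begin{equation*}
\bigl\|\textstyle\sum_j d_j \otimes u_{g_j}\bigr\|_{D \otimes_{\max} C^*(\F)} = \inf\bigl\{\bigl\|\textstyle\sum_j \alpha_j \alpha_j^*\bigr\|^{1/2} \cdot \bigl\|\textstyle\sum_j \beta_j^* \beta_j\bigr\|^{1/2} : d_j = \alpha_j\beta_j \text{ in } D\bigr\}.
\end{equation*}
The direction ``$\le$'' is elementary: writing $\sum d_j \otimes u_{g_j} = \sum (\alpha_j \otimes 1)(\beta_j \otimes u_{g_j})$ in $D \otimes_{\max} C^*(\F)$ and applying the standard Cauchy--Schwarz inequality for a $C^*$-algebra (noting that $(\beta_j \otimes u_{g_j})^*(\beta_j \otimes u_{g_j}) = \beta_j^*\beta_j \otimes 1$ since the $u_{g_j}$ are unitaries) gives the bound. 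The matching lower bound is the nontrivial direction, specific to free unitaries in the full group $C^*$-algebra.

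With the formula in hand, the lemma follows by a coordinate-wise assembly. Set $K := \sup_i \|t_i\|_{\C \otimes_{\max} C^*(\F)}$ and fix $\varepsilon > 0$. For each $i$, choose a factorization $b_j^{(i)} = \alpha_j^{(i)}\beta_j^{(i)}$ in $\C$ with $\|\sum_j \alpha_j^{(i)}(\alpha_j^{(i)})^*\|^{1/2}\|\sum_j(\beta_j^{(i)})^*\beta_j^{(i)}\|^{1/2} < K + \varepsilon$; by rescaling $\alpha_j^{(i)} \mapsto \lambda_i\alpha_j^{(i)}$, $\beta_j^{(i)} \mapsto \lambda_i^{-1}\beta_j^{(i)}$ for an appropriate scalar $\lambda_i > 0$, one balances so that each of the two factor norms individually is bounded by $(K+\varepsilon)^{1/2}$, uniformly in $i$. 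Assembling $\alpha_j := (\alpha_j^{(i)})_{i \in I}$ and $\beta_j := (\beta_j^{(i)})_{i \in I}$ as bounded elements of $\ell_\infty(\{\C_i\})$ yields a factorization $b_j = \alpha_j\beta_j$ with $\|\sum_j \alpha_j\alpha_j^*\|$ and $\|\sum_j \beta_j^*\beta_j\|$ both at most $K + \varepsilon$ (these $\ell_\infty$-norms being the suprema over $i$ of the coordinate quantities). Applying the easy upper-bound direction of the formula in $\ell_\infty(\{\C_i\}) \otimes_{\max} C^*(\F)$ yields $\|t\|_{\max} \le K + \varepsilon$, and sending $\varepsilon \to 0$ concludes the proof. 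The main obstacle is the nontrivial lower-bound direction of the Haagerup--Pisier formula, together with justifying the reduction to the span-of-free-generators case (where the max-isometric embedding $C^*(\F_N) \hookrightarrow C^*(\F_\infty)$ plays a role).
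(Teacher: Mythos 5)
Your factorization formula and the coordinate-wise assembly (choose a near-optimal factorization in each fiber, rebalance the two factors, glue into $\ell_\infty(\{D_i\})$, and apply the easy direction of the formula) are exactly what the paper means by ``it is immediate''; that part of your argument is correct and matches the paper. The gap is in your reduction to the span-of-free-generators case. Density of the group algebra $\CC[\F]$ in $C^*(\F)$ only reduces a general $t$ to one of the form $\sum_g b_g\otimes u_g$ with $g$ ranging over an \emph{arbitrary} finite subset of $\F$, and such a subset is in general not a free basis of the subgroup it generates (consider $\{a,a^2\}$, or any support set containing both some generators and a product of them). The Haagerup-type formula you invoke computes the max-norm only on the span of the unit and a \emph{free} family of unitary generators; it says nothing about the span of $\{u_g\mid g\in S\}$ for a general finite $S\subset\F$. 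So your approximation step does not land you in the setting where the formula applies, and what you have actually proved is \eqref{pty'} only for $t\in\ell_\infty(\{D_i\})\otimes E$ with $E=\mathrm{span}\{1,U_1,\dots,U_n\}$.

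The missing ingredient is precisely the ``main idea of \cite{Pjot}'' that the paper cites for this reduction: one passes from $E$ to all of $C^*(\F)$ not by a density argument but by an extension argument. The estimate you prove on $\ell_\infty(\{D_i\})\otimes E$ is in fact a complete (and unital, positivity-preserving) estimate, because the factorization formula persists with $M_m(D)$ in place of $D$; hence the identity is a u.c.p.\ map from $\ell_\infty(\{D_i\})\otimes E$, with the operator system structure induced by $\ell_\infty(\{D_i\otimes_{\max}A\})$, into $\ell_\infty(\{D_i\})\otimes_{\max}A$. Composing with a faithful representation $\sigma\cdot\rho$ of the latter, extending by Arveson's theorem to a u.c.p.\ map on all of $\ell_\infty(\{D_i\otimes_{\max}A\})$, and observing that the unitaries $1\otimes U_j$ are sent to the unitaries $\rho(U_j)$ (so that they, together with $\ell_\infty(\{D_i\})\otimes 1$ on which the map is multiplicative, lie in the multiplicative domain), one obtains a $*$-homomorphism on the $C^*$-subalgebra generated by $\ell_\infty(\{D_i\})\otimes A$ inside $\ell_\infty(\{D_i\otimes_{\max}A\})$, which forces $\|t\|_{\max}\le\sup_i\|t_i\|_{\max}$ for \emph{all} $t$ in the algebraic tensor product. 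Without some version of this step (or an explicit appeal to \cite{Pjot} or \cite[Th.~9.11]{P6} for the reduction), your argument proves strictly less than the lemma.
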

 \begin{proof} 
 We outline the argument already included in \cite[Th. 9.11]{P6}.
 By the main idea in \cite{Pjot} it suffices to check
 \eqref{pty'} for any $t\in {\ell_\infty(\{D_i\}) \otimes  E}$
 when $E$ is the span of the unit and a finite number 
 of the free unitary generators, say $U_1,\cdots,U_{n}$ of $C^*(\F)$
 (and actually $n=2$ suffices).
 Let $U_0=1$ for convenience.
 It is known (see \cite[p. 257-258]{P4} or \cite[p. 130-131]{P6}) that 
 for any
 $C^*$-algebra $D$ and any $(x_j)_{0\le j\le n}\in D^{n+1}$,  with respect to the inclusion $E\subset \C$, we have
 $$\|\sum x_j \otimes U_j \|_{D \otimes_{\max} E}
 =\inf \{\|\sum a_j^*a_j\|^{1/2} \|\sum b_j^*b_j\|^{1/2} \}$$
 where the infimum runs over all possible   decompositions
 $x_j=a_j^*b_j$ in $D$. Using this characterization,
 it is immediate that 
 $\|t\|_{\ell_\infty(\{D_i\}) \otimes_{\max} E} \le \sup_{i\in I} \|t_i\|_{ D_i \otimes_{\max} E}$.
  \end{proof}

   \begin{proof}[Proof of Theorem \ref{L1}]
  Assume (i).  We may assume that $A$ is unital so  that $A=\C/\cl I$ for some $\cl I$
  and  there is a unital c.p. lifting $r: A\to \C$.
  Then using   
  $$\|(Id_{D_i} \otimes r) (t_i)\|_{\max} \le \|t_i\|_{\max} $$
  for any $t_i \in {D_i} \otimes E$ (see Remark \ref{rw}),
  we easily pass from `` $\C$ satisfies (ii)"
  (which is Lemma \ref{lpty})
  to `` $A$ satisfies (ii)". This proves (i) $\Rightarrow$ (ii).\\
  Conversely,
  assume (ii). 
  By Corollary \ref{17/11},   $A$ has the $\cl F_{ucp}$-LP,
  and hence  (i) holds by Proposition \ref{14/11}.
  \\
  To prove (ii) $\Rightarrow$ (ii)$_+$, we use another fact:
  for any $x=x^*$ with $\|x\|\le 1$  in   $C$ where  $C$ is any unital $C^*$-algebra, we have
  $$x\in C_+ \Leftrightarrow \|1-x\| \le 1.$$
  Using this one easily checks  (ii) $\Rightarrow$ (ii)$_+$.
  Now assume (ii)$_+$.  Using $\|x\|=\|x^*x\|^{1/2}$
  one can reduce checking (ii) when all $t_i$'s are self-adjoint.
  Then using the same fact we obtain (ii)$_+$ $\Rightarrow$ (ii).
  \\Note: an alternate route is to use 
  $$\|x\|\le 1  \Leftrightarrow  \left\|\left(\begin{matrix} 1 \ \ x \\ x^* \ 1
  \end{matrix}\right) \right\|_{M_2(C)} \le 1 .$$
  Using this with $M_2(D_i)$ in place of $D_i$,
  one easily checks  (ii)$_+$ $\Rightarrow$ (ii).
 \end{proof}

Actually, the preceding proof can be adapted to
use only  the conclusion of Theorem \ref{t3}
to obtain the LP, therefore:

\begin{thm}\label{t22} A separable $C^*$-algebra $A$ has the LP (or equivalently the property in \eqref{pty}) if and only if 
the natural map $MB(E,C^{**}) \to MB(E,C)^{**}$ is contractive for 
all f.d. subspaces $E\subset A$ and all $C^*$-algebras $C$.
In fact,  it suffices to have this in the case $C=\C$.
\end{thm}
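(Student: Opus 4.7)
The plan is to prove the two directions separately; the forward direction is an immediate appeal to earlier results, while the reverse direction requires assembling several previous lemmas and carefully tracking that only the $C=\C$ case of the hypothesis is ever used.

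For the forward direction, suppose $A$ has the LP. By Theorem \ref{L1}, $A$ satisfies \eqref{pty}, and Theorem \ref{t3} then directly supplies the contractive inclusion $MB(E,C^{**}) \to MB(E,C)^{**}$ for every f.d. $E \subset A$ and every $C^*$-algebra $C$; in particular the $C=\C$ case holds.

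For the reverse direction, assume the contractive inclusion holds only for $C=\C$. The first step is to observe that the chain Corollary \ref{c28/11} $\to$ Theorem \ref{t28/11} $\to$ Theorem \ref{26/11} $\to$ Corollary \ref{17/11} only invokes the conclusion of Theorem \ref{t3} for the specific target $C^*$-algebra appearing in each statement. Fixing that target to be $\C$ throughout, I obtain Corollary \ref{c28/11} for $C=\C$ (pointwise weak$^*$-approximation of u.m.p. maps $E \to \C^{**}$ by u.m.p. maps $E \to \C$), then Theorem \ref{t28/11} for $C=\C$, then Theorem \ref{26/11} for $C=\C$, and finally---via the implication (iii) $\Rightarrow$ (i) in Theorem \ref{9/10} applied to the admissible class $\cl F_{ump}(\,\cdot\,,\C)$---the conclusion that every unital c.p.\ map $u:A \to \C/\cl I$ admits a unital c.p.\ lifting $\hat u:A \to \C$.

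The last step converts liftings into $\C$ into the full LP. By Proposition \ref{14/11} (passing to the unitization if needed), it suffices to produce a u.c.p.\ lifting of every unital c.p.\ map $u:A \to C/\cl I$ with $C$ a unital $C^*$-algebra. Since $A$ is separable, a standard reduction lets me replace $C$ by a separable unital $C^*$-subalgebra containing a chosen preimage of $u(A)$, so I may assume $C$ separable. Then $C = \C/\cl M$ for some $\cl M$, and the preimage $\cl N \supset \cl M$ of $\cl I$ under $\C \to C$ identifies $C/\cl I$ with $\C/\cl N$. Applying the previous step to $u$ regarded as a unital c.p.\ map $A \to \C/\cl N$ produces a lifting $\hat u:A \to \C$, and composing with the quotient $\C \to C$ yields the desired u.c.p.\ lifting of $u$ to $C$. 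The main obstacle is really just the bookkeeping in the reverse direction---verifying that each step of the chain depends on the conclusion of Theorem \ref{t3} only through the particular target $C$ appearing in its own statement---after which the separability reduction together with Proposition \ref{14/11} completes the argument.
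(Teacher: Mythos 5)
Your proof is correct and follows essentially the same route as the paper: the forward direction is Theorem \ref{L1} together with Theorem \ref{t3}, and the reverse direction is exactly the paper's appeal to Corollary \ref{17/11} (whose chain of dependencies you rightly observe uses the conclusion of Theorem \ref{t3} only for the target algebra at hand, which is what justifies restricting to $C=\C$), finished off by Proposition \ref{14/11} and the reduction to separable $C$ realized as a quotient of $\C$. You have merely made explicit the bookkeeping that the paper's two-line proof leaves to the reader.
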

\begin{proof} Assume $A$ unital for simplicity. By Theorem \ref{t3}
 it suffices to show the  ``if" part. The latter is contained in
   Corollary \ref{17/11}.  To justify the last assertion,
we  reduce the LP to the case when $C$ is separable, and 
view $C$ as a quotient of $\C$.
\end{proof}

\section{LP and Ultraproducts}\label{uuty}

Let $\cl U$ be an ultrafilter on a set $I$.
Recall that the ultraproduct $(D_i)_{\cl U}$ of a family $(D_i)_{i\in I}$ of $C^*$-algebras 
is defined as the quotient 
$$ \ell_\infty(\{D_i\})/c_0^\cl U(\{D_i\})$$
where $c_0^\cl U(\{D_i\})=\{x\in \ell_\infty(\{D_i\}) \mid \lim_\cl U \|x_i\|_{D_i}
=0\}$.\\
It is usually denoted as $\prod\nl_{i\in I} D_i/\cl U$.
For short we will also denote it just  by $(D_i)_\cl U$.\\
Recall that, for each $i\in I$, we have trivially  a contractive morphism $\ell_\infty(\{D_i\}) \to D_i$ and hence also 
$\ell_\infty(\{D_i\})\otimes_{\max} A \to D_i\otimes_{\max} A $, whence a contractive morphism
\begin{equation}  \label{uty0}\Psi: \ell_\infty(\{D_i\}) \otimes_{\max} A \to \ell_\infty(\{D_i \otimes_{\max} A\})  ,\end{equation} 
which, after passing to the quotient gives us a contractive morphism
$$\ell_\infty(\{D_i\}) \otimes_{\max} A \to  (D_i \otimes_{\max} A )_\cl U  .$$
The latter morphism obviously vanishes
on $c_0^\cl U(\{D_i\}) \otimes  A$, and hence also on its closure in $\ell_\infty(\{D_i\}) \otimes_{\max} A$,
i.e. on $c_0^\cl U(\{D_i\}) \otimes_{\max} A$. Thus we obtain a natural contractive morphism
$$(\ell_\infty(\{D_i\}) \otimes_{\max} A)  / (c_0^\cl U(\{D_i\}) \otimes_{\max} A) \to
(D_i \otimes_{\max} A )_\cl U .$$
By \eqref{25/8} we have for any $C^*$-algebra $A$
$$(D_i)_{\cl U} \otimes_{\max} A=   (\ell_\infty(\{D_i\}) \otimes_{\max} A)/(c_0^\cl U(\{D_i\}) \otimes_{\max} A) , $$
whence a natural contractive morphism
\begin{equation}  \label{uty1}
\Phi: (D_i)_{\cl U} \otimes_{\max} A \to (D_i \otimes_{\max} A)_\cl U.\end{equation}

In this section we will show that  \eqref{uty1} is isometric   for any $(D_i)_{\cl U}$
if and only if $A$ has the LP.
\begin{thm}\label{ulty}
Let $A$ be a separable $C^*$-algebra.
  The following are equivalent:\\
  \item{\rm (i)} The algebra  $A$ satisfies \eqref{pty} (i.e. $A$ has the   LP).\\
  \item{\rm (ii)} For any family $(D_i)_{i\in I}$ 
 of $C^*$-algebras  and any ultrafilter on $I$ we have
 $$\forall t\in \left[\prod\nl_{i\in I} D_i/\cl U\right] \otimes A \qquad
 \|t\|_{\max}= \lim\nl_\cl U \| t_i \|_{D_i \otimes_{\max} A}.$$
 In other words we have
 a natural isometric embedding
 $$[\prod\nl_{i\in I} D_i/\cl U] \otimes_{\max} A \subset \prod\nl_{i\in I} [ {D_i \otimes_{\max} A}  ]/\cl U.$$
           \end{thm}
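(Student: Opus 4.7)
The plan is to prove the two implications separately. The direction (i) $\Rightarrow$ (ii) will be a transfer of isometry to a quotient using \eqref{pty} together with \eqref{25/8}, and the direction (ii) $\Rightarrow$ (i) will be obtained by verifying \eqref{pty} via a central disintegration of commuting representations into fibers indexed by ultrafilters.

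For (i) $\Rightarrow$ (ii), assume $A$ satisfies \eqref{pty}, so that $\Psi$ in \eqref{uty0} is isometric. By \eqref{25/8}, $(D_i)_{\cl U} \otimes_{\max} A = [\ell_\infty(\{D_i\}) \otimes_{\max} A]/[c_0^{\cl U}(\{D_i\}) \otimes_{\max} A]$, so for $t\in \ell_\infty(\{D_i\}) \otimes A$ one has $\|\pi_{\cl U}(t)\|_{(D_i)_{\cl U} \otimes_{\max} A} = \inf_{s\in c_0^{\cl U}(\{D_i\})\otimes_{\max} A} \|t-s\|_{\ell_\infty(\{D_i\})\otimes_{\max} A}$. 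One first observes that $\lim_{\cl U}\|s_i\|_{D_i\otimes_{\max} A}=0$ for any $s$ in $c_0^{\cl U}(\{D_i\}) \otimes_{\max} A$, by approximating $s$ in max-norm by algebraic tensors in $c_0^{\cl U}(\{D_i\}) \otimes A$ (for which it is immediate) and using contractivity of $\Psi$. Combined with contractivity of coordinate projections, this yields the lower bound $\|\pi_{\cl U}(t)\|_{\max}\ge \lim_{\cl U}\|t_i\|_{\max}$. For the reverse inequality, use \eqref{pty} to rewrite $\|t-s\|_{\ell_\infty(\{D_i\})\otimes_{\max} A} = \sup_i \|(t-s)_i\|_{D_i\otimes_{\max} A}$; given $\varepsilon>0$, the set $J=\{i:\|t_i\|_{D_i\otimes_{\max} A}<\lim_{\cl U}\|t_i\|+\varepsilon\}$ lies in $\cl U$, and truncating an expansion $t=\sum c_k\otimes a_k$ by $s:=\sum(c_k\mathbf 1_{I\setminus J})\otimes a_k\in c_0^{\cl U}(\{D_i\})\otimes A$ gives $\sup_i\|(t-s)_i\|\le \lim_{\cl U}\|t_i\|+\varepsilon$.

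For (ii) $\Rightarrow$ (i), I verify \eqref{pty}. Given $t\in\ell_\infty(\{D_i\})\otimes A$ with $s:=\sup_i\|t_i\|_{D_i\otimes_{\max} A}$, it suffices to show $\|\sigma\cdot\rho(t)\|\le s$ for every commuting pair $\sigma:\ell_\infty(\{D_i\})\to B(H)$, $\rho:A\to B(H)$ of $*$-homomorphisms. By Remark \ref{rty0} one may assume the $D_i$ are unital, and by restricting to a cyclic subspace one may further assume $H$ separable. The central copy of $\ell_\infty(I)\subset \ell_\infty(\{D_i\})$, obtained by $(\lambda_i)\mapsto (\lambda_i \cdot 1_{D_i})$, commutes with everything in $\ell_\infty(\{D_i\})$, so its image under $\sigma$ generates a commutative von Neumann subalgebra $\cl Z\subset B(H)$ commuting with both $\sigma(\ell_\infty(\{D_i\}))$ and $\rho(A)$. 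A central direct-integral decomposition $H=\int^\oplus H_\omega\,d\mu(\omega)$ over $\Omega=\widehat{\cl Z}$ produces commuting fiber representations $\sigma_\omega$ and $\rho_\omega$. Each $\omega$ restricts to a character of $\ell_\infty(I)$, hence determines an ultrafilter $\cl U_\omega$ on $I$. The crucial step is to check that $\sigma_\omega$ factors through $(D_i)_{\cl U_\omega}$: for $x\in c_0^{\cl U_\omega}(\{D_i\})$ and $\varepsilon>0$, the spectral projection $p_\varepsilon:=\mathbf 1_{\{i:\|x_i\|<\varepsilon\}}\in\ell_\infty(I)$ belongs to $\cl U_\omega$, whence $\sigma_\omega(p_\varepsilon)=1$ and $\|\sigma_\omega(x)\|=\|\sigma_\omega(xp_\varepsilon)\|\le \|xp_\varepsilon\|\le \varepsilon$, forcing $\sigma_\omega(x)=0$. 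Then $\sigma_\omega$ and $\rho_\omega$ assemble into a commuting representation of $(D_i)_{\cl U_\omega}\otimes_{\max} A$, and hypothesis (ii) gives $\|\sigma_\omega\cdot\rho_\omega(t)\|\le \|\pi_{\cl U_\omega}(t)\|_{(D_i)_{\cl U_\omega}\otimes_{\max} A}=\lim_{\cl U_\omega}\|t_i\|_{\max}\le s$. The direct-integral norm formula yields $\|\sigma\cdot\rho(t)\|=\mathrm{ess\,sup}_\omega\|\sigma_\omega\cdot\rho_\omega(t)\|\le s$, and taking the supremum over $(\sigma,\rho)$ gives $\|t\|_{\max}\le s$.

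The main obstacle is the central disintegration step in (ii) $\Rightarrow$ (i): one must simultaneously decompose the two commuting representations $\sigma$ and $\rho$ along the central von Neumann subalgebra $\cl Z$, verify measurability of the fiber data, and identify the fiber algebras acting via $\sigma_\omega$ with ultrapowers $(D_i)_{\cl U_\omega}$. These are standard manipulations with direct integrals once separability of $H$ has been arranged, but they require some bookkeeping.
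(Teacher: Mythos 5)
Your argument for (i) $\Rightarrow$ (ii) is correct and is essentially the paper's: both reduce the claim to the isometry of $\Psi$ in \eqref{uty0} granted by \eqref{pty}, together with \eqref{25/8}; the paper packages this as injectivity of the contractive $*$-homomorphism $\Phi$ (hence isometry), while you compute the quotient norm on both sides directly with the truncation $s=\sum (c_k\mathbf 1_{I\setminus J})\otimes a_k$. That half is fine.

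The direction (ii) $\Rightarrow$ (i) is where there is a genuine gap. Your plan rests on disintegrating the commuting pair $(\sigma,\rho)$ over the spectrum of the commutative von Neumann algebra $\cl Z$ generated by $\sigma(\ell_\infty(I))$, so that almost every fiber sees a single ultrafilter $\cl U_\omega$. But von Neumann's reduction theory (measurable fields of Hilbert spaces, the formula $\|\cdot\|=\mathrm{ess\,sup}_\omega\|\cdot\|$, a.e.\ identification of the fibers) requires a separable Hilbert space and a standard Borel base, and neither is available: $\ell_\infty(I)$ is nonseparable, its spectrum $\beta I$ is not metrizable, and the cyclic subspace generated by a vector under the $C^*$-algebra generated by $\sigma(\ell_\infty(\{D_i\}))\cup\rho(A)$ need not be separable. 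The proposed reduction to separable $H$ is self-defeating: to make $H$ separable you must cut down to a separable $C^*$-subalgebra of $\ell_\infty(I)$, and then the points of the (now metrizable) spectrum no longer determine ultrafilters on $I$, so the fibers cannot be identified with the ultrapowers $(D_i)_{\cl U_\omega}$ to which hypothesis (ii) applies. There is also an a.e.\ issue: the statement ``$\sigma_\omega$ kills $c_0^{\cl U_\omega}(\{D_i\})$'' quantifies over a family of elements that itself depends on $\omega$, so the exceptional null sets cannot be combined naively (this part is repairable for a fixed $t$ by a finite partition of $[0,\sup_i\|t_i\|]$, but the first two obstructions are structural).

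For comparison, the paper avoids representations altogether: it applies (ii) to the family $(D_J)_{J\in\hat I}$ indexed by the \emph{finite} subsets of $I$ with an ultrafilter refining that net, and the only nontrivial input is that $\psi_1:\ell_\infty(\{D_i\})\to (D_J)_{\cl U}$ is $\max$-isometric, which it gets from \eqref{30/8} of Lemma \ref{ety} --- i.e.\ ultimately from the fact that $\C$ satisfies \eqref{pty}. If you want to salvage your localization idea, the robust implementation is not a direct integral but the $C(X)$-algebra formalism: assuming all algebras unital, $\ell_\infty(I)\otimes 1$ is a central $C^*$-subalgebra of $B=\ell_\infty(\{D_i\})\otimes_{\max}A$, the fiber of $B$ at $\omega\in\beta I$ (the quotient by $\ovl{C_\omega(\beta I)\cdot B}$) is exactly $(D_i)_{\cl U_\omega}\otimes_{\max}A$ by \eqref{25/8}, and the general identity $\|b\|=\sup_{\omega}\|b_\omega\|$ for $C(X)$-algebras (proved by upper semicontinuity of the fiber norm, a partition of unity and the Cauchy--Schwarz inequality $\|\sum g_j c_j\|\le\max_j\|c_j\|$ for central positive $g_j$ with $\sum g_j=1$) then yields \eqref{pty'} directly from (ii). That version needs no separability and no measure theory, and would give a proof of (ii) $\Rightarrow$ (i) genuinely different from the paper's; as written, however, your disintegration step does not go through.
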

            \begin{proof}  Assume (i).
            If $A$ satisfies \eqref{pty},  $\Psi$ in \eqref{uty0}  is isometric.
  We claim that \eqref{uty1} must also be isometric, or equivalently injective.
Indeed, let $z\in (D_i)_{\cl U} \otimes_{\max} A$ such that $\Phi(z)=0$.
Let $z'\in \ell_\infty(\{D_i\}) \otimes_{\max} A$ lifting $z$.
Then $\Psi(z') \in c_0^\cl U(\{D_i \otimes_{\max} A\})$.
This means that for any $\vp>0$ there is a set $\alpha\in \cl U$
such that $\sup_{i\in \a} \|\Psi(z')_i\|_{D_i \otimes_{\max} A} <\vp$.
Let $P_\a: \ell_\infty(\{D_i\}) \to \ell_\infty(\{D_i\})$ be the  projection onto $\a$
defined for any $x\in \ell_\infty(\{D_i\} $  by $P_\a (x)_i=x_i$ if $i\in \a$ and
$P_\a (x)_i=0$ otherwise. Since \eqref{uty0}  is isometric we have
$$\| [P_\a \otimes Id_A](z') \|= \| \Psi ([P_\a \otimes Id_A](z') )\|=\sup\nl_{i\in \a}\|\Psi(z')_i \|_{D_i \otimes_{\max} A} \le \vp .$$
 Denoting by 
  $q: \ell_\infty(\{D_i\})\otimes_{\max} A \to (D_i)_{\cl U} \otimes_{\max} A$
  the   quotient map, since $q(z')=z$, this implies
$$\|z\|=\|q(z')\|= \|q([P_\a \otimes Id_A] (z'))\| \le \| [P_\a \otimes Id_A](z') \| \le \vp .$$
Since $\vp>0$ is arbitrary, we conclude that $z=0$, proving our claim.
This proves (i) $\Rightarrow$ (ii).\\   
Assume (ii). Consider the set $\hat I$ formed of all the finite subsets of $I$,
viewed as directed with respect to the inclusion order. 
Let $\cl U$ be a non trivial ultrafilter on $\hat I$ refining this net.
Let $d=(d_i)_{i\in I} \in \ell_\infty(\{D_i\mid i\in I\})$.
For any $J\in \hat I$ we set $D_J=  \ell_\infty(\{D_i\mid i\in J\})$.
Using natural coordinate projections we have a natural morphism $
d=(d_i)_{i\in I} \mapsto (d_J)_{J\in \hat I}\in \ell_\infty (\{D_J\mid J\in \hat I\})$. Clearly
$$\| (d_J)_\cl U\|=\lim\nl_\cl U \|d_J\|= \|d\|_{\ell_\infty(\{D_i\mid i\in I\})}.$$
Let $\psi_1: \ell_\infty(\{D_i\mid i\in I\}) \to (D_J)_\cl U$ be the corresponding
isometric $*$-homomorphism.
We claim that we have a c.p. contraction $\psi_2:(D_J)_\cl U \to 
\ell_\infty(\{D^{**}_i\mid i\in I\})$ such that the composition
$\psi_2\psi_1$ coincides with the inclusion
$\ell_\infty(\{D _i\mid i\in I\})\subset \ell_\infty(\{D^{**}_i\mid i\in I\})$.
Indeed, consider $d=(d_J)_\cl U\in (D_J)_\cl U$.
For any $J\in \hat I$ we may write $d_J=(d_J(i))_{i\in J} \in \ell_\infty(\{D _i\mid i\in J\}) $. Fix $i\in I$. We will define $\psi_2(d)$ by its $i$-th coordinate $\psi_{2i}(d)$ taking values  in $D^{**}_i$.  Note that $i\in J$ for all $J$ far enough in the net. We then set
$$\psi_{2i}(d)=\text{weak*}\lim\nl_\cl U d_J(i).$$
It is then easy to check our claim.\\
By   \eqref{30/8} the composition $\psi_2\psi_1$ is $\max$-isometric,  and  $\|\psi_2\|_{mb}\le 1$
by Remark \ref{rw},
therefore $\psi_1$ is $\max$-isometric.
In particular (taking $C=A$) $\psi_1 \otimes Id_A$ defines  an isometric embedding
$${\ell_\infty(\{D_i\mid i\in I\})} \otimes_{\max} A \subset (D_J)_\cl U \otimes_{\max} A.$$
By (ii) we have
an isometric embedding
 $(D_J)_\cl U \otimes_{\max} A \subset \prod\nl_{J\in \hat I} [ {D_J \otimes_{\max} A}  ] /\cl U.$
 It follows that for any $t=(t_i) \in {\ell_\infty(\{D_i\mid i\in I\})} \otimes_{\max} A$ we may write
 $$\|t\|_{{\ell_\infty(\{D_i\mid i\in I\})} \otimes_{\max} A}
 =\lim\nl_\cl U  \|t_J\|_{D_J \otimes_{\max} A } ,$$
 and since $\|t_J\|_{D_J \otimes_{\max} A }=\sup_{i\in J} \|t_i\|_{D_i \otimes_{\max} A }$ for any \emph{finite} $J$, 
 the inequality in \eqref{pty} follows. This proves (ii) $\Rightarrow$   (i).
 \end{proof}

 \section{The non-separable case}\label{non-sep}
  
  Let us say that a  (not necessarily separable)
  $C^*$-algebra has the LP if 
  for any separable subspace $E$ there is a separable
  $C^*$-subalgebra $A_s$ with the LP that contains $E$.\\
  For example all nuclear $C^*$-algebras
  and also $C^*(\F)$ for any free group $\F$ have the LP.
  
  To tackle the non-separable case
  the following statement 
  will be useful.
  
  \begin{pro}\label{fr18} Let $A$ be a $C^*$-algebra.
Let $E\subset A $ be a separable subspace.
 There is a separable $C^*$-subalgebra $A_s$
 satisfying $E\subset A_s\subset A$ such that,
 for any $C^*$-algebra $D$,
 the $*$-homomorphism
 $D \otimes_{\max} A_s \to D \otimes_{\max} A$
 is isometric. In other words,
 the inclusion $A_s \to A$ is $\max$-isometric.
\end{pro}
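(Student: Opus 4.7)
The plan is to reduce to $D=\C$ via Remark \ref{r23} and then exhaust $A$ by an increasing sequence of separable subalgebras, at each stage enlarging so as to absorb norm-preserving subalgebras for a countable dense set of tensors. Since the natural $*$-homomorphism $\C \otimes_{\max} A_s \to \C \otimes_{\max} A$ is automatically contractive for any subalgebra $A_s\subset A$, one always has $\|t\|_{\C \otimes_{\max} A_s}\ge \|t\|_{\C \otimes_{\max} A}$, and only the reverse inequality for $t\in \C \otimes A_s$ needs to be produced.

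The key technical ingredient is an ``$A$-side'' analogue of Remark \ref{rty}: for every $t\in \C \otimes A$ there is a separable $C^*$-subalgebra $A(t)\subset A$ with $t\in \C \otimes A(t)$ and $\|t\|_{\C \otimes_{\max} A(t)}=\|t\|_{\C \otimes_{\max} A}$. I would derive this from \cite[Lemma 7.23]{P6} applied to $A \otimes_{\max} \C$, using the symmetry of the maximal tensor product: for each $n$ pick a separable $A^{1/n}\subset A$ with $t\in \C\otimes A^{1/n}$ and $\|t\|_{\C \otimes_{\max} A^{1/n}}\le (1+1/n)\|t\|_{\C \otimes_{\max} A}$, then let $A(t)$ be the $C^*$-algebra generated by $\bigcup_n A^{1/n}$, still separable. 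The inclusion $A^{1/n}\hookrightarrow A(t)$ induces a contractive $*$-homomorphism $\C \otimes_{\max} A^{1/n} \to \C \otimes_{\max} A(t)$, which gives $\|t\|_{\C \otimes_{\max} A(t)}\le (1+1/n)\|t\|_{\C \otimes_{\max} A}$ for every $n$, whence equality.

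I then build $A_s$ by a diagonal exhaustion. Set $A_0$ to be the separable $C^*$-subalgebra generated by $E$. Supposing $A_n$ constructed, pick a countable subset $S_n\subset \C \otimes_{\rm alg} A_n$ that is norm-dense for the projective tensor norm (hence for \emph{any} $C^*$-cross-norm on $\C\otimes_{\rm alg} A_n$); for each $t\in S_n$ choose $A(t)\subset A$ as above; and let $A_{n+1}$ be the separable $C^*$-subalgebra generated by $A_n \cup \bigcup_{t\in S_n} A(t)$. Finally set $A_s=\overline{\bigcup_n A_n}$, a separable $C^*$-subalgebra of $A$ containing $E$.

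For the verification, note that for every $t\in S_n$ the inclusions $A(t)\subset A_{n+1}\subset A_s$ yield
$$\|t\|_{\C \otimes_{\max} A_s}\le \|t\|_{\C \otimes_{\max} A(t)}=\|t\|_{\C \otimes_{\max} A},$$
so the two $C^*$-norms $\|\cdot\|_{\C \otimes_{\max} A_s}$ and $\|\cdot\|_{\C \otimes_{\max} A}$ agree on $\bigcup_n S_n$. Each $S_n$ is $\|\cdot\|_{\C \otimes_{\max} A_s}$-dense in $\C \otimes_{\rm alg} A_n$ (by the projective-density choice), and $\bigcup_n \C \otimes_{\rm alg} A_n$ is $\|\cdot\|_{\C \otimes_{\max} A_s}$-dense in $\C \otimes_{\rm alg} A_s$, so a routine $\vp$-argument using the a priori bound $\|\cdot\|_{\C \otimes_{\max} A_s}\ge \|\cdot\|_{\C \otimes_{\max} A}$ propagates the equality to all of $\C \otimes_{\rm alg} A_s$ and hence, by continuity, to $\C \otimes_{\max} A_s$. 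Invoking Remark \ref{r23} completes the proof. The only genuinely delicate step is the symmetric compression lemma of the second paragraph; the remaining exhaustion argument is standard.
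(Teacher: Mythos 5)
Your proof is correct and follows the same route as the paper: reduce to $D=\C$ via Remark \ref{r23} and then handle the separable exhaustion, which the paper simply outsources to \cite[Prop. 7.24]{P6} rather than writing out. Your reconstruction of that cited argument (the symmetric form of Remark \ref{rty} giving $A(t)$, followed by the diagonal exhaustion over projectively dense countable sets) is the standard one and is sound.
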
 
 \begin{proof}  By Remarks \ref{r23} it suffices to prove this
 for   $D=\C$. The latter case is proved in detail in 
 \cite[Prop. 7.24]{P6} (except that the factors are flipped).
\end{proof}   

\begin{rem}\label{11/20} 
Let $A_s\subset A$ be a $C^*$-subalgebra.
Assume that the inclusion $i: A_s \to A$ is $\max$-isometric. We claim that
  for any von Neumann algebra $M$ 
the map $Id_M \otimes i: M \otimes A_s \to M \otimes A$
extends to an isometric morphism
$  M \otimes_{\rm nor} A_s \to M \otimes_{\rm nor} A$.\\
Indeed, our assumption implies that there is a
c.c.p. map (a so called ``weak expectation")
$T: A \to A_s^{**}$ such that $T_{|A_s}: A_s \to A_s^{**}$ coincides with the canonical inclusion.  See \cite[p. 88]{BO} or \cite[Th. 7.29]{P6} for a detailed proof.
By Lemma \ref{19/11} and Remark \ref{ety'} the claim follows.
\end{rem}

\begin{rem}\label{11/18} 
If $A_s \to A$ is $\max$-isometric, for any $E\subset A_s$ the spaces $MB^{E\subset A_s}(E,C)$
and $MB^{E\subset A}(E,C)$  associated to the respective embeddings
are (isometrically) identical for any $C$.
By Theorem \ref{t22}, this shows that $A$ has the LP
if and only if $\| MB (E,C^{**}) \to MB (E,C)^{**}\|=1$ for any f.d. subspace $E\subset A$.
\\Moreover, if $A$ satisfies \eqref{pty}
then $A_s$ also does. 
\end{rem}   
More precisely:
 \begin{cor}\label{20/11} A $C^*$-algebra $A$ satisfies the property in \eqref{pty}
 if and only if for any separable subspace $E\subset A$
 there is a separable $C^*$-subalgebra $A_s$ satisfying \eqref{pty}
 such that $E\subset A_s\subset A$. 
 \end{cor}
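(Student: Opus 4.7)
The plan is to reduce both directions to a direct combination of the ``max-isometric subalgebra'' theorem (Proposition \ref{fr18}) with the hypothesis on separable \eqref{pty}-subalgebras, using only monotonicity of the max-tensor norm under $C^*$-subalgebra inclusions.

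For the forward direction, suppose $A$ satisfies \eqref{pty} and let $E \subset A$ be separable. Invoke Proposition \ref{fr18} to produce a separable $A_s$ with $E \subset A_s \subset A$ and inclusion $\max$-isometric. Applying this $\max$-isometry with tensor factor $\ell_\infty(\{D_i\})$ and with tensor factor $D_i$ gives, for any $t \in \ell_\infty(\{D_i\}) \otimes A_s$,
\[
\|t\|_{\ell_\infty(\{D_i\}) \otimes_{\max} A_s} = \|t\|_{\ell_\infty(\{D_i\}) \otimes_{\max} A}, \qquad \|t_i\|_{D_i \otimes_{\max} A_s} = \|t_i\|_{D_i \otimes_{\max} A},
\]
from which the inequality \eqref{pty'} for $A_s$ is immediate from \eqref{pty'} for $A$. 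This is essentially the second assertion of Remark \ref{11/18}.

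For the converse, assume every separable subspace of $A$ lies in a separable $C^*$-subalgebra of $A$ satisfying \eqref{pty}. By Proposition \ref{rty1} it suffices to verify \eqref{pty'} when $D_i = \C$ for every $i \in I$. Fix such a $t \in \ell_\infty(\{\C\}) \otimes A$; then $t \in \ell_\infty(\{\C\}) \otimes E$ for some finite dimensional $E \subset A$. The key step---and the only real point---is a ``sandwich'' construction. First use Proposition \ref{fr18} to find a separable $A_s^0 \subset A$ with $E \subset A_s^0$ and $A_s^0 \hookrightarrow A$ $\max$-isometric; then apply the hypothesis to the separable subspace $A_s^0$ to produce a separable $A_s$ with $A_s^0 \subset A_s \subset A$ satisfying \eqref{pty}. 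Because the max-norm is non-increasing along inclusions and the outer inclusion $A_s^0 \hookrightarrow A$ is an isometry for this norm, the middle inclusion is automatically isometric too; thus the three max-norms of $t$ (computed over $A_s^0$, $A_s$, $A$) coincide, and similarly for each $t_i$. Applying \eqref{pty'} for $A_s$ in the middle step then yields
\[
\|t\|_{\ell_\infty(\{\C\}) \otimes_{\max} A} \,=\, \|t\|_{\ell_\infty(\{\C\}) \otimes_{\max} A_s} \,\le\, \sup_i \|t_i\|_{\C \otimes_{\max} A_s} \,=\, \sup_i \|t_i\|_{\C \otimes_{\max} A},
\]
which is \eqref{pty'} for $A$.

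The obstacle one might have anticipated---producing a single separable $A_s$ that is simultaneously $\max$-isometrically embedded in $A$ and satisfies \eqref{pty}, presumably via an alternating iteration and a direct limit---is sidestepped by the sandwich: the smaller $A_s^0$ supplies the norm comparison with $A$ while the larger $A_s$ supplies \eqref{pty}, and no inductive limit is required because monotonicity of $\|\cdot\|_{\max}$ along $A_s^0 \subset A_s \subset A$ automatically forces the intermediate norm to equal the two extreme ones.
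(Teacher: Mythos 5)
Your proof is correct, and it rests on the same two ingredients as the paper's: Proposition \ref{fr18} and the monotonicity of $\|\cdot\|_{\max}$ along $C^*$-subalgebra inclusions. The forward direction coincides with the paper's (and with Remark \ref{11/18}). Where you genuinely add something is in the converse: the paper's one-sentence proof takes ``$E\subset A_s\subset A$ as in the proposition,'' which silently conflates the $A_s$ produced by Proposition \ref{fr18} (max-isometrically embedded, but with no a priori reason to satisfy \eqref{pty}) with the $A_s$ furnished by the hypothesis (satisfying \eqref{pty}, but with no a priori max-isometric embedding); as you note, the naive estimate then runs the wrong way, since one only gets $\sup_i\|t_i\|_{D_i\otimes_{\max} A}\le \sup_i\|t_i\|_{D_i\otimes_{\max} A_s}$. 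Your sandwich $A_s^0\subset A_s\subset A$, with the outer composite isometric forcing the intermediate norms to agree, is exactly the missing interpolation, and it avoids the alternating-union construction one might otherwise reach for. (The preliminary reduction to $D_i=\C$ via Proposition \ref{rty1} is harmless but unnecessary: the sandwich works verbatim for arbitrary families $(D_i)$.)
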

 \begin{proof} 
 Let
   $t\in \ell_\infty(\{D_i\}) \otimes E  $ with $E\subset A_s\subset A$
   as in the proposition.
  Then $t$ satisfies \eqref{pty'} with respect to $A$
   if and only if it does with respect to $A_s$.
    \end{proof}   
    
    It is now easy to extend Theorems \ref{L1} and \ref{LP2}
    to the non separable case:
    \begin{thm} A  (not necessarily separable)
  $C^*$-algebra has the LP if and only if it satisfies the 
  property \eqref{pty}.
  Moreover Theorem \ref{LP2} remains valid in the non-separable case.
   \end{thm}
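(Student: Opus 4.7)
The plan is to reduce everything to the separable case (Theorems \ref{L1} and \ref{LP2}) by means of Proposition \ref{fr18} (existence of max-isometric separable $C^*$-subalgebras) and Corollary \ref{20/11}.

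For the equivalence LP $\Leftrightarrow$ \eqref{pty} in the non-separable setting, first suppose $A$ has the LP. Then by the non-separable definition every separable $E\subset A$ sits in some separable $A_s$ having the LP in the separable sense; by Theorem \ref{L1} such an $A_s$ satisfies \eqref{pty}, and Corollary \ref{20/11} forces $A$ to satisfy \eqref{pty}. Conversely, assume $A$ satisfies \eqref{pty} and let $E\subset A$ be separable. Proposition \ref{fr18} produces a separable $A_s\supset E$ such that $A_s\hookrightarrow A$ is $\max$-isometric. Since any $t\in \ell_\infty(\{D_i\})\otimes A_s$ lies in $\ell_\infty(\{D_i\})\otimes F$ for some finite-dimensional $F\subset A_s$, $\max$-isometry gives $\|t\|_{\ell_\infty(\{D_i\})\otimes_{\max}A_s}=\|t\|_{\ell_\infty(\{D_i\})\otimes_{\max}A}$ and $\|t_i\|_{D_i\otimes_{\max}A_s}=\|t_i\|_{D_i\otimes_{\max}A}$; hence \eqref{pty} for $A$ descends to $A_s$, and Theorem \ref{L1} yields the separable LP of $A_s$, proving that $A$ has the LP.

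For the extension of Theorem \ref{LP2}, the equivalence (i) $\Leftrightarrow$ (i)$'$ has just been established, (ii) $\Rightarrow$ (ii)$'$ is trivial, and the argument (ii)$'$ $\Rightarrow$ (i)$'$ given in the proof of Theorem \ref{LP2} via Lemma \ref{nty} together with \eqref{30/8/1} and \eqref{30/8} never invoked separability, so it carries over unchanged. Only (i)$'$ $\Rightarrow$ (ii) needs the promised direct argument. Let $M$ be a von Neumann algebra and $t\in M\otimes A$; the inequality $\|t\|_{\mathrm{nor}}\le\|t\|_{\max}$ is automatic. For the reverse, choose a finite-dimensional $F\subset A$ containing the support of $t$ and, via Proposition \ref{fr18}, a separable $A_s\supset F$ that embeds $\max$-isometrically in $A$. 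By the argument of the previous paragraph, $A_s$ inherits \eqref{pty} from $A$, so Theorem \ref{L1} gives it the separable LP, and the separable Theorem \ref{LP2} yields $M\otimes_{\max}A_s=M\otimes_{\mathrm{nor}}A_s$. The $\max$-isometric inclusion transfers the $\max$-norm: $\|t\|_{M\otimes_{\max}A}=\|t\|_{M\otimes_{\max}A_s}$. For the nor-norm we invoke Remark \ref{11/20}, which says precisely that a $\max$-isometric inclusion of $C^*$-algebras becomes nor-isometric after tensoring with any von Neumann algebra; therefore $\|t\|_{M\otimes_{\mathrm{nor}}A_s}=\|t\|_{M\otimes_{\mathrm{nor}}A}$. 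Chaining the three equalities gives $\|t\|_{M\otimes_{\max}A}=\|t\|_{M\otimes_{\mathrm{nor}}A}$.

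The main technical ingredients are already in place: Proposition \ref{fr18} supplies a $\max$-isometric separable subalgebra around any prescribed separable set, and Remark \ref{11/20} upgrades $\max$-isometry to nor-isometry after tensoring with a von Neumann algebra. The only real pitfall is organisational, namely keeping straight which ambient algebra each norm is computed in, and confirming that the proof of (ii)$'$ $\Rightarrow$ (i)$'$ in Theorem \ref{LP2} truly never used separability (as the footnote there explicitly asserts). Once this bookkeeping is carried out, the non-separable theorem reduces entirely to the separable one applied to a well-chosen $A_s$.
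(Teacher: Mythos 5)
Your proposal is correct and follows essentially the same route as the paper: the first part is exactly Corollary \ref{20/11} combined with Theorem \ref{L1}, and the direct proof of (i)$'$ $\Rightarrow$ (ii) via Proposition \ref{fr18}, the separable case of Theorem \ref{LP2}, and Remark \ref{11/20} is precisely the argument given there. The extra bookkeeping you supply (how \eqref{pty} descends to $A_s$, and that (ii)$'$ $\Rightarrow$ (i)$'$ never used separability) is accurate and consistent with the paper.
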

    \begin{proof} The first part follows from the separable case by Corollary \ref{20/11}.
    For the second part, assume that $A$ satisfies \eqref{pty}.
    Let $t\in M \otimes E$ with $E$ f.d.
    Let $A_s$ be as in Proposition \ref{fr18}.
    Then $\|t\|_{  M \otimes_{\max}  A } =\|t\|_{  M \otimes_{\max} A_s }$.
    By Remark \ref{11/20}
    we also have $\|t\|_{  M \otimes_{\rm nor}  A } =\|t\|_{  M \otimes_{\rm nor}  A_s }$.
     By the separable case of Theorem \ref{LP2}
    we know that $\|t\|_{  M \otimes_{\rm nor}  A_s } = \|t\|_{  M \otimes_{\max}  A_s }$.
    Therefore $\|t\|_{  M \otimes_{\rm nor}  A } =\|t\|_{  M \otimes_{\max}  A }$.
    This shows that (i)' $\Rightarrow$ (ii) in
    Theorem \ref{LP2} remains valid in the non-separable case.
The other implications have already been proved there.
        \end{proof}
\begin{rem} 

We will not enumerate all the other non-separable variants 
 of the equivalent forms of the LP, except for the following
 one, that is in some sense the weakest lifting 
 requirement sufficient for the LP:
a $C^*$-algebra $A$ has the LP if and only if for any
  separable operator system $E\subset A$ and any
  $*$-homomorphism $u: A \to C/\cl I$, the restriction
  $u_{|E}$ admits a c.c.p. lifting.
  Indeed, if the latter holds
  and if $A$ is assumed unital for simplicity,
  writing $A=C^*(\F)/\cl I$ for some large enough free group $\F$,
  it is easy to deduce the desired LP for $A$ from that of 
  $C^*(\F)$. This proves the if part. The converse is clear, say,
  by Proposition \ref{14/11} applied to some separable $A_s\subset A$.
  \end{rem}

 \section{On the OLP}
            
            Let $X$ be an operator space. The associated universal unital $C^*$-algebra
            $C_u^*<X>$ is characterized by the following property:
            it contains $X$ completely isometrically,
          is generated by $X$ and the unit and any complete contraction $u: X \to C$ into a unital
       $C^*$-algebra (uniquely) extends to a unital $*$-homomorphism
       from $ C_u^*<X>$  to $C$ (see e.g. \cite[Th. 2.25]{P6}).

           Following \cite{Ozllp} we say that $X$ has the OLP if $C_u^*<X>$ has the LP.
       By the universal property of $ C_u^*<X>$, it is easy to check that
              $X$ has the OLP if and only if 
       any $u\in CB(X,C/\cl I)$ into an arbitrary quotient $C^*$-algebra
admits a lifting $\hat u\in CB(X,C )$ such that  $\|\hat u\|_{cb}=  \|  u\|_{cb}$.
See \cite[Th. 2.12]{Ozllp} for various examples of $X$ with the OLP.

            Note that,
            with respect to the inclusion $X\subset A=C_u^*<X>$,
             the norm induced  on $D \otimes X$
            by the max-norm on
        $D \otimes A$
            can be identified with the so-called $\delta$-norm (see \cite[p. 240]{P4}). We merely recall that the associated completion $D \otimes_\d X$
            can be identified as an operator space with the quotient
            $[D \otimes_h X  \otimes_h D] / \ker(Q)$ where
            $Q : D \otimes_h X  \otimes_h D \to D \otimes_\d X$
            is associated to the product map  $D \otimes  D \to D$.
            We have then isometrically when we view $X$ as sitting in $A=C_u^*<X>$:
             \begin{equation} \label{11/9/2}
            D \otimes_\d X= D \otimes_{\max} X.
            \end{equation}
            This implies that we have also isometrically for any $C^*$-algebra 
            $C$.
            \begin{equation} \label{11/9/3}
            MB(X, C)=   CB(X, C).
            \end{equation}
            Consider $E\subset X$ f.d. Let $C$ be a $C^*$-algebra.
            Recall (see Theorem \ref{ext}) that the unit ball of  the space $MB(E, C)$              
            is formed of the maps $u: E \to C$ 
            that extend to some $\dot u: A \to C^{**}$ with
            $\|\dot u\|_{dec} \le 1$.
            Let $\tilde u=\dot u_{|X}: X \to C^{**}$. Then $\|\tilde u\|_{cb}\le 1$, so 
            $u$ extends to a complete contraction $\tilde u: X \to C^{**}$.
         Conversely, if $u$ extends to a complete contraction $\tilde u: X \to C^{**}$,
         then   $\tilde u$  extends to a $*$-homomorphism $\pi: A \to C^{**}$,
         and this implies $\|u\|_{MB(E, C)} \le 1$.
            \\
            Therefore in this case
            \begin{equation} \label{11/9/1}
            \|u\|_{MB(E, C)} = \|u\|_{ext(E,C)}\end{equation} where (recall $i_C: C \to C^{**}$ denotes the canonical inclusion)
       $$\|u\|_{ext(E,C)}= \inf \{\|\tilde u\|_{cb} \mid \tilde u: X \to C^{**}, {\tilde u}_{|E}=i_C u\}.$$

Whence the following characterization 
of the OLP. The equivalence (i)  $\Leftrightarrow$ (iii) already appears in   \cite[Prop. 2.9]{Ozllp}.
\begin{thm} \label{L10}
Let $X$ be a separable operator space. The following are equivalent:
\item{\rm (i)} $X$ has the OLP.
\item{\rm (ii)}  For any  $C^*$-algebra $C$ and any f.d. subspace $E\subset X$
we have an isometric identity
$${ext(E,C^{**})} ={ext(E,C)}^{**}.$$
\item{\rm (iii)} For any  $C^*$-algebra $C$, every complete contraction $u: X \to C^{**}$ is the pointwise-weak*
limit of a net of complete contractions $u_i: X \to C$.
\item{\rm (iv)}
For any family $(D_i)_{i\in I}$ 
 of $C^*$-algebras, the natural mapping
 $${\ell_\infty(\{D_i\}) \otimes_{\d} X} \to \ell_\infty(\{D_i \otimes_{\d} X\})$$
 is isometric.
\end{thm}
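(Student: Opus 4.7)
Plan: Set $A=C_u^*\langle X\rangle$, so by definition the OLP of $X$ coincides with the LP of $A$. The identities \eqref{11/9/1}, \eqref{11/9/2} and \eqref{11/9/3}---identifying $MB(E,C)$ with $ext(E,C)$, the $\delta$-norm on $X$ with the max-norm induced from $A$, and $MB(X,C)$ with $CB(X,C)$---will serve as a dictionary between the $cb$-picture on $X$ and the $\max$-picture on $A$. The equivalence (i)~$\Leftrightarrow$~(iii) follows from Theorem~\ref{9/10} applied to $X\subset A$ with the admissible class $\cl F$ of complete contractions: the $\cl F$-LP is (a rescaling of) the characterization of OLP noted in the text, while Theorem~\ref{9/10}(iii) specialized to this class is precisely (iii). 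The implication (i)~$\Rightarrow$~(iv) combines Theorem~\ref{L1} (LP of $A$ $\Leftrightarrow$ \eqref{pty} for $A$) with \eqref{11/9/2}: restrict the isometric embedding $\ell_\infty(\{D_i\})\otimes_{\max}A\subset \ell_\infty(\{D_i\otimes_{\max}A\})$ to $X\subset A$.

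For the key direction (iv)~$\Rightarrow$~(i), first note that (iv) forces \eqref{pty} on every f.d.\ $E\subset X$ (restrict the $\max$-norms from $X$ to $E$). Consequently the proof of Theorem~\ref{t3} applies verbatim on each such $E$---its weak*-compactness argument uses \eqref{pty'} only for the particular $E$ under analysis---producing the contractive inclusion $MB(E,C^{**})\to MB(E,C)^{**}$; combined with the universal opposite inclusion \eqref{gtyb}, this already yields (ii). Now given a complete contraction $u:X\to C/\cl I$ and any f.d.\ $E\subset X$, Lemma~\ref{L21} provides a lifting $\hat u_E:E\to C$ with $\|\hat u_E\|_{sb}=\|u|_E\|_{sb}$. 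Under (iv) the $sb$- and $mb$-norms agree on $E$, and \eqref{11/9/1} gives $\|u|_E\|_{mb}=\|u|_E\|_{ext}\le\|u\|_{cb}\le 1$, so $\|\hat u_E\|_{sb}\le 1$. Theorem~\ref{t12} extends $\hat u_E$ to $\tilde u_E:X\to C$ with $\|\tilde u_E\|_{sb}\le 1+\vp$; one more application of (iv) together with \eqref{11/9/3} identifies $SB(X,C)=MB(X,C)=CB(X,C)$, so $\|\tilde u_E\|_{cb}\le 1+\vp$. Since $\tilde u_E$ lifts $u$ on $E$, Arveson's principle applied to the admissible, pointwise-closed class of complete contractions, with the net indexed by the directed system of f.d.\ $E\subset X$ and $\vp\to 0$, assembles the $\tilde u_E$ into an honest complete-contraction lift of $u$.

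The remaining equivalence (ii)~$\Leftrightarrow$~(iii) rests on rewriting (ii), via \eqref{11/9/1}, as $MB(E,C^{**})=MB(E,C)^{**}$, whose $\supset$ half is the unconditional \eqref{gtyb}. For (iii)~$\Rightarrow$~(ii), take $v\in MB(E,C^{**})$ of norm $\le 1$; by \eqref{11/9/1} together with the canonical c.c.p. projection $(C^{**})^{**}\to C^{**}$, $v$ extends to a complete contraction $\tilde v:X\to C^{**}$; apply (iii) to obtain a weak*-approximating net of complete contractions $\tilde v_\a:X\to C$ and restrict to $E$ to witness $v$ in the unit ball of $MB(E,C)^{**}$. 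For (ii)~$\Rightarrow$~(iii), (ii) supplies precisely the conclusion of Theorem~\ref{t3} on all f.d.\ $E\subset X$, so the proof of Theorem~\ref{ex} goes through (its inductive extension never leaves $X$); given a complete contraction $u:X\to C^{**}$, approximate each $u|_E$ weak* by elements of the unit ball of $MB(E,C)$ using (ii), extend each approximant to $X$ via Theorem~\ref{ex}, and read these extensions as near-complete-contractions via \eqref{11/9/3}, yielding the required pointwise weak* approximation. The principal obstacle in both (iv)~$\Rightarrow$~(i) and (ii)~$\Rightarrow$~(iii) is to confirm that Theorems~\ref{ex} and~\ref{t12} remain valid when the conclusion of Theorem~\ref{t3} is assumed only on f.d.\ subspaces of $X$ (not of $A$); this is fine because their inductive constructions stay inside $X$, and (iv) (or (ii)) is exactly the hypothesis those proofs need on that class of subspaces.
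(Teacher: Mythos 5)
Your proposal is correct and follows essentially the same route as the paper: the same dictionary \eqref{11/9/1}--\eqref{11/9/3} between the $cb$/$ext$ picture on $X$ and the $\max$/$MB$ picture on $A=C_u^*\langle X\rangle$, Theorem \ref{9/10} for (i) $\Leftrightarrow$ (iii), and Theorem \ref{L1} plus \eqref{11/9/2} for (i) $\Rightarrow$ (iv). For (iv) $\Rightarrow$ (i) the paper simply observes that (iv) gives $SB(X,C)=MB(X,C)=CB(X,C)$ and cites Theorem \ref{lift}, whereas you unpack that theorem's proof (Lemma \ref{L21}, Theorem \ref{t12}, Arveson) and supply the "easy" (ii) $\Leftrightarrow$ (iii) argument explicitly; both are the same underlying argument.
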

\begin{proof}   
(i) $\Rightarrow$   (ii) follows from Theorem \ref{t3} and \eqref{11/9/1}. 
(ii) $\Leftrightarrow$   (iii) is easy. (i) $\Leftrightarrow$   (iii) follows
from Theorem \ref{9/10} with $\cl F$ the class of complete contractions.
(i) $\Rightarrow$   (iv) is clear by \eqref{11/9/2} since
(by Theorem \ref{L1})  the LP for  $C^*_u<X>$ implies
that it satisfies the property in
\eqref{pty}. 
\\
Assume (iv). Then by \eqref{11/9/2} and \eqref{11/9/3} we have $SB(X,C)=MB(X,C)=CB(X,C)$ isometrically for any $C$.
By Theorem \ref{lift},  (i) holds.
\end{proof}

In particular we may apply this when $X$ is a maximal operator space.
Then $\|\tilde u\|_{cb}=\|\tilde u\|$ for any $\tilde u: X \to C$ (or any $\tilde u: X \to C^{**}$).
This case is closely related to several results of Ozawa in his PhD thesis
and in \cite{Ozllp}. See also Oikhberg's \cite{[O3]}. The property (ii) in Theorem \ref{L10}
is reminiscent of Johnson and Oikhberg's extendable local reflexivity from \cite{JO}.

It gives some information on the existence of bounded liftings  
in the Banach space setting.

 \section{Illustration: Property (T) groups}\label{ill}

            To illustrate the focus our paper gives to the property in \eqref{pty} and its variants, we turn to Kazhdan's property (T), following \cite{Ozllp,Thom,[ISW]}. The proof of Theorem \ref{t11} below is merely a  reformulation of an argument
             from \cite[Th. G]{[ISW]}, but appealing to \eqref{pty} is perhaps 
             a bit quicker.

\begin{thm}[\cite{[ISW]}] \label{t11}
Let $G$ be a discrete group with property (T).
Let $(N_n)$ be an increasing sequence of normal subgroups of $G$
and let ${N_\infty}=\cup N_n$. If $C^*(G/{N_\infty})$ has the LP then
 ${N_\infty}=N_n$ for all  large enough $n$.
\end{thm}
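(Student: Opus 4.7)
The plan is to argue by contradiction: assume $G$ has property~(T), $N_n\subsetneq N_\infty$ for every $n$, and $A:=C^*(G/N_\infty)$ has the LP; set $D_n:=C^*(G/N_n)$.  By Theorem~\ref{L1}, $A$ satisfies property~\eqref{pty} for the family $\{D_n\}$, and the aim is to derive a contradiction.  First I construct the canonical $*$-homomorphism $\pi:A\to \prod_n D_n/\bigoplus_n D_n$ sending the generator $u_g$ to the class of the sequence $(\bar g_n)_n$, which is well-defined because for each $g\in N_\infty$ one has $g\in N_k$ for some $k$, so $\bar g_n=1$ for all $n\ge k$ and $(\bar g_n-1)_n\in \bigoplus_n D_n$.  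By the LP (combined with Proposition~\ref{14/11} after unitization if needed), $\pi$ admits a u.c.p.\ lifting $\hat\pi:A\to \prod_n D_n$, whose coordinates $\hat\pi_n:A\to D_n$ are u.c.p.\ maps with $\|\hat\pi_n(u_g)-\bar g_n\|\to 0$ for each fixed $g\in G$ and $\|\hat\pi_n(ab)-\hat\pi_n(a)\hat\pi_n(b)\|\to 0$ for each fixed $a,b\in A$ (asymptotic multiplicativity from the fact that $\hat\pi$ lifts a $*$-homomorphism modulo $c_0(\{D_n\})$).

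The key input from property~(T) is the Kazhdan projection $p\in C^*(G)$, whose central images $p_n\in D_n$ and $p_\infty\in A$ satisfy $u_g p_n=p_n$ and $u_g p_\infty=p_\infty$ for every $g\in G$.  The Kadison--Schwarz defect bound $\|\hat\pi_n(u_g)\hat\pi_n(p_\infty)-\hat\pi_n(u_g p_\infty)\|^2 \le \|1-\hat\pi_n(u_g)\hat\pi_n(u_g)^*\|$ tends to zero because $\hat\pi_n(u_g)\to \bar g_n$ is asymptotically unitary, so $\hat\pi_n(p_\infty)$ is asymptotically $\hat\pi_n(u_g)$-invariant for each $g$.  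Combined with $\hat\pi_n(p_\infty)\to p_n$ in norm (from the lifting) and the uniform spectral gap in every quotient of $G$ provided by property~(T) via $\sum_{h\in F}(1-\bar h)^*(1-\bar h)\ge\epsilon^2(1-p)$, the plan is to round the asymptotic $*$-homomorphism $(\hat\pi_n)$ into a sequence of honest $*$-homomorphisms $\rho_n:A\to D_n$ satisfying $\|\rho_n(a)-\hat\pi_n(a)\|\to 0$ for each $a\in A$.  Each such $\rho_n$, being a unitary representation of $G/N_\infty$ inside $D_n$, is forced to send $u_g$ to the identity for every $g\in N_\infty$, so that $\rho_n(u_g)=1$, while simultaneously $\rho_n(u_g)\approx \bar g_n$.

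The hard part will be the quantitative passage from the asymptotic $*$-homomorphism $(\hat\pi_n)$ to the honest $*$-homomorphisms $(\rho_n)$ with uniform error control; this is precisely where property~(T) is essentially used, via the isolation of the trivial representation and the uniform Kazhdan constant inherited by every quotient of $G$.  Once this rigidity step is carried out (following the Stinespring-based treatment in \cite[Thm.~G]{[ISW]}), the contradiction is extracted as follows: under the hypothesis $N_n\subsetneq N_\infty$ one selects, for each $n$, a witness built from the Kazhdan-set elements and the nontrivial images in $N_\infty\setminus N_n$, and combines the Kazhdan inequality transferred along $\rho_n\approx\hat\pi_n$ with the identity $\rho_n(u_g)=1$ to produce a quantitative obstruction incompatible with $\rho_n(u_g)\approx \bar g_n$.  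The advantage of the present reformulation via \eqref{pty} is that this final contradiction is detected directly as a violation of the tensor-norm inequality in $\ell_\infty(\{D_n\})\otimes_{\max} A$, bypassing most of the Stinespring manipulations in \cite{[ISW]} and making the overall argument shorter.
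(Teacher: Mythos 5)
Your approach is genuinely different from the paper's, but as written it has two serious gaps, one of which is structural rather than merely an omitted computation.

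First, the ``rounding'' of the asymptotically multiplicative u.c.p.\ maps $\hat\pi_n:A\to D_n$ into honest $*$-homomorphisms $\rho_n$ with $\|\rho_n(a)-\hat\pi_n(a)\|\to 0$ is exactly the hard content, and you do not supply it. Property (T) of $G$ does not give operator-norm stability of approximate representations with values in arbitrary $C^*$-algebras (the known stability phenomena for property (T) groups concern Hilbert--Schmidt-type norms or very special targets; in the operator norm, property (T) tends to be an \emph{obstruction} to such stability). Deferring to \cite{[ISW]} does not help, because the argument there (and its reformulation in the paper) does not proceed by perturbing an asymptotic morphism into a genuine one. Second, and more fundamentally, even granting the rounding, the contradiction you describe does not materialize. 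For $g\in N_\infty$ one has $u_{q_\infty(g)}=1$ in $A=C^*(G/N_\infty)$, so $\hat\pi_n(u_{q_\infty(g)})=1$ and $\rho_n(u_{q_\infty(g)})=1$ \emph{exactly}; there is no tension with $\bar g_n$. The approximation $\hat\pi_n(u_{q_\infty(g)})\approx \bar g_n$ is a statement about fixed $g\notin N_\infty$ as $n\to\infty$, whereas the contradiction must be extracted from elements $g_n\in N_\infty\setminus N_n$ that vary with $n$, for which your pointwise-in-$a$ estimates give no control. Moreover your targets $D_n=C^*(G/N_n)$ are full group $C^*$-algebras, whose universal representation contains the trivial representation as a direct summand (via the Kazhdan projection), so any invariant-vector argument in this setting is vacuous and cannot force $N_\infty/N_n$ to be finite.

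For comparison, the paper's proof runs through a different mechanism: it forms the group von Neumann algebras $\cl M_n=\lambda_{G_n}(G_n)''$ and their ultraproduct, uses \eqref{pty} (in its ultraproduct form) to push the norm identity $|S|=\|\sum_S \pi_{\cl U}(s)\otimes\ovl{\pi_{\cl U}(s)}\|_{\max}$ down to the individual factors $\cl M_n\otimes_{\max}\ovl A$, and then uses the consequence $\cl M_n\otimes_{\max}\ovl A=\cl M_n\otimes_{\rm nor}\ovl A$ of the LP to realize these norms by representations that are \emph{normal} on $\cl M_n$. Property (T) of $G$ upgrades the resulting almost invariant vectors to an invariant vector, which yields a normal state on $\cl M_n$, hence a vector $\eta\in\ell_2(G_n)$ fixed by all of $N_\infty/N_n$ simultaneously; since an infinite subgroup cannot fix a nonzero $\ell_2$-vector, $N_\infty/N_n$ is finite and the conclusion follows. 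The normality (i.e.\ the passage to the regular representation) is precisely the ingredient that produces finiteness, and it has no counterpart in your sketch.
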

\begin{proof}
Let $S\subset G$ be a finite unital generating subset.
Let $G_n=G/N_n$ and $G_\infty=G/{N_\infty}$.
Let $q_n: G \to G_n$  and $q_\infty: G \to G_\infty$ denote the quotient maps.
Let ${\cl M}_n=\lambda_{G_n}(G_n)''$ denote the von Neumann algebra of $G_n$, and ${\cl M}_\cl U$
its ultraproduct with respect to a non-trivial ultrafilter $\cl U$.
We will define unitary representations $\pi_n$ and $\pi_\cl U$ on $G$. We set
$\pi_n(t)= \lambda_{G_n}(q_n(t))$
for any $t\in G$. 
Let $Q_\cl U: \ell_\infty(\{ {\cl M}_n\}) \to M_\cl U$ be the quotient map.
We set
$\pi_\cl U(t)= Q_\cl U ( (\pi_n(t))   ) $.
Since the kernel of $G\ni t\mapsto \pi_\cl U(t)$ contains $N_\infty$, it defines a unitary representation on $G/N_\infty$ and hence we have
$$|S|=\|\sum\nl_S \pi_\cl U(s) \otimes \ovl{\pi_\cl U(s)}\|_{{\cl M}_\cl U \otimes_{\max}\ovl{{\cl M}_\cl U}} \le 
\|\sum\nl_S \pi_\cl U(s) \otimes \ovl{U_{G/{N_\infty}}({q_\infty(s)}) }\|_{{\cl M}_\cl U \otimes_{\max}\ovl{C^*(G/{N_\infty})}}
.$$
Let $A = C^*(G/{N_\infty})$. Using \eqref{pty} for $A $ we find
 \begin{equation} \label{11/9}
|S| \le  \lim_{n,\cl U}\|   \sum\nl_S \pi_n(s) \otimes  \ovl{U_{G/{N_\infty}}({q_\infty(s)})} \|_{  {\cl M}_n \otimes_{\max} \ovl A}.\end{equation}
Also note that the LP of $A$ implies
$${  {\cl M}_n \otimes_{\max} \ovl A}={  {\cl M}_n \otimes_{\rm nor} \ovl A}.$$
For some Hilbert space $H_n$ there is a  faithful representation
  $\sigma_n : {  {\cl M}_n \otimes_{\rm nor} \ovl A} \to B(H_n)$ 
  that is normal in the first variable.
By a well known argument \eqref{11/9} implies that the sequence 
$\{\pi_n \otimes \ovl{U_{G/{N_\infty}}\circ q_\infty} \}$, viewed as unitary representations of $G$
 in $  B(H_n)$,  admits (asymptotically) almost invariant vectors.
By property (T) for all $n$ large enough,
$\pi_n \otimes \ovl{U_{G/{N_\infty}} \circ q_\infty}$ must admit an invariant unit vector in $H_n$. 
This means   there is a unit vector
$\xi \in H_n$ such that
 $$\forall t\in G\quad \sigma_n( \pi_n(t) \otimes \ovl{U_{G/{N_\infty}} \circ q_\infty}(t) )(\xi)=\xi.$$
Recall $\pi_n=\lambda_{G_n}\circ q_n$. Let
$$\forall g\in G_n\quad f(g)= \langle \xi, \sigma_n( \lambda_{G_n}(g) \otimes 1)  (\xi)\rangle.$$
Then $f$ is a normal state on ${\cl M}_n$ such that $f(g)=1$ whenever $g\in N_\infty/N_n \subset G_n$.
 As is well known we may rewrite $f$
as $f(g)=\langle\eta , \lambda_{G_n }(g) \eta\rangle$ for some unit vector
$\eta\in \ell_2(G_n)$. Now $f(g)=1$ whenever $g\in N_\infty/N_n \subset G_n$
means that $\lambda_{G_n }(g) \eta=\eta$ whenever $g\in N_\infty/N_n \subset G_n$.
From this follows that $N_\infty/N_n$ must be finite
and hence $N_\infty$ is the union of finitely many translates of
$N_n$ by points say $t_1,\cdots,t_k$ in $N_\infty$.
Choosing $m\ge n$ so that $t_1,\cdots,t_k\in N_m$
we conclude that $N_m=N_\infty$.
\end{proof}
\begin{cor}[\cite{[ISW]}] Let $\Gamma$ be a discrete group with (T).
If $C^*(\Gamma)$ has the LP then $\Gamma$ is finitely presented.
\end{cor}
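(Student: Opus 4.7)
The plan is to reduce matters to a situation where we can apply Theorem \ref{t11} directly, which requires a group with property (T) presenting $\Gamma$ as a quotient by an exhausting sequence of normal subgroups. Since $\Gamma$ has property (T), it is finitely generated, but we cannot simply take $G=\Gamma$ (any ascending chain $N_n$ with union $\{e\}$ would be eventually trivial and the statement tautological), nor can we use a free cover $F_k\twoheadrightarrow\Gamma$ (free groups lack (T)). This is where we invoke Shalom's theorem that every finitely generated (T) group is a quotient of a finitely presented (T) group. Fix such a surjection $\pi:G\to\Gamma$ with $G$ finitely presented and having property (T), and set $N=\ker\pi$, so $\Gamma\simeq G/N$.

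Since $G$ is finitely generated it is countable, so $N$ is countable; enumerate $N=\{r_1,r_2,\ldots\}$ and let $N_n$ be the normal closure in $G$ of $\{r_1,\ldots,r_n\}$. Then $(N_n)_{n\ge 1}$ is an increasing sequence of normal subgroups of $G$ with $\bigcup_n N_n=N$. Writing $N_\infty=N$, we have $G/N_\infty\simeq\Gamma$, and by hypothesis $C^*(G/N_\infty)=C^*(\Gamma)$ has the LP. Since $G$ has property (T), Theorem \ref{t11} applies and yields $N=N_n$ for all sufficiently large $n$. Thus $N$ is the normal closure in $G$ of a finite subset, and combining this with a finite presentation of $G$ gives a finite presentation of $\Gamma=G/N$.

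The main obstacle is the initial reduction to a finitely presented cover that also has property (T): Theorem \ref{t11} forces the hypothesis (T) on $G$, but to pass from ``$\ker\pi$ is finitely normally generated'' to ``$\Gamma$ is finitely presented'' we additionally need $G$ itself to be finitely presented. Shalom's theorem is precisely what simultaneously provides both properties and so bridges the gap. Once that ingredient is in hand, the rest of the argument is a mechanical unwinding of Theorem \ref{t11}.
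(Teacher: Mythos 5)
Your proof is correct and follows essentially the same route as the paper: invoke Shalom's theorem to obtain a finitely presented property (T) cover $G\twoheadrightarrow\Gamma$, exhaust the kernel by the normal closures $N_n$ of finitely many relators, and apply Theorem \ref{t11} to conclude $N_\infty=N_n$ for some $n$. The only cosmetic difference is that you enumerate all of $\ker\pi$ rather than just a set of defining relations, which changes nothing.
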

\begin{proof} 
We quickly repeat the proof in \cite{[ISW]}.
 By a result of Shalom \cite{Shalom}
 any property (T) discrete group $\Gamma$ is a quotient of a finitely presented group $G$ with property (T). 
 Enumerating the  (a priori countably many) relations that determine    $\Gamma$, we can find a sequence $N_n$ and $N_\infty$ as in Theorem \ref{t11} with $G/N_n$ finitely presented
 such that     $\Gamma=G/N_\infty$.  By Theorem \ref{t11},
 if $C^*(\Gamma)$ has the LP then
    $\Gamma=G/N_n$ for some $n$ and hence $\Gamma$ is finitely presented.
       \end{proof}

\begin{rem} Let $\cl U$ be a non-trivial ultrafilter on $\NN$.
If we do not assume the sequence $(N_n)$ nested,
let $N_\infty$ be defined by $t\in N_\infty \Leftrightarrow \lim_\cl U 1_{t\in N_n}=1$.
Then
the same argument shows
that $| q_n(\ker(q_\infty))|<\infty$ for all $n$ ``large enough" along $\cl U$.
\end{rem}

As pointed out in \cite{[ISW]}
there is a continuum of property (T) groups 
that are not finitely presented, namely those considered earlier by Ozawa in \cite{Ozpams}.
By the preceding corollary $C^*(G)$ fails the LP for all those $G$s.

\section{Some uniform estimates derived from \eqref{pty}} 

In this section our goal is to show that the property \eqref{pty}
has surprisingly strong ``local" consequences.
The results are motivated by the construction in \cite{P7}
and our hope to find
a separable $A$  with LLP but failing LP (see Remark \ref{glolo}).

   \begin{dfn}\label{rd2} Let $B,C$ be $C^*$-algebras.
            Let $ \cl E\subset B$ be a self-adjoint subspace   and let $\vp>0$. A linear map ${\psi}: \cl E \to C$ 
            will be called an $\vp$-morphism
            if  \begin{itemize}
             \item[{\rm (i)}] $\|{\psi}\|\le 1+\vp$,
             \item[{\rm (ii)}]
        for any $x,y\in \cl E$ with $xy\in \cl E$ we have
          $\|{\psi}(xy) -{\psi}(x){\psi}(y)\|\le \vp \|x\|\|y\|,$
          \item[{\rm (iii)}]
           for any $x\in \cl E$    we have        $\|{\psi}(x^*) -{\psi}(x)^*\|\le \vp \|x\|$.
           \end{itemize}
           \end{dfn}
           
 \begin{lem}\label{le1} Let $A$ be any unital $C^*$-algebra and let $D$ be another
 $C^*$-algebra. Let $t\in D \otimes A $. \begin{itemize}
 \item[{\rm (i)}] Then for any $\d>0$ 
 there is 
  $\vp>0$  and
  a f.d. operator system $\mathscr D\subset A$ such that $t\in D \otimes \mathscr D$ 
  and for any  $C^*$-algebra $C$ and any $v\in CP_\vp(\mathscr D, C)$ 
  where
  $$CP_\vp(\mathscr D, C)=\{ v: \mathscr D\to  C,  \exists v'\in CP (\mathscr D, C)\text{  with  } 
  \|v'\|\le 1 \text{  and  }  \|v-v'\|\le \vp\}$$
we have
  \begin{equation}  \label{15/8/1}\| (Id_D \otimes v)(t)\|_{\max} \le  (1+\d)    \|t\|_{\max} .\end{equation}
 \item[{\rm (ii)}]
 Assume  $t\in (D \otimes  A)^+_{\max} $.
 Then for any $\d>0$ 
 there is 
  $\vp>0$  and
  a f.d. operator system
 $\mathscr D\subset A$  
 such that $t\in D \otimes \mathscr D$
 and  for any  $C^*$-algebra $C$ and any $v\in CP_\vp(\mathscr D, C)$ we have
  \begin{equation}  \label{15/8} d( (Id_D \otimes v)(t) ,
   (D \otimes_{\max}  C)_+ ) \le \d,
 \end{equation}
 where  the distance $d$ is meant  in $D \otimes_{\max} C$.\\
 \item[{\rm (iii)}]
For any $\d>0$ there is $\vp>0$ and 
             a f.d.  subspace  $\cl E\subset A$ such that  $t\in D \otimes \cl E $ 
           and for any 
           $\vp$-morphism $\psi: \cl E \to C$ ($C$ any other $C^*$-algebra) we have
           $$   \|( Id_D\otimes \psi)(t)   \|_{D\otimes_{\max} C}\le (1+\d) \|t\|_{D\otimes_{\max} A}.$$  \end{itemize}
   \end{lem}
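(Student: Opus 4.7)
For parts (i) and (ii), I would use a single perturbation argument based on Arveson's extension theorem. Fix a decomposition $t=\sum_{j=1}^n d_j\otimes e_j$ with $d_j\in D$ and $e_j\in A$, and let $\mathscr D\subset A$ be any f.d. operator system containing the $e_j$'s, so $t\in D\otimes\mathscr D$. For any $v'\in CP(\mathscr D,C)$ with $\|v'\|\le 1$, Arveson's extension theorem provides a c.p. contraction $\bar v':A\to C$ extending $v'$, and by Remark \ref{rw} the morphism $Id_D\otimes\bar v':D\otimes_{\max}A\to D\otimes_{\max}C$ is c.p. and contractive. Hence $\|(Id_D\otimes v')(t)\|_{\max}\le\|t\|_{\max}$, and when $t\in(D\otimes A)^+_{\max}$ we also have $(Id_D\otimes v')(t)\in(D\otimes_{\max}C)_+$. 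For $v\in CP_\varepsilon(\mathscr D,C)$ with $\|v-v'\|\le\varepsilon$, the trivial estimate $\|\sum d_j\otimes c_j\|_{\max}\le\sum\|d_j\|\|c_j\|$ gives
$$\|(Id_D\otimes(v-v'))(t)\|_{\max}\le\varepsilon\sum_j\|d_j\|\|e_j\|,$$
and the triangle inequality (for (i)) or the distance-to-the-positive-cone measurement (for (ii)) yields \eqref{15/8/1} and \eqref{15/8} once $\varepsilon$ is chosen small enough in terms of $\delta$, $\|t\|_{\max}$, and the fixed decomposition.

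For (iii), the plan is to reduce to (i) by showing that any $\varepsilon$-morphism is close in operator norm to a c.p. contraction on a suitably enriched subspace. Starting from $\mathscr D$ as above, I would take $\mathcal{E}\subset A$ to be a f.d. self-adjoint unital subspace containing $\mathscr D$ together with all products $e_i^*e_j$ for $e_i,e_j$ in a fixed spanning set of $\mathscr D$, and more generally the products $x^*y$ arising in amplifications $M_N(\mathscr D)$ for an $N$ sufficient to detect CPness of maps out of $\mathscr D$ (e.g.\ $N=\dim\mathscr D$). For an $\varepsilon$-morphism $\psi:\mathcal{E}\to C$, combining the approximate multiplicativity and approximate $*$-preservation in Definition \ref{rd2} yields a Schwarz-type estimate
$$\|\psi(x^*x)-\psi(x)^*\psi(x)\|\le O(\varepsilon)\|x\|^2\qquad(x\in\mathcal{E},\,x^*x\in\mathcal{E}).$$
Applied entrywise, this says that the Choi-type matrix $[\psi(e_i^*e_j)]$ is $O(\varepsilon)$-close to the positive matrix $[\psi(e_i)^*\psi(e_j)]$, with analogous bounds at the relevant matrix levels. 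Combined with the Choi-matrix characterization of c.p. maps on f.d. operator systems and a small Hahn--Banach-type perturbation, this produces a c.p. contraction $v':\mathscr D\to C$ with $\|v'-\psi|_{\mathscr D}\|\le O(\varepsilon)$, and invoking (i) with $v=\psi|_{\mathscr D}$ delivers the conclusion.

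The main obstacle will be the CP perturbation step: turning the pointwise Schwarz inequality into a quantitative c.p. approximation of $\psi|_{\mathscr D}$, with constants depending only on $t$ and $\delta$ (not on $\psi$ or on $C$). This is a quantitative strengthening of the well-known qualitative fact that approximately multiplicative self-adjoint contractions on operator systems are close to c.p. maps, and I expect it to be handled by an Arveson extension combined with a truncation/rescaling argument in the spirit of the proof of Lemma \ref{l27/11}. Modulo this step, (iii) follows immediately from the reduction to (i).
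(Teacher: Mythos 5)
Your argument for (i) breaks down at the very first step, and the failure is precisely the point of the lemma. Arveson's extension theorem extends a c.p.\ map $v'\colon \mathscr D\to C$ to a c.p.\ map on all of $A$ only when the target is injective (e.g.\ $B(H)$); for an arbitrary $C^*$-algebra $C$ there is in general \emph{no} c.p.\ extension $\bar v'\colon A\to C$, and replacing $C$ by some $B(H)\supset C$ does not help, because the max tensor product is not injective, so $\|(Id_D\otimes v')(t)\|_{D\otimes_{\max}C}$ can be strictly larger than $\|(Id_D\otimes v')(t)\|_{D\otimes_{\max}B(H)}$. This is exactly the distinction stressed in Remark \ref{rw}: a c.p.\ contraction defined on all of $A$ is automatically $\max$-contractive, but a c.p.\ contraction defined only on a f.d.\ operator system $\mathscr D\subset A$ need not be. Consequently your conclusion --- that \emph{any} f.d.\ operator system $\mathscr D$ containing the legs of a fixed decomposition of $t$ works, with $\varepsilon$ depending only on that decomposition --- is strictly stronger than the statement of Lemma \ref{le1} and is not true; were it true, every c.p.\ contraction on every f.d.\ operator system would be $\max$-contractive and Theorem \ref{t3} and the whole $MB/SB$ apparatus of the paper would be pointless. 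The same objection applies to your positivity claim in (ii), and, since your (iii) is reduced to (i), to (iii) as well (where in addition the quantitative ``approximately multiplicative $\Rightarrow$ close to c.p.'' step, which you acknowledge as the main obstacle, is left unproved).

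What the paper actually does is a compactness argument in which the operator system is forced to grow. After reducing to $A$ separable, fix an increasing exhausting sequence $E_0\subset E_1\subset\cdots$ of f.d.\ operator systems with $t\in D\otimes E_0$, and let $1+\delta_n$ be the supremum of $\|(Id_D\otimes v)(t)\|_{\max}$ over all $C$ and all $v\in CP_{1/n}(E_n,C)$. Choosing near-optimal $v_n\colon E_n\to C_n$ and passing to the quotient $\mathcal L=\ell_\infty(\{C_n\})/c_0(\{C_n\})$, the family $(v_n)$ assembles into a genuine c.p.\ contraction $v\colon A\to\mathcal L$ defined on the \emph{whole} algebra (the $1/n$-defects vanish in the quotient), to which Remark \ref{rw} legitimately applies; unwinding the quotient description $D\otimes_{\max}\mathcal L=(D\otimes_{\max}\ell_\infty(\{C_n\}))/(D\otimes_{\max}c_0(\{C_n\}))$ then yields $\limsup_n\delta_n\le 0$, so $\mathscr D=E_n$ and $\varepsilon=1/n$ work for $n$ large enough. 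Parts (ii) and (iii) are proved by the same scheme. The need to let $\mathscr D$ depend on $\delta$ (and not merely on a decomposition of $t$) is therefore essential, not an artifact of the proof.
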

  \begin{proof} 
  By Remark \ref{rty0}
  we may assume $D$ unital.
  Let $t\in D \otimes A$. Let $E \subset A$ f.d. such that
  $t\in D \otimes E$.
  First note that (see Remark \ref{rty} or Proposition \ref{fr18}) there is a separable $C^*$-subalgebra $B $
  with $E\subset B\subset A$
  such that $\|t\|_{D \otimes_{\max} B}= \|t\|_{D \otimes_{\max} A}$.
  Moreover,
  assuming $\|t\|_{D \otimes_{\max} A} =1$ (for simplicity),  
   if $t\in (D \otimes A)^+_{\max} =(D \otimes A) \cap (D \otimes_{\max} A)_+ $    (which means
    that $t$ is of finite rank with $t=t^*$ and $\|1-t\|_{D \otimes_{\max} A} \le 1$) there is   a $B$
  such that in addition $t\in (D \otimes  B)^+_{\max} $.
  It follows that we may   assume $A$ separable
  without loss of generality. Then let $(E_n)$ be an increasing sequence of f.d. subspaces (or operator systems if we wish) such that $E\subset E_0$ and $A=\ovl{\cup E_n}$.
 \\
    (i) 
  We may assume by homogeneity that
  $\|t\|_{\max}=1 $. 
  We will work with the 
  sequence $(E_n,1/n)$. 
     The idea is that this sequence ``tends" to $(A,0)$.
   For any $n\in \NN$  we define $\d_n$ by
    $$1+  \d_n=\sup \| (Id_D \otimes v)(t)\|_{\max} $$   
    where the sup runs  over all  $C$'s and   $v\in CP_{1/n}(E_n, C)$.  Note that this is finite because the rank of $t$ is so.
For any $n$ there is $C_n$ and $ v_n  \in CP_{1/n}(E_n, C_n)$ such that
  \begin{equation}  \label{15/8/2}\| (Id_D \otimes v_n)(t)\|_{\max} \ge 
   1+\d_n-1/n .\end{equation} 
   Let $L=\ell_\infty(\{C_n\})$ and $\cl I=c_0(\{C_n\})$.
Consider the quotient space $\cl L=L/\cl I$
so that for any family $x=(x_n) \in \ell_\infty(\{C_n\})$
the image of $x$ under the quotient map
$Q: \ell_\infty(\{C_n\}) \to \cl L$ satisfies
$$\|Q(x)\|=\limsup\|x_n\| .$$
Note that by \eqref{25/8} we have
  \begin{equation}  \label{24/9}
  D\otimes_{\max} \cl L =
  [ D\otimes_{\max}   L]/  [ D\otimes_{\max}  \cl I]
  .\end{equation} 
  The net $(v_n) $ defines a  c.p.  map $v$
  from $A$ to  $\cl L$ with $\|v\|  \le 1$.
  It follows (see Remark \ref{rw})
that $\|( Id_D\otimes v)(t) \|_{D \otimes_{\max} \cl L} \le 1$.
By \eqref{25/8} we have
  $$\|( Id_D\otimes v)(t) \|_{D \otimes_{\max} \cl L}
=\|( Id_D\otimes v)(t) \|_{(D \otimes_{\max}   L)/(D \otimes_{\max}   \cl I) }.$$
  We have a natural morphism
   $D \otimes_{\max}   L \to D \otimes_{\max}   C_n$ for each $n$ and hence
    a natural morphism $D \otimes_{\max}   L \to \ell_\infty(\{D \otimes_{\max}   C_n\})$, taking $D \otimes_{\max}   \cl I$ to $c_0(\{D \otimes_{\max}   C_n\})$. This implies
  $$\limsup\nl_n \|  (Id_D\otimes v_n)(t) \|_{D \otimes_{\max}  C_n}\le \|( Id_D\otimes v)(t) \|_{D \otimes_{\max} \cl L} \le 1
   .$$
  Therefore by \eqref{15/8/2}
  $$\limsup\nl_n  \d_n\le 0.$$
  Thus for all $n$ large enough  
  we have $\d_n< \d $. This proves (i).
  \\
  (ii) 
  For any $n$,  let $\d_n$ denote the supremum of the left-hand side
  of \eqref{15/8} over all $C$'s and all $ v  \in CP_{1/n}(E_n, C)$.
  We introduce $v_n$ and 
   $v\in CP (A, \cl L)$ by  proceeding as in the preceding argument. Now
 $( Id_D\otimes v)(t) \in (D \otimes \cl L)^+_{\max}$,
 which, in view of \eqref{24/9},  implies
 $$\limsup\nl  d( ( Id_D\otimes v_n)(t), (D\otimes_{\max} C_n)_+)=0.$$
 We conclude as for part (i).\\
 Lastly, (iii) is proved similarly as (i).
  \end{proof}

   \begin{lem}\label{le2} Let $A$ be any unital $C^*$-algebra 
   satisfying \eqref{pty}.
   Let $E\subset A$ be a
     f.d. subspace.
  Then  for any $\d>0$ 
 there is $\vp>0$  and   a f.d. 
     operator system $\mathscr D\subset A$
     containing $E$  such that
    the following holds.\\
     (i)   For   any  $C^*$-algebras $C,D$ and any $v\in CP_\vp(\mathscr D, C)$ 
     (in particular for any $v\in CP(\mathscr D, C)$ with $\|v\|\le 1$)
we have
  \begin{equation}  \label{17/8/1}\forall t\in D \otimes E\quad 
  \| (Id_D \otimes v)(t)\|_{\max} \le  (1+\d)   \|t\|_{\max} .\end{equation}
  In other words
   \begin{equation}  \label{17/8/2}
   \|v_{|E}\|_{{MB}(E,C)}\le 1+\d.\end{equation}
 (ii) 
For any  $C^*$-algebras $C$, $D$  and any $v\in CP_\vp(\mathscr D, C)$ we have
  \begin{equation}  \label{17/8} 
  \forall t\in  (D \otimes  C)^+_{\max} \ \ 
  d( (Id_D \otimes v)(t) ,  (D \otimes_{\max}  C)_+    )\le \d \|t\|_{\max},
 \end{equation}
 where  the distance $d$ is meant  in $D \otimes_{\max} C$.\\
(iii) For any $C^*$-algebras $C$, $D$  and
           $\vp$-morphism $\psi: \mathscr D \to C$   we have
           $$   \|( Id_D\otimes \psi)(t)   \|_{D\otimes_{\max} C}\le (1+\d) \|t\|_{D\otimes_{\max} A}.$$
   \end{lem}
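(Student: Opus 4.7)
The plan is to derive (i), (ii), (iii) from their counterparts in Lemma \ref{le1} by a diagonal packaging argument: a family of hypothetical bad witnesses, indexed by pairs $P=(\mathscr D_P,\vp_P)$, is assembled into a single element $t=(t_P)\in \ell_\infty(\{D_P\})\otimes E$, and property \eqref{pty} then converts the single-element conclusion of Lemma \ref{le1} into the uniform conclusion required here. Since shrinking $\vp$ and enlarging the f.d.\ operator system $\mathscr D\supset E$ only strengthen each of the three conditions, it suffices to prove (i), (ii), (iii) separately and then take the common refinement.

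For (i), fix $\d>0$ and suppose no pair $(\mathscr D,\vp)$ with $E\subset \mathscr D\subset A$ a f.d.\ operator system and $\vp>0$ satisfies the conclusion. Order such pairs by $P\le P'$ iff $\mathscr D_P\subset \mathscr D_{P'}$ and $\vp_P\ge \vp_{P'}$, and for each $P$ choose bad data $(D_P,C_P,v_P,t_P)$ with $v_P\in CP_{\vp_P}(\mathscr D_P,C_P)$, $t_P\in D_P\otimes E$, $\|t_P\|_{\max}\le 1$, and $\|(Id_{D_P}\otimes v_P)(t_P)\|_{\max}>1+\d$. Set $D=\ell_\infty(\{D_P\})$ and $t=(t_P)\in D\otimes E$; property \eqref{pty} for $A$ gives $\|t\|_{D\otimes_{\max} E}\le 1$. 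Apply Lemma \ref{le1}(i) to this $t$ with tolerance $\d/2$ to produce $\vp'>0$ and a f.d.\ operator system $\mathscr D'\subset A$, which we may enlarge to contain $E$, such that for every $C^*$-algebra $C$ and every $v\in CP_{\vp'}(\mathscr D',C)$ the bound $\|(Id_D\otimes v)(t)\|_{\max}\le (1+\d/2)\|t\|_{\max}$ holds.

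Set $P_0=(\mathscr D',\vp')$. For $P\ge P_0$ we have $\mathscr D'\subset \mathscr D_P$ and $\vp_P\le \vp'$; fix c.p.\ witnesses $v'_P\in CP(\mathscr D_P,C_P)$ with $\|v'_P\|\le 1$ and $\|v_P-v'_P\|\le \vp_P$. Let $C=\ell_\infty(\{C_P\}_{P\ge P_0})$ and define $w,w':\mathscr D'\to C$ by $w(x)=(v_P(x))_P$ and $w'(x)=(v'_P|_{\mathscr D'}(x))_P$. Since a map into an $\ell_\infty$-product is c.p.\ iff each coordinate map is c.p., $w'$ is c.p.\ with $\|w'\|\le 1$; moreover $\|w-w'\|\le \sup_{P\ge P_0}\vp_P\le \vp'$, so $w\in CP_{\vp'}(\mathscr D',C)$. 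Lemma \ref{le1}(i) then yields $\|(Id_D\otimes w)(t)\|_{D\otimes_{\max} C}\le 1+\d/2$. For each $P\ge P_0$, the coordinate $*$-homomorphisms $q_P:D\to D_P$ and $p_P:C\to C_P$ induce a contractive $*$-homomorphism $q_P\otimes p_P:D\otimes_{\max} C\to D_P\otimes_{\max} C_P$ sending $(Id_D\otimes w)(t)$ to $(Id_{D_P}\otimes v_P)(t_P)$; hence $\|(Id_{D_P}\otimes v_P)(t_P)\|_{\max}\le 1+\d/2<1+\d$, the desired contradiction.

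Parts (ii) and (iii) follow by the same construction with minor adaptations. For (ii), use Theorem \ref{L1}(ii)$_+$ to see that $t=(t_P)$ with each $t_P\in (D_P\otimes E)^+_{\max}$ itself lies in $(D\otimes E)^+_{\max}$, apply Lemma \ref{le1}(ii) to $t$ with tolerance $\d/2$, and observe that $q_P\otimes p_P$, being a contractive $*$-homomorphism, sends $(D\otimes_{\max} C)_+$ into $(D_P\otimes_{\max} C_P)_+$, so distances to the positive cone only shrink under the coordinate projection. For (iii), if each $\psi_P:\mathscr D_P\to C_P$ is a $\vp_P$-morphism with $\vp_P\le \vp'$ for $P\ge P_0$, then the assembled map $\psi=(\psi_P)_{P\ge P_0}:\mathscr D'\to \ell_\infty(\{C_P\})$ is a $\vp'$-morphism, since each of the three defining conditions of Definition \ref{rd2} passes coordinatewise with a uniform constant; apply Lemma \ref{le1}(iii) and project. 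The principal subtlety is the bookkeeping that arranges the cofinality of $\{P\ge P_0\}$ to match the $\mathscr D'$ produced by Lemma \ref{le1}; once this is in place, everything reduces to standard checks that c.p.-ness, positivity, and the $\vp$-morphism conditions are stable under restriction to $\ell_\infty$-products and under the coordinate projections $q_P\otimes p_P$.
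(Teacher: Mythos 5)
Your proof is correct and rests on the same mechanism as the paper's: assemble a family of test data into a single element of $\ell_\infty(\{D_i\})\otimes E$, invoke \eqref{pty} to bound its max-norm, apply Lemma \ref{le1} to that single assembled element, and project back onto coordinates via the contractive $*$-homomorphisms $\ell_\infty(\{D_i\})\otimes_{\max}C\to D_i\otimes_{\max}C$. The only difference is organizational: the paper packages all normalized tensors $t_i\in B_{D_i\otimes_{\max}E}$ at once and argues directly, whereas you package hypothetical counterexamples indexed by the directed set of candidate pairs $(\mathscr D,\vp)$ and argue by contradiction; your extra $\ell_\infty(\{C_P\})$ assembly of the maps $v_P$ is harmless but unnecessary, since the conclusion of Lemma \ref{le1} is already quantified over all target algebras $C$ and all $v\in CP_{\vp'}(\mathscr D',C)$, so a single $P\ge P_0$ already yields the contradiction.
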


    \begin{proof} It is easy to see (by Remark \ref{rty} or Proposition \ref{fr18})
    that we may restrict to separable $D$'s.
    We then assemble  the set $I$ that is the disjoint union
    of the unit balls of $D\otimes _{\max} E$, when $D$ is an arbitrary
    separable $C^*$-algebra (say viewed as quotient of $\C$).
 Let $t_i\in D_i\otimes E$ be the element corresponding to $i\in I$.
 By \eqref{pty'} we know that 
 $
 \|t\|_{\ell_\infty(\{D_i\}) \otimes_{\max} A} \le 1,$
 and hence $
 \|t\|_{\ell_\infty(\{D_i\}) \otimes_{\max} E} \le 1$.
 Let $D=\ell_\infty(\{D_i\}) $ so that $t\in D \otimes E$.
 Let $\vp>0$ and  $\mathscr D\supset  E$ be associated to  $t$
 as in Lemma \ref{le1}. 
 For any $v\in CP_\vp(\mathscr D,C)$
 we have 
 $$\| (Id_D \otimes v)(t)\|_{\max} \le  (1+\d)    \|t\|_{\max} \le 1+\d.$$
 A fortiori, using the canonical morphisms $D\to D_i$
 $$\sup\nl_{i\in I} \| (Id_{D_i} \otimes v)(t_i)\|_{\max} \le 1+\d,$$
 equivalently we conclude $\|v\|_{{MB}(E,C)}\le 1+\d$.
 \\
 (ii) and (iii) are proved similarly. A priori they lead to distinct
 $\mathscr D$'s but since   replacing $\mathscr D$ by a larger f.d. one
 preserves the 3 properties, we may obtain all 3 for a common $\mathscr D$.
  \end{proof}

 \begin{rem} In the converse direction, assume that
   for any $E$ and $\d>0$  there is $\mathscr D \supset E$
 such that, for any $C$, any $v\in CP(\mathscr D, C)$ satisfies
 \eqref{17/8/2}. If $A$ (separable unital for simplicity) has the LLP then $A$ satisfies \eqref{pty}. 
 Indeed, assuming  the LLP any quotient morphism $q: \C \to A$
 will admit unital c.p. local liftings on arbitrarily large f.d. operator systems
 $\mathscr D$ in $A$. By \eqref{17/8/2}, we have local lifings
 $u^E: E \to \C$ with $\|u^E\|_{mb} \le 1+\d$ on arbitrarily large f.d. subspaces $E$ in $A$.
 From the latter it is now easy to transplant \eqref{pty}
 from $\C$ to $A$ (as in the proof of (i) $\Rightarrow$ (ii) in Theorem
 \ref{L1}). \\
 Thus the property described in part (i) of Lemma \ref{le2}
 can be interpreted as describing what is missing in the LLP
 to get the LP.
 \end{rem}
 
 Our last result is a reformulation of the equivalence of the LP
 and the property in \eqref{pty}. 
 Under this light, the LP appears as a   very strong property.
 
    \begin{thm}\label{L4} Let $A$ be a   $C^*$-algebra.
  The following are equivalent:\begin{itemize}
 \item[{\rm (i)}] The algebra  $A$  satisfies \eqref{pty} (i.e. it has the lifting property LP).
  \item[{\rm (ii)}]
 For any f.d. $E\subset A$ there is  $P=P_{E}\in \C \otimes E$ 
  with  $\|P\|_{\C \otimes_{\max} A} \le 1$
  such that
  for any unital separable $C^*$-algebra $D$ 
  and any $t\in B_{D \otimes_{\max} E}$
  there is a unital 
  $*$-homomorphism\footnote{here, actually, we could use instead
  of a $*$-homomorphism a u.c.p. map  or a $q$ with $\|q\|_{mb}\le 1$ }
  $q^D: \C \to D$ such that
  $$ [q^D\otimes Id_A](P) =t.$$
    \item[{\rm (ii)'}] For any f.d. $E\subset A$ there is a unital separable $C^*$-algebra $C$ 
 and  $P=P_{E}\in C \otimes E$ 
  with  $\|P\|_{C \otimes_{\max} A} \le 1$
  such that
such that the same as (ii) holds with $C$ in place of $\C$. 
   \item[{\rm (iii)}]
  For any f.d. $E\subset A$ and $\vp>0$
  there is  
    $P=P_{E}\in \C \otimes E$ with  $\|P\|_{\C \otimes_{\max} A} \le 1$
   such that for any unital separable $C^*$-algebra $D$
   and any $t\in B_{D \otimes_{\max} E}$
     there
   is a unital $*$-homomorphism 
  $q^D: \C \to D$ such that
  $$ \| [q^D\otimes Id_A](P) -t \|_{\max}<\vp.$$
   \item[{\rm (iii)'}] For any f.d. $E\subset A$ and $\vp>0$
  there is a unital separable $C^*$-algebra $C$
  and $P \in C \otimes E$ such that the same as (iii) holds with $C$ in place of $\C$.
  \item[{\rm (iv)}] Same as (iii)' restricted to $D=\C$.
  \end{itemize}
\end{thm}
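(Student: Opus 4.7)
Proof plan. I will close the cycle by combining the trivial directions (ii) $\Rightarrow$ (ii)', (ii) $\Rightarrow$ (iii), (ii)' $\Rightarrow$ (iii)', (iii) $\Rightarrow$ (iii)' and (iii)' $\Rightarrow$ (iv) (the first three by taking $C=\C$; the fourth because an exact realization gives an $\vp$-approximation for any $\vp$; the last by restricting $D$ to $\C$) with the two substantive implications (i) $\Rightarrow$ (ii) and (iv) $\Rightarrow$ (i).

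Construction of the universal element, (i) $\Rightarrow$ (ii). Fix a f.d.\ subspace $E\subset A$ with basis $e_1,\dots,e_n$. Take the index set $J:=B_{\C\otimes_{\max}E}$, set $D_j:=\C$ for every $j\in J$, and define $T\in\ell_\infty(\{D_j\})\otimes E$ by $T_j:=j$. Since $\sup_j\|T_j\|_{\C\otimes_{\max}A}\le 1$, \eqref{pty'} (which holds by (i)) yields $\|T\|_{\ell_\infty(\{D_j\})\otimes_{\max}A}\le 1$. Writing $T=\sum_{k=1}^n T^k\otimes e_k$ with $T^k\in\ell_\infty(\{D_j\})$, apply Proposition \ref{fr18} to the separable subspace $\operatorname{span}\{1,T^1,\dots,T^n\}$ of $\ell_\infty(\{D_j\})$ to produce a separable unital $C^*$-subalgebra $A_s$ whose inclusion into $\ell_\infty(\{D_j\})$ is $\max$-isometric and with $T\in A_s\otimes E$, so that $\|T\|_{A_s\otimes_{\max}A}\le 1$. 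Realize $A_s=\C/\cl J$ via a quotient $\pi:\C\to A_s$; using the quotient lifting of the unit ball stated just after \eqref{7/9/1}, pick $P\in\C\otimes E$ with $\|P\|_{\C\otimes_{\max}A}\le 1$ and $(\pi\otimes Id_E)(P)=T$. For each $j\in J$ the unital $*$-homomorphism $q^j:=p_j\circ\iota\circ\pi:\C\to\C$ (with $\iota:A_s\hookrightarrow\ell_\infty(\{D_j\})$ the inclusion and $p_j$ the $j$-th coordinate projection) satisfies $(q^j\otimes Id)(P)=j$, which is (ii) in the case $D=\C$. For an arbitrary unital separable $D$ and $t\in B_{D\otimes_{\max}E}$, write $D=\C/\cl K$, lift $t$ to $s\in B_{\C\otimes_{\max}E}$ via \eqref{7/9/1}, take $q^s$ as above, and compose with the quotient $\C\twoheadrightarrow D$.

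Geometric iteration, (iv) $\Rightarrow$ (i). By Proposition \ref{rty1} it suffices to verify \eqref{pty'} for families with every $D_i=\C$; fix $t=(t_i)\in\ell_\infty(\{\C\})\otimes E$ with $\sup_i\|t_i\|_{\max}\le 1$. Choose $\vp\in(0,1)$ and let $C$ and $P=P_{E,\vp}\in B_{C\otimes_{\max}E}$ be as supplied by (iv). Set $t^{(0)}:=t$; inductively, assuming $\sup_i\|t^{(k)}_i\|_{\max}\le\vp^k$, rescale to $t^{(k)}_i/\vp^k\in B_{\C\otimes_{\max}E}$, invoke (iv) to obtain unital $*$-homomorphisms $q^{(k)}_i:C\to\C$ with $\|(q^{(k)}_i\otimes Id)(P)-t^{(k)}_i/\vp^k\|_{\max}<\vp$, and define $t^{(k+1)}_i:=t^{(k)}_i-\vp^k(q^{(k)}_i\otimes Id)(P)$; this preserves the invariant $\sup_i\|t^{(k+1)}_i\|\le\vp^{k+1}$. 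Setting $Q^{(k)}:=(q^{(k)}_i)_i:C\to\ell_\infty(\{\C\})$ (a unital $*$-homomorphism, since a product of such is one), the series
\[
S:=\sum_{k\ge 0}\vp^k(Q^{(k)}\otimes Id_A)(P)
\]
converges in $\ell_\infty(\{\C\})\otimes_{\max}A$ with $\|S\|\le\sum_{k\ge 0}\vp^k=(1-\vp)^{-1}$, and lies in the closed subspace $\ell_\infty(\{\C\})\otimes E$. By telescoping, the $i$-th coordinate of $S$ in $\C\otimes E$ equals $t_i$ for every $i$; and because $E$ is finite dimensional, the coordinate projections $p_i\otimes Id$ separate points of $\ell_\infty\otimes E\simeq\ell_\infty^n$. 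Hence $S=t$, so $\|t\|_{\ell_\infty(\{\C\})\otimes_{\max}A}\le(1-\vp)^{-1}$; letting $\vp\to 0^+$ gives $\|t\|\le 1$, which is \eqref{pty'}.

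Main obstacle. The delicate step is (iv) $\Rightarrow$ (i): a single naive application of (iv) yields only $\|t\|_{\ell_\infty\otimes_{\max}A}\le 1+\|r\|_{\ell_\infty\otimes_{\max}A}$, where $r$ is the error family, and controlling $\|r\|_{\ell_\infty\otimes_{\max}A}$ by $\sup_i\|r_i\|_{\max}$ is precisely \eqref{pty'} and therefore cannot be invoked. The geometric iteration dodges this circularity by assembling the approximations into a telescoping series whose total norm in $\ell_\infty\otimes_{\max}A$ is controlled \emph{a priori} by $\sum\vp^k$; the identification $S=t$ then rests only on the elementary observation that coordinate projections are faithful on the algebraic tensor product $\ell_\infty\otimes E$ when $E$ is finite dimensional.
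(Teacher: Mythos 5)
Your proposal is correct, and the overall skeleton (a cycle of trivial implications plus the two substantive ones) matches the paper's, but your treatment of the two substantive implications differs in instructive ways. For (i) $\Rightarrow$ (ii) the paper indexes the universal family over \emph{all} separable $D$'s (realized as quotients of $\C$) and extracts a separable piece by locating a copy of $\F_\infty$ inside a large free group, whereas you index only over $B_{\C\otimes_{\max}E}$, cut down with Proposition \ref{fr18}, and recover general $D$ at the end by lifting $t$ through $D=\C/\cl K$ via the unit-ball statement after \eqref{7/9/1}; both are sound and yours is arguably tidier. The real divergence is in the converse: the paper proves (iii) $\Rightarrow$ (i) by a \emph{single} application of the approximation, controlling the error family in the $\ell_\infty\otimes_{\max}$-norm by $c\dim(E)\vp$ via the basis estimate \eqref{20/9}, while you prove (iv) $\Rightarrow$ (i) directly by a geometric iteration, assembling a telescoping series with a priori norm $(1-\vp)^{-1}$ and identifying its sum with $t$ through the coordinate projections. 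Your route avoids the constant $c$ in the final norm bound and works straight from the weakest hypothesis (iv) without passing through (iii)' and (iii), at the cost of needing (and you should state explicitly) that $\ell_\infty(\{\C\})\otimes E$ is \emph{closed} in $\ell_\infty(\{\C\})\otimes_{\max}A$ and that the coordinate projections separate its points -- but both facts follow from exactly the same slice-map estimate \eqref{20/9} that the paper uses, so no new tool is required. One cosmetic remark: in your list of trivial implications the justifications are slightly mismatched ((ii) $\Rightarrow$ (iii) and (ii)' $\Rightarrow$ (iii)' are ``exact implies approximate,'' not ``take $C=\C$''), but each listed implication is indeed immediate and the resulting implication graph is strongly connected, so the equivalence is fully established.
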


 \begin{proof} Assume (i). 
 We view the set of all possible $D$'s as the set of quotients of $\C$.
 Let $I$ be the disjoint union of the
 sets $B_{D \otimes_{\max} E}$.
 Let $t= (t_i)_{i\in I}$ be the family of all possible $t\in B_{D \otimes_{\max} E}$. By \eqref{pty'} we have
 $$\|t\|_{\ell_\infty(\{D_i\}) \otimes_{\max} E }\le 1.$$
 Let $q: C^*(\F) \to \ell_\infty(\{D_i\})$ be a surjective $*$-homomorphism.
 Let $t' \in C^*(\F) \otimes E $  be a lifting of $t$ such that
 $\| t'\|_{ C^*(\F) \otimes_{\max} E} \le 1 $.
 There is a copy of $\F_\infty$, say $\F' \subset \F$
 such that $t'\in C^*(\F') \otimes E$. 
 Then $C^*(\F') \subset C^*(\F)$. Let $C=C^*(\F')$.
 We can take for $q^D$ the restriction of $q_i$ to $C^*(\F') \simeq \C$,
 and we set  $P= t'$ viewed as sitting in $\C \otimes E$.
This gives us (i) $\Rightarrow$ (ii) and (ii) $\Rightarrow$ (ii)' is trivial.
\\
 (ii) $\Rightarrow$ (iii) and  (iii) $\Rightarrow$  (iii)'  $\Rightarrow$(iv) are trivial.
 \\
Assume (iii)'.  Let $Q: \C \to C$  be a quotient unital morphism.
Let $P\in  C \otimes E$ be as in (iii)'. Let $P'\in \C \otimes E$
with $\|P'\|_{\max} \le 1$ be such that $[Q\otimes Id_E](P') =P$
(this exists by the assertion following \eqref{7/9/1}).
Then $P'$ has the property required  in (iii).
Thus (iii)' $\Rightarrow$ (iii). Similarly  (ii)' $\Rightarrow$ (ii). 
\\
The implication
(iv) $\Rightarrow$ (iii)' is easy to check similarly using the fact that any
$D$ is a quotient of $\C$. We skip the details.
It remains to show (iii) $\Rightarrow$ (i).

Assume (iii).  We will show \eqref{pty'}.
We need a preliminary elementary observation.
Let $(e_j)$ be a normalized algebraic basis of $E$.
Then it is easy to see that there is a constant $c$ (depending on $E$ and $(e_j)$) such that for any $D$ and any $d_j,d_j'\in D$ we have
 \begin{equation}\label{20/9}
 \sup\nl_j\|d_j-d_j'\| \le c \|\sum (d_j-d_j') \otimes e_j \|_{D \otimes_{\max} E}.\end{equation}
Now let $t_i\in B_{D_i \otimes_{\max} E}$. Let $P\in B_{\C \otimes_{\max}  E}$
be as in (iii).
For some quotient morphism $q^i: \C \to D_i$ we have
$$\|t_i-[q^i\otimes Id_A](P)\|_{ \C  \otimes_{\max} E} \le \vp .$$
Let $P_i=P$ and $\C_i=\C$ for all $i\in I$.
Clearly (since $P\mapsto (P_i)$ is a $*$-homomorphism on $\C \otimes A$)
$$\|(P_i)\|_{\ell_\infty(\{\C_i\})  \otimes_{\max} E} \le \|P \|_{\C  \otimes_{\max} E}
\le 1.$$
Note $(q^i): \ell_\infty(\{\C_i\}) \to \ell_\infty(\{D_i\})$
is a $*$-homomorphism.
Let $(t'_i)= [q^i\otimes Id_A](P_i)$.
We have $$\|(t'_i)\|_{\ell_\infty(\{D_i\})  \otimes_{\max} E} \le 1
\text{  and }\sup\nl_i\|  t_i-t'_i\|_{ \C  \otimes_{\max} E}\le \vp ,$$
By   the triangle inequality and \eqref{20/9}  we have
$\| (t_i)-(t'_i)\|_{\ell_\infty(\{D_i\})  \otimes_{\max} E} \le c\dim(E) \vp$. 
Therefore
$\| (t_i) \|_{\ell_\infty(\{D_i\})  \otimes_{\max} E} \le 1+c\dim(E) \vp$.
Since $\vp>0$ is arbitrary for any fixed $E$, this means that  \eqref{pty'} holds whence
(i) (and the LP   by Theorem \ref{L1}). 
 \end{proof}
 In passing we note :
  \begin{cor} For the property in \eqref{pty}
  it suffices to check the case when $I=\N$   (and $D_i=\C$ for all $i\in I$).
  \end{cor}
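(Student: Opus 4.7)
The plan is to exploit Theorem \ref{L4}, which reduces \eqref{pty} for $A$ to its equivalent form (iv): the existence, for each finite-dimensional $E\subset A$ and each $\vp>0$, of a single separable unital $C^*$-algebra $C$ and a ``universal'' element $P\in C\otimes E$ of max-norm at most $1$, whose images $(q\otimes Id_A)(P)$ under unital $*$-homomorphisms $q:C\to\C$ are $\vp$-dense in $B_{\C\otimes_{\max}E}$. Thus it suffices to show that the hypothesis -- that \eqref{pty'} holds for the single countable family $I=\NN$ with $D_i=\C$ -- already produces such a pair $(C,P)$.

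Fix a finite-dimensional $E\subset A$ and $\vp>0$. Since $\C$ is separable and $E$ is finite-dimensional, $\C\otimes_{\max}E$ is separable; choose a sequence $(t_n)_{n\in\NN}$ in the algebraic tensor product $\C\otimes E$ that is dense in $B_{\C\otimes_{\max}E}$. Assembling it coordinatewise yields $t=(t_n)\in\ell_\infty(\{\C_n\})\otimes E$, with $\C_n=\C$, and the assumed restricted form of \eqref{pty'} gives
$$\|t\|_{\ell_\infty(\{\C_n\})\otimes_{\max}A}\ \le\ \sup_n\|t_n\|_{\C\otimes_{\max}A}\ \le\ 1.$$
By Proposition \ref{fr18} applied inside $\ell_\infty(\{\C_n\})$, there is a separable \emph{unital} $C^*$-subalgebra $C\subset\ell_\infty(\{\C_n\})$ containing the unit and the (finitely many) coefficients of $t$ relative to a basis of $E$, such that the inclusion $C\hookrightarrow\ell_\infty(\{\C_n\})$ is $\max$-isometric. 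Setting $P:=t\in C\otimes E$, we then have $\|P\|_{C\otimes_{\max}A}=\|t\|_{\ell_\infty(\{\C_n\})\otimes_{\max}A}\le 1$.

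For the universal property, given any $t_0\in B_{\C\otimes_{\max}E}$ pick $n$ with $\|t_n-t_0\|_{\max}<\vp$; the $n$-th coordinate projection $p_n:\ell_\infty(\{\C_n\})\to\C$ is a unital $*$-homomorphism, and its restriction $q:=p_n|_C:C\to\C$ is again a unital $*$-homomorphism satisfying $(q\otimes Id_A)(P)=(p_n\otimes Id_A)(t)=t_n$, whence
$$\|(q\otimes Id_A)(P)-t_0\|_{\max}<\vp.$$
This is precisely the statement of (iii)' restricted to $D=\C$, i.e.\ condition (iv) of Theorem \ref{L4}; by that theorem $A$ satisfies \eqref{pty}, as desired.

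The only delicate point is the use of Proposition \ref{fr18} to compress the non-separable algebra $\ell_\infty(\{\C_n\})$ down to a separable $\max$-isometric subalgebra $C$, which is what allows the universal $P$ to live in a separable $C^*$-algebra, as required by (iv). Everything else is bookkeeping: the density of $(t_n)$ in $B_{\C\otimes_{\max}E}$ -- available because $\C\otimes_{\max}E$ is separable -- and the fact that coordinate projections on $\ell_\infty$ are unital $*$-homomorphisms.
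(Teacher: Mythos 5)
Your proof is correct and follows essentially the same route as the paper: apply the $I=\NN$, $D_i=\C$ hypothesis to a countable dense subset of $B_{\C \otimes_{\max} E}$ and then invoke the equivalences of Theorem \ref{L4}. The only (harmless) variation is that you place $P$ in a separable $\max$-isometric subalgebra of $\ell_\infty(\{\C_n\})$ via Proposition \ref{fr18}, landing in condition (iv), whereas the paper reruns the proof of (i) $\Rightarrow$ (ii) of Theorem \ref{L4} with the countable index set, lifting $t$ to $C^*(\F)$ and cutting down to a copy of $\C$ so as to land in condition (iii).
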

   \begin{proof} We already observed that $D_i=\C$ for all $i\in I$ suffices (see Proposition \ref{rty1}).
   Assume that \eqref{pty'} holds for $I=\N$ and $D_i=\C$.
   In the proof of (i) $\Rightarrow$ (ii) in Theorem \ref{L4} we may use for the set $I$ the 
   disjoint union of dense sequences
   in $B_{D_k \otimes_{\max} E}$ with $k\in \N$ and $D_k=\C$. Then $I$ is countable
   and with the latter $I$ the proof of (i) $\Rightarrow$ (ii) in Theorem \ref{L4}
   gives us (iii) in Theorem \ref{L4}.
 By (iii) $\Rightarrow$ (i) in Theorem \ref{L4}, the corollary follows.
   \end{proof}
 \begin{rem}
 Let $\cl P$ denote the (non-closed) $*$-algebra generated by the free unitary generators of $\C$.
 Let $P\in \C \otimes E$ be as in (ii) in Theorem \ref{L4}.
 Note for any $\vp >0$ there is $Q\in \cl P \otimes E$
 with $\| Q-P\|_{\wedge} <\vp$ (where $\|\ \|_{\wedge} $ denotes the projective norm), and hence
   also
 $\|q^D(Q)-t\|_\wedge <\vp$.
 After a suitable normalization we may
  also assume $\|Q\| _{\max} < 1$.
 Then $Q$ has a factorization
 in products in $\cup M_n( \C) $ and $\cup M_n( A) $
 and a further Blecher-Paulsen type factorization 
 in the style described in \cite[\S 26]{P4}. Thus we find a \emph{common}
 pattern in the approximate factorization
 of \emph{all} the elements of  $B_{D \otimes_{\max} E}$.
 
 The simplest illustration of this phenomenon is the case when
 $A=\C$ and $E$ is the span of the unitary generators $(U_i)$.
 In that case we have   $t=\sum d_i \otimes U_i\in B_{D \otimes_{\max} E}$ if and only if
 for any $\vp>0$ there is a factorization of the form
 $d_i=a_i b^*_i$ with $a_i,b_i\in D$ such that $\|\sum a_i a_i^*\|^{1/2}=\|\sum b_ib_i^*\|^{1/2}<1+\vp$ (see \cite[p. 130-131]{P6}).
 Equivalently, this holds if and only if
 for any $\vp>0$ there  are $a_i,b_i\in D$
 such that $\|\sum a_i a_i^*\|^{1/2}=\|\sum b_ib_i^*\|^{1/2} =1$ 
and 
 $\sum\nl_i\|d_i-a_i b^*_i\|<\vp$. 
 Let us assume that the latter holds.
  Consider first the column operator space $C_n$, then its universal
  $C^*$-algebra $C^*_u< C_n>$, and lastly
   the free product $C=C^*_u< C_n>  \ast \ C^*_u< C_n>$. Let $e_{i 1}^{(1)}\in C$ and
  $e_{i 1}^{(2)}\in C$ denote the  natural basis of the column space $C_n$ of  each free factor.
  Let $P_E=\sum e_{i 1}^{(1)}e_{i 1}^{(2)^*} \otimes  U_i\in C\otimes E$.
  By our assumption on $(d_i)$, there is a $*$-homomorphism
  $q^D: C \otimes E$ such that $q^D(e_{i 1}^{(1)})=a_i$
  and $q^D(e_{i 1}^{(2)})=b_i$ and hence
  $\|q^D(P_E)-\sum d_i \otimes U_i\|\le \vp$.
  This illustrates the property (iii)' from Theorem \ref{L4}.
  Note in passing that since the column space
  $C_n$ has the OLP, the free product 
  $C=C^*_u< C_n>  \ast \ C^*_u< C_n>$ has the LP.
  The latter algebra  can be substituted with $\C$ in many questions involving tensor products.
 \\
 Using the preceding remark, one can give an independent proof 
 of Lemma \ref{le2}.
  \end{rem}

    \medskip
    
    \n\textbf{Acknowledgement.} I am grateful to Jean Roydor
   and Mikael de la Salle  for stimulating and useful comments.

\end{document}